\documentclass{article}

\usepackage{graphicx}
\usepackage{textcomp}
\usepackage{amssymb}
\usepackage{amsmath}
\usepackage{amsthm}
\usepackage{enumitem}
\usepackage{caption}
\usepackage{subcaption}
\usepackage{tikz}
\usepackage{nicematrix}
\usepackage{multirow}
\usepackage{mathtools}
\usepackage{anysize}
	\marginsize{2.5cm}{2.5cm}{1.5cm}{2cm}

\usepackage[utf8]{inputenc}
\usepackage[british]{babel}
\usepackage[T1]{fontenc}

\usepackage[hyperindex]{hyperref}

\newtheorem{defi}{\textbf{Definition}}[section]
\newtheorem{prop}[defi]{\textbf{Proposition}}
\newtheorem{thm}[defi]{\textbf{Theorem}}
\newtheorem{lem}[defi]{\textbf{Lemme}}

\theoremstyle{definition}
\newtheorem{expl}[defi]{\textbf{Example}}

\newtheorem{rmk}[defi]{\textbf{Remark}}

\newcommand{\sign}{\text{sign}}
\newcommand{\diag}{\text{diag}}

\newcommand{\opnorm}[1]{\left\lvert\hspace{-1 pt}\left\lvert\hspace{-1 pt}\left\lvert#1\right\lvert\hspace{-1 pt}\right\lvert\hspace{-1 pt}\right\lvert}
\newcommand{\Lrm}{\mathrm{L}}

\newcommand{\Wrm}{\mathrm{W}}

\def\keywords{\textbf{Key words. }}
\def\MSC{\textbf{2020 Mathematics Subject Classifications. }}
\def\abstractcmd{\textbf{Abstract. }}

\counterwithin{figure}{section}
\numberwithin{equation}{section}
\setcounter{tocdepth}{2}

\usepackage{csquotes}
\usepackage[style=alphabetic,sorting=nyt,date=year,url=false,maxalphanames=5,maxnames=5,giveninits=true,isbn=false,eprint=false,backref=true]{biblatex}
\ExecuteBibliographyOptions[unpublished]{url=true}
\addbibresource{biblio.bib}

\title{\textbf{A Becker-Döring model with injection and irreversible fragmentation}}
\author{\textsc{Simon LOIN}\thanks{Université de Picardie Jules Verne, LAMFA, CNRS UMR 7352, 33 rue Saint-Leu, 80039, Amiens, France. E-mail: simon.loin@u-picardie.fr}}
\date{}

\begin{document}

\maketitle

\abstractcmd{We introduce and analyse a variant of the Becker-Döring equations that models the growth of clusters through the gain or loss of monomers. Motivated 
by enzymatic reactions in biology, this model incorporates irreversible fragmentation and monomers injection. We establish the 
well-posedness of the equations under suitable conditions on the kinetic rates. Then, as in the Becker-Döring equations, 
we distinguish two cases for the long time behaviour of our solution, however the distinction is made from the constant rate injection 
of monomers. While under strong fragmentation rate the system may exhibit infinite steady-states, we prove for low injection rate and 
moderate fragmentation the solution converges locally exponentially fast to the steady-state. Finally, we present 
an efficient scheme that preserves the asymptotic and allows fast computation by sub-sampling the clusters.
} \newline

\keywords{Existence, Uniqueness, Long-time behaviour, Becker-Döring equation, Irreversible fragmentation, Injection} \newline

\MSC{34A34; 34D20; 92-10.}

\tableofcontents

\section{Introduction}

In their article \cite{becker_doring_1935}, Becker and Döring provided one of the earliest descriptions of particle growth in the theory of nucleation 
from supersaturated vapour, which subsequently gave the name to the model. This model describes the growth and decay of clusters, consisting 
of identical monomers, only by the addition and removal of monomers. The modern formulation of the equations seems to go back to Burton 
\cite{burton_nucleation_1977} to study condensations phenomena at different pressures, and was popularized among mathematicians by Penrose 
and Lebowitz \cite{penrose_lebowitz_1979}. Since then, the model has been applied in a variety of fields, including but not limited 
to physics, chemistry and biology. Initial mathematical results were proved by Ball, Carr and Penrose \cite{ball_becker-doring_nodate}, 
\cite{ball_carr_asymptotic_1988}, then the mathematical aspect of these equations have been studied in detail. Well-posedness and 
many aspects of the long time behaviour of solutions are understood \cite{jabin_rate_2003}, 
\cite{canizo_exponential_2013}, \cite{canizo_uniform_2019} and many other works, as well as the emergence of phase transition 
\cite{PenroseMetastable}. However, there are still open questions, see e.g. \cite{hingant_yvinec_2017} and \cite{wattis_introduction_2006}.

We also point out the fact that the Becker-Döring equations are a specific case of general discrete coagulation-fragmentation equations. 
Therefore classical results, mainly on the well-posedness, can be applied. First mathematical results were proved by Ball and Carr \cite{ball_discrete_1990},
Carr \cite{carr_asymptotic_1992}, Carr and da Costa \cite{carr_asymptotic_1994} and da Costa \cite{da_costa_existence_1995} on the 
well-posedness and the asymptotic behaviour of solutions. Another proof of existence was found by Laurençot \cite{laurencot_discrete_2002} 
extending existence result of Carr and da Costa \cite{carr_asymptotic_1994}. \newline

In this work, we study a variant of Becker-Döring equations.
We have in mind specific applications to biology with polymerisation of biomolecules such as fibrin clots formation. It is thus natural 
to consider \emph{open systems} in which monomers arise from a reaction cascades (e.g. fibrinogen conversion to fibrin protein) and a 
fragmentation process undergoes an irreversible process such as fibrin digestion by plasmin \cite{longstaff_basic_2015}.
In our version of Becker-Döring equations, we consider a model where injection of monomers is constant which may represents 
synthesis of fibrin monomer; and the detached monomer from fragmentation is not able to go through the coagulation 
process again. The reactions taking place are given by
\[\left\{
    \begin{aligned}
        (1) + (i) &\xrightarrow[]{a_{i}} (i+1), ~i\geq 1, \\
        (i) &\xrightarrow[]{b_i} (i-1), ~i\geq 3, \\
        (2) &\xrightarrow[]{b_2} \emptyset, \\
        \emptyset &\xrightarrow[]{\lambda} (1),
    \end{aligned}
    \right.
\]
where $(i)$ represents the concentration of clusters of size $i$. The non-negative numbers $a_i$ and $b_i$ denote respectively the 
coagulation and fragmentation coefficients. It is important to note that the $(2) \xrightarrow[]{b_2} \emptyset$ reaction is the 
result of a specific choice that was made, namely that a 2-polymer should disappear and not form a single monomer. The degradation 
of the 2-polymer is presumed to occur in a manner that both monomers are degraded. We believe that with slight adaptations our 
results holds with considering the reaction $(2) \xrightarrow[]{b_2} (1)$.

The infinite system of ordinary differential equations associates to the reaction scheme is: 
\begin{equation}
    \left\{
    \begin{aligned}
    \frac{d}{dt}C_1(t) &= \lambda - \sum_{j=1}^{+\infty}a_{j}C_1(t)C_j(t) - a_1C_1(t)^2, &  \\ 
    \frac{d}{dt}C_i(t) &= J_{i-1}(t) - J_i(t),  & i\geq 2,
    \end{aligned}
    \right.
    \label{systeme equations BD-Depoly}
\end{equation}
where, for all $i\geq 1$,
\begin{equation}
    J_i(t) = a_iC_1(t)C_i(t) - b_{i+1}C_{i+1}(t).
    \label{flux J BD-Depoly}
\end{equation}
The unknowns are the functions $C = (C_i(t))_{i\geq 1}$ which depend on time $t\geq 0$ and where, for each $i\in\mathbb{N}^\ast$, 
$C_i(t)$ denotes the concentration of $i$-particles clusters per unit of volume at time $t$. The quantity $J_i$ does not  
represent the rate of change of a reversible reaction as in the Becker-Döring equations, but for convenience we keep this notation. \newline

Some work already exists on models close to Becker-Döring and close to ours. The first modification was to add monomers injection, 
it appears in submonolayer epitaxial growth \cite{bales_dynamics_1994}, \cite{amar_kinetics_1996}, 
in nucleation theory for thin film growth \cite{ratsch_nucleation_2003}.
The first mathematical work was done by Blackman and Marshall \cite{blackman_coagulation_1994}, they discussed scaling behaviour 
and growth exponent for a constant rate source; then by Wattis \cite{wattis_similarity_2004} where he studied self-similar 
behaviour of solutions for a time-dependent monomer input and size-independent rate coefficients. He has already shown that with a slight 
modification, the long time behaviour of solutions may become complex and depend strongly on the production rate. 
We also mention the works of da Costa and al. \cite{da_costa_long-time_2006}, \cite{da_costa_convergence_2007} and \cite{da_costa_rates_2016}
which, with constant rate coefficients, complement some of the formal results in \cite{blackman_coagulation_1994} and \cite{wattis_similarity_2004}.
Recently Niethammer and al. \cite{niethammer_oscillations_2022} introduce a depletion term representing clusters removal. As our 
model the chemical reaction network is open, it then becomes unclear whether long-time convergence towards steady-state holds. In their 
case, they provide evidence for the persistence of oscillations in time.
We also mention the work of Bolton and Wattis \cite{bolton_beckerdoring_2004}, in which they considered a model with injection, 
competition and inhibition.

There exists other modifications of the Becker-Döring equations but differs to ours, for example the work of Doumic and 
al. \cite{doumic_mezache_bi-monomeric_2019} introduce a bi-monomeric model explaining oscillations. We also mention the work of Wattis 
\cite{wattis_modified_2009} in which the fragmentation rates depends on the total number of clusters present in the system; and 
the works of Laurençot and Wrzosek \cite{laurencot_becker-doring_1998_1} and \cite{laurencot_becker-doring_1998_2} in which they 
added space diffusion. 

Some literature exists on continuous models with mass loss, more specifically on coagulation and fragmentation equations with discrete 
and continuous mass loss. The first mathematical work was done by Edwards \cite{edwards_rate_1990}, and a more recent work was done 
by Blair and al. \cite{blair_coagulation_2007}. A part of the book of Banasiak, Lamb and Laurençot \cite{banasiak_analytic_2019} 
treat those equations using semigroups techniques. Different authors \cite{baird_mixed_2019} and \cite{ali_note_2025} considered 
a hybrid model of fragmentation still with discrete and continuous mass loss. \newline

In this work, we start by proving well-posedness of (\ref{systeme equations BD-Depoly}). The proof of existence follows the one 
of Laurençot \cite{laurencot_discrete_2002}  which, as for the proof of Ball and al. \cite{ball_becker-doring_nodate} and \cite{ball_discrete_1990}, 
relies on a truncated system and compactness arguments to obtain the limit. Uniqueness is more classical and follows the initial 
work of Ball and Carr \cite{ball_discrete_1990}. As in the Becker-Döring equations two distinct cases appears, however here it 
does not depend on the initial mass, since our system does not preserve it, but instead on the production rate $\lambda$. There 
is a threshold $\lambda_s$ which dictates significant change in the dynamic. In the sub-critical case $\lambda \leq \lambda_s$, 
it may occur that
\begin{itemize}
    \item either there exists a unique steady-state for $b_i \leq ia_i$,
    \item or there exists an infinite number of steady-states for $b_i > i^\nu a_i$ with $\nu > 1$.
\end{itemize}
We prove that if $\lambda$ is sufficiently small and $a_i < b_i \leq i$ then the steady-state is locally exponentially 
asymptotically stable. This corresponds to $b_i$ strong enough but not too much (moderate), this conditions will be made precise 
in the section \ref{section loc exp stab}.
In the super-critical case $\lambda > \lambda_s$, contrary to the Becker-Döring equations, we do not found a Lyapunov functional, 
thus we were not able to prove a result in the super-critical case; the question remains open, but we expect some self-similar 
behaviour of the solution \cite{wattis_similarity_2004}.
We end this work with a numerical scheme. We develop, on the conservative truncation, 
a well-balanced and coarse-grained scheme, which consist of sub-sampling the clusters at some given size, preserving the asymptotics. 
The scheme is based on the flux approximation scheme of Ducan and 
Soheili \cite{duncan_approximating_2001} and the well-balanced scheme of Goudon and Monasse \cite{goudon_fokker-planck_2020}.
This scheme can be use on the classical Becker-Döring equations.

\section{Main results}

In this section, we introduce some notations, hypotheses, and state the main results. Details and proofs are 
presented in their respected section.
Before starting our existence and uniqueness result, we provide the definition of a solution to (\ref{systeme equations BD-Depoly}).

\begin{defi}
    Let $T\in (0,+\infty]$. A solution $C = (C_i)_{i\geq 1}$ of (\ref{systeme equations BD-Depoly}) on $[0,T)$ is a sequence of non-negative functions satisfying the following conditions for all $i\geq 1$, $t\in [0,T)$,
    \begin{enumerate}[label=(\roman*)]
        \item $C_i \in \mathcal{C}^0([0,t))$, $~\displaystyle\sum_{j=1}^{+\infty} a_{j}C_j \in \Lrm^1(0,t),$
        \item and there holds
            \begin{equation*}
                \left\{
                \begin{aligned}
                C_1(t) &= C_1(0) + \lambda t - \int_0^t C_1(s)\sum_{j=1}^{+\infty}a_{j}C_j(s) - a_1C_1(s)^2 ds , & i=1 \\ 
                C_i(t) &= C_i(0) + \int_0^t \left[ J_{i-1}(s) - J_i(s)\right]ds,  & i\geq 2.
                \end{aligned}
                \right.
            \end{equation*} 
    \end{enumerate}
    \label{defi solution BD-Depoly}
\end{defi}

We start by stating the weak form of (\ref{systeme equations BD-Depoly}), which is very useful to compute moments, or to find some uniform bounds.
\begin{prop}
    Let $C$ a solution of (\ref{systeme equations BD-Depoly}). For all compactly supported sequences $(\varphi_i)_{i\geq 1}$, we have
    \begin{equation}
        \frac{d}{dt}\sum_{i=1}^{+\infty} \varphi_iC_i = \lambda\varphi_1 - \varphi_2b_2C_2 + \sum_{i=1}^{+\infty} \left[\varphi_{i+1}-\varphi_i-\varphi_1\right]a_iC_1C_i + \sum_{i=3}^{+\infty} \left[\varphi_{i-1}-\varphi_i\right]b_iC_i.
        \label{weak form 1}
    \end{equation}
    We can write it under another form as follows
    \begin{equation}
        \frac{d}{dt}\sum_{i=1}^{+\infty} \varphi_iC_i = \varphi_1\left(\lambda -b_2C_2 - \sum_{i=2}^{+\infty} b_iC_i\right) + \sum_{i=1}^{+\infty} \left[\varphi_{i+1}-\varphi_i-\varphi_1\right]\left(a_iC_1C_1-b_{i+1}C_{i+1}\right).
        \label{weak form 2}
    \end{equation}
\end{prop}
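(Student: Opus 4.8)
The plan is to reduce to a finite sum, differentiate the integral equations of Definition~\ref{defi solution BD-Depoly}, substitute, and then rearrange by a discrete summation by parts. Since $(\varphi_i)_{i\ge 1}$ is compactly supported, fix $N$ with $\varphi_i=0$ for all $i\ge N$, so that $\sum_{i\ge 1}\varphi_iC_i=\sum_{i=1}^{N}\varphi_iC_i$ is a \emph{finite} sum. By Definition~\ref{defi solution BD-Depoly}, each $C_i$ is absolutely continuous on $[0,T)$: for $i\ge 2$ because $J_{i-1}-J_i$ is continuous, and for $i=1$ because hypothesis (i) gives $C_1\sum_j a_jC_j-a_1C_1^2\in\Lrm^1_{\mathrm{loc}}(0,T)$. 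Hence $t\mapsto\sum_{i=1}^N\varphi_iC_i$ is absolutely continuous and, for a.e.\ $t$,
\begin{equation*}
\frac{d}{dt}\sum_{i=1}^{N}\varphi_iC_i=\varphi_1\Bigl(\lambda-\sum_{j=1}^{+\infty}a_jC_1C_j-a_1C_1^2\Bigr)+\sum_{i=2}^{N}\varphi_i\bigl(J_{i-1}-J_i\bigr).
\end{equation*}

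Next I would reorganise the telescoping part by Abel summation: writing $\sum_{i=2}^{N}\varphi_i(J_{i-1}-J_i)=\sum_{i=1}^{N-1}\varphi_{i+1}J_i-\sum_{i=2}^{N}\varphi_iJ_i$ and using $\varphi_N=\varphi_{N+1}=0$, this equals $\varphi_1J_1+\sum_{i\ge 1}(\varphi_{i+1}-\varphi_i)J_i$ (a finite sum). Since $J_1=a_1C_1^2-b_2C_2$, the $\varphi_1$-contributions collapse to $\varphi_1\bigl(\lambda-\sum_j a_jC_1C_j-b_2C_2\bigr)$. To reach (\ref{weak form 1}), I would then expand $J_i=a_iC_1C_i-b_{i+1}C_{i+1}$ in the remaining sum, split $-\varphi_1\sum_j a_jC_1C_j$ term by term into the coagulation sum to obtain the coefficient $\varphi_{i+1}-\varphi_i-\varphi_1$ (now an infinite but absolutely convergent sum by (i)), reindex the fragmentation part by $k=i+1$ to get $\sum_{k\ge 2}(\varphi_{k-1}-\varphi_k)b_kC_k$, and absorb the boundary term: $(\varphi_1-\varphi_2)b_2C_2$ together with $-\varphi_1b_2C_2$ equals $-\varphi_2b_2C_2$, leaving the fragmentation sum from $k=3$. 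This is exactly (\ref{weak form 1}).

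For (\ref{weak form 2}) the cleanest route is not to expand $J_i$ at the Abel step. Using the identity $\sum_{i\ge 2}b_iC_i=\sum_{i\ge 1}b_{i+1}C_{i+1}$, the term $\varphi_1(\lambda-\sum_j a_jC_1C_j-a_1C_1^2)+\varphi_1J_1$ can be rewritten as $\varphi_1\bigl(\lambda-b_2C_2-\sum_{i\ge 2}b_iC_i\bigr)-\varphi_1\sum_{i\ge 1}\bigl(a_iC_1C_i-b_{i+1}C_{i+1}\bigr)$; adding $\sum_{i\ge 1}(\varphi_{i+1}-\varphi_i)J_i$ then recombines the two sums into $\sum_{i\ge 1}(\varphi_{i+1}-\varphi_i-\varphi_1)(a_iC_1C_i-b_{i+1}C_{i+1})$, which is (\ref{weak form 2}) (the $C_1C_1$ appearing there being an evident typo for $C_1C_i$). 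Alternatively, one checks directly that (\ref{weak form 1}) and (\ref{weak form 2}) differ only by the same reindexing of the fragmentation sum.

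All the algebra is elementary once the finiteness of $\sum_{i=1}^N\varphi_iC_i$ is exploited; no truncation or compactness argument is needed here. The only genuine point requiring care is the justification of differentiating under the sum and the a.e.\ character of the identity, which is precisely where hypothesis (i) enters (the infinite loss term in the $C_1$-equation makes $C_1$ merely absolutely continuous rather than $\mathcal{C}^1$), together with the index bookkeeping at $i=1,2$, where the asymmetry of the model (the reaction $(2)\to\emptyset$ instead of $(2)\to(1)$, and the quadratic monomer loss) shows up.
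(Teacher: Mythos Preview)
Your proof is correct. The paper states this proposition without proof, treating it as an elementary computation; your derivation---reducing to a finite sum via the compact support of $(\varphi_i)$, differentiating the integral equations from Definition~\ref{defi solution BD-Depoly}, and performing the Abel summation with careful tracking of the boundary terms at $i=1,2$---is exactly the standard route and all the index bookkeeping checks out, including your observation that $C_1C_1$ in (\ref{weak form 2}) is a typo for $C_1C_i$.

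One small remark: your derivation of (\ref{weak form 2}) tacitly splits $\sum_{i\ge 1}(\varphi_{i+1}-\varphi_i-\varphi_1)J_i$ into the coagulation and fragmentation pieces and uses $\sum_{i\ge 2}b_iC_i=\sum_{i\ge 1}b_{i+1}C_{i+1}$, which presupposes that $\sum_i b_iC_i$ is finite (this is not part of Definition~\ref{defi solution BD-Depoly}). This is really an issue with the statement rather than your argument---the two infinite $b$-sums in (\ref{weak form 2}) cancel for large $i$, so the identity should be read as a formal rearrangement of (\ref{weak form 1})---but it is worth flagging if you want the derivation to be fully rigorous as written.
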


\bigskip

We define the following Banach spaces in which solutions will lie. Let $\alpha \geq 0$, we define the following subspaces of 
$\ell^1(\mathbb{R})$,
\begin{equation*}
    X_\alpha := \left\{ x = (x_k)_{k\geq 1} \in \mathbb{R}^\mathbb{N} ~:~ \|x\|_{X_\alpha} := \sum_{k=1}^{+\infty} k^\alpha|x_k| < +\infty \right\},
\end{equation*}
and we denote $X_\alpha^+$ its positive cone. Some of these spaces have physical meaning, for example the norm in $X_0 = \ell^1$ is 
proportional to the total number of clusters, or the norm of $X_1$ is proportional to the mass of all the clusters in our system. 
Some properties of these spaces can be found in the review of da Costa \cite[Section 2.1]{DaCostaOverview}.

\subsection{Well-posedness}

We state the main results on existence and uniqueness of solutions under some assumptions on the kinetic coefficients, mainly on 
the coagulation ones, and on the initial data. 
Let $\alpha \in [0,1]$.
\begin{enumerate}[label=(H\arabic*)]
    \item There exists $a \geq 0$ such that $0 \leq a_{i} \leq a i^\alpha$ and $b_i \geq 0$ for all $i\geq 1$. \label{borne a_i sous lineaire et positivite b_i}
\end{enumerate}
\vspace{0.1cm}

In the classical proof of existence from Ball, Carr and Penrose \cite{ball_becker-doring_nodate} or 
for the general coagulation-fragmentation equations from \cite{ball_discrete_1990} or \cite{laurencot_discrete_2002} the natural 
space for the existence is $X_1$ since the equations are mass preserving. Here equations (\ref{systeme equations BD-Depoly}) do not 
preserve mass, therefore we can consider a larger space for the existence, in which mass does not necessarily have meaning or may blow 
up. We have the following existence results in $X_\alpha$ for $\alpha \in [0,1]$.
\begin{thm}[\textbf{Existence and moment spreading}]
    Let $C^{init} \in X_\alpha^+$. Under the hypothesis \ref{borne a_i sous lineaire et positivite b_i}, 
    there exists at least one solution $C \in \mathcal{C}^0([0,+\infty),X_\alpha^+)$ to (\ref{systeme equations BD-Depoly}) with initial 
    data $C(0) = C^{init}$. Moreover, if there exists $\mu \geq \alpha$ such that 
    \begin{equation*}
        \sum_{i=1}^{+\infty} i^\mu C_i^{init} < +\infty,
    \end{equation*}
    then any solution satisfies for all $T>0$
    \begin{equation*}
        \sup_{t\in [0,T]}\sum_{i=1}^{+\infty} i^\mu C_i(t) < +\infty.
    \end{equation*}
    \label{thm existence X_alpha}
\end{thm}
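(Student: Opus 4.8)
I would follow the classical truncation scheme of Ball, Carr and Penrose and of Laurençot. For $n\geq2$, first set up the finite system obtained from (\ref{systeme equations BD-Depoly}) by truncating every sum at $n$ and forcing $J_n^n\equiv0$ and $C_i^n\equiv0$ for $i>n$. Its right-hand side is locally Lipschitz, so there is a unique local solution; examining the vector field on the faces $\{C_i^n=0\}$ (on $\{C_1^n=0\}$ the first equation reads $\lambda\geq0$, and on $\{C_i^n=0\}$ with $i\geq2$ it reads $a_{i-1}C_1^nC_{i-1}^n+b_{i+1}C_{i+1}^n\geq0$) shows that the positive cone is invariant. Summing the equations and telescoping the fluxes gives
\begin{equation*}
\frac{d}{dt}\sum_{i=1}^{n}C_i^n=\lambda-b_2C_2^n-\sum_{j=1}^{n}a_jC_1^nC_j^n\leq\lambda ,
\end{equation*}
hence $\sum_{i\leq n}C_i^n(t)\leq\|C^{init}\|_{X_0}+\lambda t$, so the truncated solution is global and $C_1^n(t)\leq\|C^{init}\|_{X_0}+\lambda t=:K_T$ on $[0,T]$, uniformly in $n$.

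The core of the existence proof is a family of uniform moment bounds. Testing the truncated system against the weight $i^\alpha$ and using $(i+1)^\alpha-i^\alpha\leq\alpha i^{\alpha-1}\leq1$ for $\alpha\in[0,1]$, together with $b_i\geq0$, every term other than $\lambda$ is non-positive, whence $\frac{d}{dt}\sum_{i\leq n}i^\alpha C_i^n\leq\lambda$ and $\sum_{i\leq n}i^\alpha C_i^n(t)\leq\|C^{init}\|_{X_\alpha}+\lambda t$, uniformly in $n$. This $X_\alpha$ bound is not quite enough to pass to the limit in the coagulation term, so I would strengthen it via a de la Vallée-Poussin argument: since $(i^\alpha C_i^{init})_i\in\ell^1$, there is a non-decreasing concave $\sigma:[1,+\infty)\to[1,+\infty)$ with $\sigma(r)\to+\infty$ and $\sum_i\sigma(i)i^\alpha C_i^{init}<+\infty$. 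Testing the truncated system against $\sigma(i)i^\alpha$ and using the elementary consequence $i\,(\sigma(i+1)-\sigma(i))\leq2\sigma(i)$ of concavity, together with $a_i\leq ai^\alpha$ and $\alpha\leq1$, the coagulation contribution is, up to a bounded additive term, at most $c_\alpha K_T\sum_i\sigma(i)i^\alpha C_i^n$, so Grönwall's lemma yields $\sup_n\sup_{t\in[0,T]}\sum_i\sigma(i)i^\alpha C_i^n(t)<+\infty$. Because $\sigma(i)\to+\infty$, this gives equi-integrability of $(a_iC_i^n)_i$, i.e. $\sup_n\sum_{i>N}a_iC_i^n\to0$ as $N\to+\infty$ uniformly on $[0,T]$. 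I expect this equi-integrability step to be the main obstacle, exactly because hypothesis \ref{borne a_i sous lineaire et positivite b_i} only requires $a_i\leq ai^\alpha$, not $a_i=o(i^\alpha)$.

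The passage to the limit is then standard. For each fixed $i$, $(C_i^n)_n$ is bounded and, by the equation, equi-Lipschitz on $[0,T]$; Arzelà-Ascoli and a diagonal extraction give a subsequence with $C_i^n\to C_i$ in $\mathcal{C}^0([0,T])$ for all $i$ and $T$, $C_i\geq0$, and Fatou's lemma gives $\sum_i i^\alpha C_i(t)\leq\|C^{init}\|_{X_\alpha}+\lambda t$. For $i\geq2$ one passes to the limit in the integral identity of Definition \ref{defi solution BD-Depoly} by dominated convergence; for $i=1$ one splits $\sum_{j\leq n}a_jC_1^nC_j^n=\sum_{j\leq N}+\sum_{N<j\leq n}$, the first part converging for fixed $N$ and the tail being uniformly small by the equi-integrability above, which also gives $\sum_j a_jC_j\in\Lrm^1(0,t)$. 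Finally, the uniform tail bound $\sum_{i>N}i^\alpha C_i(t)\leq\varepsilon_N\to0$ on $[0,T]$ together with the pointwise continuity of each $C_i$ upgrades $t\mapsto C(t)$ to a continuous curve in $X_\alpha$, so $C\in\mathcal{C}^0([0,+\infty),X_\alpha^+)$.

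For the moment spreading, assume $\sum_i i^\mu C_i^{init}<+\infty$ with $\mu\geq\alpha$. Testing the truncated system against $i^\mu$ and using $(i+1)^\mu-i^\mu\leq\mu2^{\mu-1}i^{\mu-1}$, $a_i\leq ai^\alpha\leq ai$ (as $\alpha\leq1$) and $C_1^n\leq K_T$, the coagulation term is $\leq c_\mu K_T\sum_{i\leq n}i^\mu C_i^n$ while the fragmentation term is $\leq0$; Grönwall then gives $\sup_n\sup_{t\leq T}\sum_{i\leq n}i^\mu C_i^n(t)\leq e^{c_\mu K_T T}\big(\sum_i i^\mu C_i^{init}+\lambda T\big)$, and Fatou's lemma transfers this to the constructed solution. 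For an arbitrary solution one first notes that Definition \ref{defi solution BD-Depoly}(ii) gives $C_1(t)\leq C_1^{init}+\lambda t$, hence $\sup_{t\leq T}C_1(t)<+\infty$, and then runs the same Grönwall estimate on the weak formulation (\ref{weak form 1}) with the bounded non-decreasing cut-off weights $\varphi_i^N=\min(i,N)^\mu$ (which are eventually constant, so the truncation creates no spurious fragmentation gain), letting $N\to+\infty$ by monotone convergence.
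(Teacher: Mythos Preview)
Your existence argument follows the same scheme as the paper: truncate, obtain uniform moment bounds via a de~la~Vallée--Poussin weight and Grönwall, extract a limit by compactness, and pass to the limit in the integral equations using the resulting tail control. The paper phrases the super-linear weight as $U_0(i^\alpha)$ with $U_0\in\mathcal{K}_{1,\infty}$ (its Theorem~3.5) and extracts the limit via Helly's theorem from the $W^{1,1}$ bound of Lemma~\ref{lemme existence 2}, whereas you use a concave $\sigma$ and Arzelà--Ascoli from an equi-Lipschitz bound; both variants are valid and lead to the same conclusion.

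For moment spreading, the paper's proof only establishes the bound for the \emph{constructed} solution (by passing Lemma~\ref{lemme existence 1} to the limit along the subsequence), not for ``any solution'' as the statement of Theorem~\ref{thm existence X_alpha} reads. You attempt the stronger claim with the weights $\varphi_i^N=\min(i,N)^\mu$ in the weak form (\ref{weak form 1}), but this has a gap: (\ref{weak form 1}) is stated only for \emph{compactly supported} test sequences, and $\min(i,N)^\mu$ is eventually equal to $N^\mu$, not $0$. To justify (\ref{weak form 1}) for such weights you would need at least $\sum_i C_i(t)<\infty$ for the arbitrary solution, which Definition~\ref{defi solution BD-Depoly} does not give. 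If instead you use the truly compactly supported weights $i^\mu\mathbb{1}_{i\leq N}$, the fragmentation boundary term $N^\mu b_{N+1}C_{N+1}$ reappears on the right-hand side, and hypothesis~\ref{borne a_i sous lineaire et positivite b_i} places no upper bound on $b_i$, so you cannot absorb it. Thus the ``any solution'' part of the moment spreading remains unproved in your argument --- as it does, implicitly, in the paper's own proof.
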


\noindent We actually prove a more general result including more than just power law moments but a wilder class, see section \ref{section existence}.
This theorem is inspired from \cite[Theorem 2.5]{laurencot_discrete_2002}, and it generalises \cite[Theorem 3.3]{carr_asymptotic_1994} 
which proves moment spreading in coagulation-fragmentation equations.

\begin{thm}[\textbf{Partial uniqueness}]
    Let $C^{init}\in X_\alpha^+$ and $T\in (0,+\infty]$. Under the hypothesis \ref{borne a_i sous lineaire et positivite b_i},
    there exists at most one solution $C$ to (\ref{systeme equations BD-Depoly}) on $[0,T)$ with initial condition $C(0)=C^{init}$ such that
    \begin{equation}
        C \in \Lrm^\infty([0,t],X_\alpha^+) \text{ and } \sum_{i=1}^{+\infty} i^{2\alpha} C_i \in \Lrm^1(0,t) \text{ for all } t \in (0,T).
        \label{sum iphi_iC_i unicité}
    \end{equation}
    \label{thm unicité X_alpha}
\end{thm}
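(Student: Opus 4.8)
The plan is to argue by contradiction in the usual Ball–Carr fashion: suppose $C$ and $\tilde C$ are two solutions on $[0,T)$ with the same initial data $C^{init}$, both satisfying the regularity \eqref{sum iphi_iC_i unicité}, and set $D_i := C_i - \tilde C_i$. Fix $t \in (0,T)$ and define the weighted error $\Delta(s) := \sum_{i=1}^{+\infty} i^{\alpha}\,|D_i(s)|$ for $s \in [0,t]$. The goal is to establish a differential inequality $\frac{d}{ds}\Delta(s) \le K(s)\,\Delta(s)$ with $K \in \Lrm^1(0,t)$, after which Grönwall and $\Delta(0)=0$ force $\Delta \equiv 0$ on $[0,t]$; since $t$ is arbitrary this gives uniqueness on $[0,T)$. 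To make the manipulations legitimate one works first with the truncated weights $\varphi_i = i^{\alpha}\sigma_i$, where $\sigma_i = \sign(D_i)$ cut off at level $n$ (i.e. $\varphi_i = 0$ for $i>n$), applies the weak form \eqref{weak form 1} to $C$ and to $\tilde C$, subtracts, and then passes $n \to \infty$ using the integrability hypotheses; this is the standard device that avoids differentiating the absolute value and controls the tail.

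The key computation is the subtraction of the two weak formulations. Writing $J_i - \tilde J_i = a_i(C_1 C_i - \tilde C_1 \tilde C_i) - b_{i+1} D_{i+1}$ and expanding $C_1 C_i - \tilde C_1 \tilde C_i = C_1 D_i + \tilde C_i D_1$, one collects the terms. The linear fragmentation contributions $b_i$ enter with the good sign: the reactions $(i)\to(i-1)$ and $(2)\to\emptyset$ are pure losses/transfers downward, so after multiplying by $i^\alpha\sigma_i$ and summing, the $b$-terms contribute something bounded by $\sum_i b_i(\text{differences of weights})|D_i|$; since the weights $i^\alpha$ are increasing and $b_i\ge 0$, the dangerous part of these is actually $\le 0$ or absorbed, and in any case is dominated by a multiple of $\sum_i i^\alpha b_i |D_i|$ — but here one must be slightly careful, because no growth bound on $b_i$ is assumed. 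The point is that the $b_i C_{i}$ terms are already known to be integrable in time via \eqref{sum iphi_iC_i unicité} applied through \eqref{flux J BD-Depoly} and the $\Lrm^1$ control of $\sum a_j C_j$: one re-expresses $b_{i+1}C_{i+1} = a_i C_1 C_i - J_i$ and uses that the fluxes telescope, so the fragmentation never needs an independent pointwise bound. The coagulation terms produce, after the weight differences $|{(i+1)^\alpha - i^\alpha - 1}| \le C$ (bounded since $\alpha\le 1$), contributions of the form $\sum_i i^\alpha a_i (C_1 |D_i| + \tilde C_i |D_1|) \lesssim a\big(\|C_1\|_\infty + \text{const}\big)\sum_i i^{2\alpha}|D_i| + a |D_1|\sum_i i^{2\alpha}\tilde C_i$, using \ref{borne a_i sous lineaire et positivite b_i} in the form $a_i \le a i^\alpha$ so that $i^\alpha a_i \le a i^{2\alpha}$.

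Assembling these estimates yields
\begin{equation*}
    \Delta(t) \le \int_0^t K(s)\,\Delta(s)\,ds, \qquad K(s) := C\Big(a\,C_1(s) + a\sum_{i=1}^{+\infty} i^{2\alpha}\tilde C_i(s) + a\sum_{i=1}^{+\infty} i^{2\alpha} C_i(s) + \text{(frag.\ terms)}\Big),
\end{equation*}
and the hypothesis \eqref{sum iphi_iC_i unicité} — precisely the requirements $C,\tilde C \in \Lrm^\infty([0,t],X_\alpha^+)$ (giving $C_1 \in \Lrm^\infty$) and $\sum i^{2\alpha} C_i, \sum i^{2\alpha}\tilde C_i \in \Lrm^1(0,t)$ — is exactly what guarantees $K \in \Lrm^1(0,t)$. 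Grönwall's lemma then gives $\Delta \equiv 0$. The main obstacle I anticipate is the rigorous truncation/limit passage controlling the fragmentation tail without any assumed bound on $b_i$: one has to exploit that $\sum_{i>n} i^\alpha b_{i+1} C_{i+1}$ is controlled through the identity $b_{i+1}C_{i+1} = a_i C_1 C_i - J_i$ together with the telescoping of $\sum_{i=2}^\infty (\varphi_{i-1}-\varphi_i)$-type sums and the $\Lrm^1(0,t)$-integrability of $\sum a_j C_j$, and to check that the limiting inequality is preserved (Fatou on the left, dominated convergence on the right). Once that bookkeeping is done, the rest is the classical Ball–Carr Grönwall argument.
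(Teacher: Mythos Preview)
Your plan is the same Ball--Carr route the paper takes (truncated test sequence $\varphi_i=i^\alpha\sign(D_i)$, subtract the two weak forms, sign trick, Gr\"onwall), but two points in your sketch are off, and the first one actually breaks the argument.

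\emph{The coagulation estimate does not close as you wrote it.} With $\varphi_i=i^\alpha\sigma_i$ the combination $\varphi_{i+1}-\varphi_i-\varphi_1$ is \emph{not} bounded by a constant: the signs $\sigma_{i+1},\sigma_i,\sigma_1$ may all differ and the modulus is of order $i^\alpha$. The saving mechanism is the sign itself. After writing $C_1C_i-\tilde C_1\tilde C_i = C_1 D_i + \tilde C_i D_1$, the product with $D_j$ kills the $\varphi_j$-term:
\[
(\varphi_{i+1}-\varphi_i-\varphi_1)\,D_i \le \bigl[(i+1)^\alpha-i^\alpha+1\bigr]\,|D_i|\le 2|D_i|,\qquad
(\varphi_{i+1}-\varphi_i-\varphi_1)\,D_1 \le 2i^\alpha |D_1|.
\]
This is exactly the paper's inequality~(\ref{demo unicite inegalite g_{i+1}}). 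The two halves therefore carry \emph{different} powers of $i$: the first gives $\sum_i a_i C_1|D_i|\le a\,C_1(s)\,\Delta(s)$, and only the second produces $\sum_i i^\alpha a_i\tilde C_i|D_1|\le a\,\Delta(s)\sum_i i^{2\alpha}\tilde C_i(s)$. Your stated contribution ``$\sum_i i^\alpha a_i(C_1|D_i|+\tilde C_i|D_1|)$'' places the extra $i^\alpha$ on the $C_1|D_i|$ term too, which leads to $\sum_i i^{2\alpha}|D_i|$ on the right-hand side; that quantity is not dominated by $\Delta$, and the Gr\"onwall inequality you announce does not follow from it.

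\emph{The fragmentation is simpler than you anticipate.} Because $i\mapsto i^\alpha$ is increasing, one has $(\varphi_i-\varphi_{i-1})D_i=\bigl(i^\alpha-(i-1)^\alpha\sign(D_{i-1}D_i)\bigr)|D_i|\ge 0$, so the whole internal fragmentation sum is $\le 0$ and is simply discarded (this is (\ref{demo unicite positivite terme}) in the paper). There is no need to rewrite $b_{i+1}C_{i+1}$ via the flux $J_i$ or to chase a fragmentation tail. What does require care in the limit $n\to\infty$ are the two \emph{coagulation} remainders: the boundary term $g_n a_n(C_1C_n-\tilde C_1\tilde C_n)$ and the tail $\sum_{i\ge n}g_1 a_i(C_1C_i-\tilde C_1\tilde C_i)$ coming from the $C_1$-equation; both vanish by $a_i\le ai^\alpha$ together with $\sum i^{2\alpha}C_i\in \Lrm^1(0,t)$, which the paper packages as Lemma~\ref{lemme technique unicité}.
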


\begin{thm}[\textbf{Well-posedness}]
    Let $C^{init}\in X_{2\alpha}^+$ and $T\in (0,+\infty]$. Under the hypothesis \ref{borne a_i sous lineaire et positivite b_i},
    there exists a unique solution $C$ to (\ref{systeme equations BD-Depoly}) on $[0,T)$ with initial condition $C(0)=C^{init}$. Moreover, 
    this solution satisfies
    \begin{equation*}
        C \in \mathcal{C}^0([0,t],X_\alpha^+) \text{  and  } \sum_{i=1}^{+\infty} i^{2\alpha} C_i \in \Lrm^1(0,t) \text{ for all } t \in (0,T).
    \end{equation*}
    \label{thm wellposedness}
\end{thm}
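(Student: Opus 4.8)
The plan is to combine the three preceding results. First I would observe that since $C^{init} \in X_{2\alpha}^+ \subseteq X_\alpha^+$ (as $2\alpha \geq \alpha$ because $\alpha \geq 0$), Theorem~\ref{thm existence X_alpha} already gives at least one solution $C \in \mathcal{C}^0([0,+\infty),X_\alpha^+)$ with $C(0)=C^{init}$. The point of assuming the \emph{stronger} initial regularity $C^{init}\in X_{2\alpha}^+$ rather than just $X_\alpha^+$ is precisely to feed the moment-spreading part of Theorem~\ref{thm existence X_alpha}: taking $\mu = 2\alpha \geq \alpha$ in that theorem, the hypothesis $\sum_i i^{2\alpha} C_i^{init} < +\infty$ is satisfied, so every solution obeys $\sup_{t\in[0,T]}\sum_i i^{2\alpha}C_i(t) < +\infty$ for all $T>0$. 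This gives in particular $C \in \mathrm{L}^\infty([0,t],X_{2\alpha}^+) \subseteq \mathrm{L}^\infty([0,t],X_\alpha^+)$ for every $t$.

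Next I would check the integrability condition needed for uniqueness, namely $\sum_{i\geq 1} i^{2\alpha} C_i \in \mathrm{L}^1(0,t)$ for all $t\in(0,T)$. This is immediate from the moment bound just obtained: a function that is pointwise bounded on $[0,t]$ (indeed $\sum_i i^{2\alpha}C_i(s) \leq M_t$ uniformly in $s\in[0,t]$) is certainly in $\mathrm{L}^1(0,t)$, with $\int_0^t \sum_i i^{2\alpha}C_i(s)\,ds \leq t\,M_t < +\infty$. Hence the constructed solution satisfies both conditions in \eqref{sum iphi_iC_i unicité}. I should also note that the constructed solution genuinely lies in $\mathcal{C}^0([0,t],X_\alpha^+)$ (not merely $X_\alpha^+$ pointwise in a weak sense), which is part of the conclusion of Theorem~\ref{thm existence X_alpha}; and the uniform $X_{2\alpha}$ moment bound upgrades this, by a standard argument, to the stated regularity — one uses that uniform boundedness in $X_{2\alpha}$ together with continuity in $X_\alpha$ and $2\alpha > \alpha$ controls the tails, though for the theorem as stated it suffices to record $C \in \mathcal{C}^0([0,t],X_\alpha^+)$ directly from existence.

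Finally I would invoke Theorem~\ref{thm unicité X_alpha}: any solution on $[0,T)$ with initial data $C^{init}$ satisfying \eqref{sum iphi_iC_i unicité} is unique. Since the solution produced above satisfies \eqref{sum iphi_iC_i unicité}, it is \emph{the} unique such solution; and conversely any other solution on $[0,T)$ with this initial datum and this regularity must coincide with it. Combining existence, the moment estimate, and partial uniqueness therefore yields existence and uniqueness together with the claimed regularity $C\in\mathcal{C}^0([0,t],X_\alpha^+)$ and $\sum_i i^{2\alpha}C_i\in\mathrm{L}^1(0,t)$ for all $t\in(0,T)$, which is exactly the statement. There is no real obstacle here — the theorem is a packaging of the previous three — the only mild subtlety is bookkeeping the inclusions $X_{2\alpha}\subseteq X_\alpha$ and verifying that the uniform moment bound indeed implies the $\mathrm{L}^1$-in-time integrability required to trigger partial uniqueness.
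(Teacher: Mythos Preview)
Your proposal is correct and follows exactly the approach the paper indicates: the paper simply states that Theorem~\ref{thm wellposedness} is an immediate consequence of Theorem~\ref{thm existence X_alpha} (existence plus moment spreading with $\mu=2\alpha$) and Theorem~\ref{thm unicité X_alpha} (partial uniqueness), and you have spelled out precisely that combination. The only slight sharpening worth making explicit is that the moment-spreading clause of Theorem~\ref{thm existence X_alpha} applies to \emph{any} solution with $C^{init}\in X_{2\alpha}^+$, so every solution automatically satisfies \eqref{sum iphi_iC_i unicité} and uniqueness is therefore unconditional within Definition~\ref{defi solution BD-Depoly}, not just within the regularity class.
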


\noindent \textbf{Theorem \ref{thm wellposedness}} is an immediate consequence of \textbf{Theorem \ref{thm unicité X_alpha}} and 
\textbf{Theorem \ref{thm existence X_alpha}}. The partial uniqueness result \textbf{Theorem \ref{thm unicité X_alpha}} is 
inspired by \cite[Theorem 4.2]{ball_discrete_1990} and \cite[Proposition 5.1]{laurencot_discrete_2002} and state uniqueness under 
moments control.  
Laurençot and Mischler \cite[Theorem 2.1]{laurencot_beckerdoring_2002} gave another proof of uniqueness, on the Becker-Döring 
equations, without the assumption on moments control, but with an assumption on coefficients $(b_i)_{i\geq 1}$. Their proof 
is based on estimates of distribution tails and the mass conservation.
The proof of \textbf{Theorem \ref{thm unicité X_alpha}} is given in section \ref{section uniqueness}.
\newline

In the remainder, we will always assume that the kinetics coefficients satisfies the condition \ref{borne a_i sous lineaire et positivite b_i} 
for some suitable $\alpha \in [0,1]$.

\subsection{Long time behaviour}

\subsubsection{Uniform-in-time bound}

The first uniform-in-time bound is an upper bound on $C_1(t)$. We obtain directly from the equation on $C_1$ that for all $t\in [0,+\infty)$
we have
\begin{equation}
    C_1(t) \leq \kappa(\lambda,C^{init}) := \max\left\{C_1^{init},\sqrt{\frac{\lambda}{2a_1}}\right\}.
    \label{UIT bound of C_1}
\end{equation}

We prove an upper uniform-in-time bound on exponential moments under strong fragmentation, and this will allow us to prove the local 
exponential stability of a steady-state. The following proposition is inspired from a similar result in \cite[Lemma 3.3]{jabin_rate_2003}.
\begin{prop}[\textbf{Exponential moments uniform-in-time bound}]
    Assume that the kinetics coefficients satisfies the following conditions:
    \begin{equation}
        \forall i\geq 1, ~a_i \geq \underline{a} >0.
        \label{condition coefficient UIT exponential moment 1}
    \end{equation}
    Let $\lambda \geq 0$ and $C^{init}$ small enough such that 
    \begin{equation}
        \kappa(\lambda,C^{init}) < \inf_{i\geq 1} \frac{b_i}{a_i},
        \label{condition coefficient UIT exponential moment 2}
    \end{equation}
    and there exists  $\nu > 0$ such that
    \begin{equation}
        \sum_{i=1}^{+\infty} e^{\nu i}C_i^{init} < +\infty.
        \label{UIT bound exponential moment init}
    \end{equation}
    Then there exists a constant $\Xi_1 > 0$ such that for all $t\geq 0$,
    \begin{equation}
        \sum_{i=1}^{+\infty} e^{\nu i}C_i(t) \leq \Xi_1,
        \label{UIT bound exponential moment}
    \end{equation}
    where $\Xi_1$ depends only on $\lambda$, $(a_i)_{i\geq 1}$, $(b_i)_{i\geq 1}$, $\nu$ and initial data.
    \label{prop UIT exponential moment}
\end{prop}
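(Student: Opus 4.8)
The plan is to control the exponential moment $M(t):=\sum_{i\ge1}e^{\nu i}C_i(t)$ by testing the weak formulation (\ref{weak form 1}) against $\varphi_i=e^{\nu i}$ and letting the fragmentation terms absorb the (super-linear) coagulation contribution, which is made possible by the smallness condition (\ref{condition coefficient UIT exponential moment 2}). Since $(e^{\nu i})_{i\ge1}$ is not compactly supported, the computation is really carried out on the truncated systems used in the proof of Theorem \ref{thm existence X_alpha} — for which there is no boundary contribution at the top size $n$, the outgoing coagulation flux being set to zero there — and the resulting bound, being uniform in $n$ and in time, passes to the limit by Fatou's lemma. Throughout set $\kappa:=\kappa(\lambda,C^{init})$, $\beta:=\inf_{i\ge1}b_i/a_i$ and $\underline a:=\inf_{i\ge1}a_i>0$; then $\kappa<\beta$ by (\ref{condition coefficient UIT exponential moment 2}), and we may further assume $e^\nu\kappa<\beta$, i.e. $\nu<\log(\beta/\kappa)$, at no cost since (\ref{UIT bound exponential moment init}) is inherited by every smaller value of $\nu$. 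Put $\gamma:=\beta-e^\nu\kappa>0$, and recall $C_1(t)\le\kappa$ from (\ref{UIT bound of C_1}) (the same bound holds for the truncated solutions, by the same argument applied to the equation on $C_1^{(n)}$).

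Plugging $\varphi_i=e^{\nu i}$ into (\ref{weak form 1}), using $\varphi_{i+1}-\varphi_i-\varphi_1=e^{\nu i}(e^\nu-1)-e^\nu$ and $\varphi_{i-1}-\varphi_i=-e^{\nu i}(1-e^{-\nu})$, discarding the non-positive term $-e^\nu C_1\sum_i a_iC_i$, and using $C_1\le\kappa$ in the remaining non-negative coagulation sum, one obtains
\[
\frac{d}{dt}M \le \lambda e^\nu - e^{2\nu}b_2C_2 + (e^\nu-1)\kappa\sum_{i\ge1}e^{\nu i}a_iC_i - (1-e^{-\nu})\sum_{i\ge3}e^{\nu i}b_iC_i .
\]
Splitting the first sum according to $i=1$, $i=2$, $i\ge3$: the $i=1$ term is bounded (using $C_1\le\kappa$ once more) by the constant $A_0:=\lambda e^\nu+(e^\nu-1)e^\nu a_1\kappa^2$; for $i=2$, combining with $-e^{2\nu}b_2C_2$ and using $b_2\ge\beta a_2$, $a_2\ge\underline a$ gives a contribution $\le-\underline a\gamma e^{2\nu}C_2$ (since $(e^\nu-1)\kappa-\beta\le e^\nu\kappa-\beta=-\gamma$); and for $i\ge3$, writing $e^\nu-1=e^\nu(1-e^{-\nu})$ and using $b_i\ge\beta a_i$, $a_i\ge\underline a$,
\[
(e^\nu-1)\kappa a_i-(1-e^{-\nu})b_i=(1-e^{-\nu})\bigl(e^\nu\kappa a_i-b_i\bigr)\le(1-e^{-\nu})a_i(e^\nu\kappa-\beta)\le-(1-e^{-\nu})\underline a\gamma .
\]
With $c:=(1-e^{-\nu})\underline a\gamma>0$ (and noting $\underline a\gamma\ge c$), all terms with $i\ge2$ sum to at most $-c\sum_{i\ge2}e^{\nu i}C_i=-c\bigl(M-e^\nu C_1\bigr)\le-cM+ce^\nu\kappa$, whence
\[
\frac{d}{dt}M \le \bigl(A_0+ce^\nu\kappa\bigr)-cM .
\]
Grönwall's lemma then yields $M(t)\le\max\{M(0),(A_0+ce^\nu\kappa)/c\}$ for all $t\ge0$; running the same computation on the truncated systems produces the identical bound uniformly in the truncation parameter, and passing to the limit gives (\ref{UIT bound exponential moment}) with $\Xi_1:=\max\{\sum_{i\ge1}e^{\nu i}C_i^{init},(A_0+ce^\nu\kappa)/c\}$, finite by (\ref{UIT bound exponential moment init}) and depending only on $\lambda$, $(a_i)_{i\ge1}$, $(b_i)_{i\ge1}$, $\nu$ and the initial data.

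The heart of the argument — and its only subtle point — is the size-by-size comparison above: it is precisely the smallness of $\kappa$ imposed by (\ref{condition coefficient UIT exponential moment 2}), i.e. of $C^{init}$ and $\lambda$, that makes the fragmentation loss dominate the coagulation gain in every size class, at the price of the extra restriction $\nu<\log(\beta/\kappa)$; without it the super-linear growth of the exponential moment through coagulation cannot be controlled. The uniform lower bound $\underline a>0$ from (\ref{condition coefficient UIT exponential moment 1}) is used only to turn the size-weighted dissipation $\sum_i a_ie^{\nu i}C_i$ into a genuine multiple of $M$ itself. The remaining steps (justification via truncation, Grönwall) are routine.
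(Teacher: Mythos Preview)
Your proof is correct and follows essentially the same route as the paper: test against the exponential weight $e^{\nu i}$ (with $\nu$ small enough that $e^{\nu}\kappa<\inf_i b_i/a_i$), use the sizewise inequality $e^\nu\kappa a_i-b_i\le -\gamma a_i\le -\gamma\underline a$ to turn the coagulation gain into a dissipation of the full moment, and close by Gr\"onwall after justifying the formal computation via the truncated systems. The only cosmetic differences are that the paper organises the computation through the flux form $J_i$ rather than the weak form~(\ref{weak form 1}), and treats all sizes in one stroke instead of splitting $i=1$, $i=2$, $i\ge3$; the two arguments are otherwise identical in substance, including the (implicit in the paper, explicit in your write-up) reduction to $\nu<\log(\beta/\kappa)$.
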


\noindent We also prove, under some hypotheses on the kinetic coefficients, an upper bound on the mass and a lower bound on $C_1$.
Details and proofs are presented in section \ref{subsection UIT}.

\subsubsection{Steady-states}

We define the detailed balance coefficients $Q_i$ by
\begin{equation}
    Q_1 := 1,~~ Q_i := \prod\limits_{j=2}^i \frac{a_{j-1}}{b_j}, \text{ pour } i\geq 2.
    \label{defi Q_i}
\end{equation}
These coefficients play a significant role in the long-term behaviour of the solutions. It is from these coefficients that two 
significant quantities are defined. The first quantity is the \textit{critical monomer density}, denoted $z_s$, it is defined as the radius of 
convergence of the power series $\sum a_iQ_iz^i$. This quantity is also called \textit{monomer saturation density}, hence the subscript.
The second quantity is the \textit{critical production threshold}, denoted $\lambda_s$, it is defined as follows:
\begin{equation}
    \lambda_s := \sup_{z\leq z_s} \sum_{j=1}^{+\infty} a_jQ_jz^{j+1} + a_1 z^2.
    \label{defi lambda_s}
\end{equation}
We emphasize that both $z_s$ and $\lambda_s$ are completely determined by the coefficients $a_i$ and $b_i$. \newline

\begin{rmk}
    As the function $z\mapsto a_jQ_jz^{j+1}$ is increasing, the critical production threshold can be rewritten, when the series 
    converges, as follows
    \begin{equation*}
        \lambda_s =  \sum_{j=1}^{+\infty} a_jQ_jz_s^{j+1} + a_1 z_s^2.
    \end{equation*}
\end{rmk}

In the following, we add two hypotheses that can be readily verified in the case of constant coefficients, certain power laws, or 
Niethammer's coefficients \cite[(1.5) and (1.6)]{niethammer_evolution_2003}. 
These conditions are as follows.
\begin{enumerate}[label=(H\arabic*)] \addtocounter{enumi}{1}
    \item There exists $\underline{a}>0$ and $\underline{b}>0$ such that $\inf\limits_{i\geq 1}a_i=\underline{a}$ and $\inf\limits_{i\geq 1}b_i=\underline{b}$, \label{(H1)}
    \item There exists $l>0$ such that $\lim\limits_{i\to +\infty}\dfrac{Q_{i+1}}{Q_i}=\dfrac{1}{l z_s}$, \label{(H2)}
    \item $\lim\limits_{i\to +\infty}\dfrac{a_{i+1}}{a_i} = \lim\limits_{i\to +\infty}\dfrac{b_{i+1}}{b_i} = l$. \label{(H2bis)}
\end{enumerate}

\begin{rmk}
    Hypotheses \ref{(H2)} and \ref{(H2bis)} implies that $\lim\limits_{i\to +\infty}\frac{b_i}{a_i} = z_s$. \newline
    \label{rmk limite bi/ai}
\end{rmk}

Whether or not a steady-state exists depends on the critical production rate $\lambda_s$ and the production rate $\lambda$, and more 
particularly on their comparison. By steady-state we understand a constant solution $C$, in particular, 
$\sum_{i=1}^{+\infty} a_iC_i <\infty$.

\begin{thm}[\textbf{Steady-states}]
    Under the assumptions \ref{(H1)}-\ref{(H2bis)}, we have the two following possibilities:
    \begin{enumerate}
        \item (Sub-critical) If $\lambda \leq \lambda_s$, then there exists a unique $z\leq z_s$ verifying
        \begin{equation}
            \lambda = \sum_{j=1}^{+\infty} a_jQ_jz^{j+1} + a_1z^2
            \label{definition z sous-critique}
        \end{equation}
        such that $(Q_iz^i)_{i\geq 1}$ is a steady-state. Moreover, if $b_i\leq ia_i$ for $i\gg 1$ then $(Q_iz^i)_{i\geq 1}$ is the unique
        steady-state or if $b_i > i^\nu a_i$ for $i\gg 1$ and $\nu > 1$ then there is an infinity of steady-state.
        \item (Super-critical) Else $\lambda > \lambda_s$, and then there exists no steady-state.
    \end{enumerate}
    \label{thm equilibre}
\end{thm}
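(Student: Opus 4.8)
The plan is to look for constant solutions directly. A constant solution $C=(C_i)_{i\ge 1}$ of (\ref{systeme equations BD-Depoly}) must satisfy, from the equations on $C_i$ for $i\ge 2$, that $J_{i-1}=J_i$ for all $i\ge 2$; together with the equation on $C_1$ (which gives $\lambda = C_1\sum_j a_jC_j + a_1C_1^2$) and the weak form (\ref{weak form 2}) with $\varphi=\mathbf 1_{\{1\}}$, one checks that in fact all fluxes must vanish, $J_i=0$ for every $i\ge 1$. Indeed, summing the relations $J_{i-1}=J_i$ shows $J_i\equiv J_1$ is constant in $i$; but $\sum a_iC_i<\infty$ forces $a_iC_1C_i\to 0$, and since $C$ is a genuine element whose tail is summable one deduces $b_{i+1}C_{i+1}\to 0$ as well along a subsequence, hence the common value $J_1$ is $0$. (Alternatively: if $J_1=J_i=c\ne 0$ for all $i$, then $C_{i+1}=(a_iC_1C_i-c)/b_{i+1}$; I would argue this recursion cannot produce a non-negative summable sequence, using \ref{(H1)} which bounds $b_i$ and $a_i$ below.) Once $J_i=0$ for all $i\ge 1$ we get the detailed-balance relation $a_iC_1C_i=b_{i+1}C_{i+1}$, which by induction yields $C_i = Q_i C_1^i$ with $Q_i$ as in (\ref{defi Q_i}); writing $z:=C_1$ this is exactly $C_i=Q_iz^i$.

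Next I would handle the \emph{summability / consistency} constraint and the dichotomy on $\lambda$. For $(Q_iz^i)_{i\ge1}$ to be an admissible steady-state we need $\sum_i a_iQ_iz^i<\infty$, i.e. $z\le z_s$ by definition of the radius of convergence $z_s$, and we need the $C_1$-equation to hold, i.e.
\[
\lambda = z\sum_{j=1}^{+\infty}a_jQ_jz^{j} + a_1 z^2 = \sum_{j=1}^{+\infty}a_jQ_jz^{j+1}+a_1z^2 =: F(z).
\]
The function $F$ is continuous and strictly increasing on $[0,z_s]$ (each term $a_jQ_jz^{j+1}$ is, and $a_1>0$ by \ref{(H1)}), with $F(0)=0$ and $F(z_s)=\lambda_s$ by the Remark following (\ref{defi lambda_s}) (here one must be slightly careful when $z_s<\infty$ but the series at $z_s$ could a priori diverge; \ref{(H2)} gives $Q_{i+1}/Q_i\to 1/z_s$ and, combined with \ref{(H2bis)}, $a_{i+1}/a_i\to l$, so $a_{i+1}Q_{i+1}z_s^{i+1}/(a_iQ_iz_s^i)\to 1$, which needs to be reconciled with convergence — in the relevant regime $\lambda_s<\infty$ and the sup is attained). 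Hence: if $\lambda\le\lambda_s$ there is a unique $z\in[0,z_s]$ with $F(z)=\lambda$, giving existence and uniqueness of a steady-state of the form $(Q_iz^i)$; if $\lambda>\lambda_s$ there is no $z\le z_s$ solving $F(z)=\lambda$, and since any steady-state must have this form with $z=C_1\le z_s$, no steady-state exists. This already proves part (2) and the first assertion of part (1).

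Finally, the refined part of (1) — whether the exponential profile is the \emph{only} steady-state — is where the argument becomes delicate. The point is that I have only shown a steady-state must satisfy $J_i=0$ for all $i$ \emph{given} that $\sum a_iC_i<\infty$ and the tail of $C$ is summable in the required sense; but one should ask whether dropping the flux-vanishing forces the profile. Reconsidering: a steady-state has $J_i\equiv J_1$ constant; if $b_i\le ia_i$ for large $i$ then $b_i\le a i^{1+\alpha}$ grows only polynomially, and I would show the recursion $C_{i+1}=(a_iC_1C_i-J_1)/b_{i+1}$ with $J_1\ne0$ drives $C_i$ to be eventually of one sign / non-summable — more precisely, comparing with the homogeneous solution $Q_iC_1^i$, the particular part contributes $-J_1\sum_{k}\big(\prod_{j}a_j C_1/b_{j+1}\big)$-type terms whose sign is fixed, contradicting non-negativity or summability; this uses $b_i\le ia_i$ to control $\prod a_j/b_{j+1}$ from below. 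Conversely, when $b_i>i^\nu a_i$ with $\nu>1$, the ratios $a_iC_1/b_{i+1}$ decay super-geometrically, so for \emph{any} small enough $J_1\ne 0$ (and suitable sign) the recursion does yield a non-negative summable sequence with $\sum a_iC_i<\infty$ — producing a one-parameter family of steady-states. I expect the main obstacle to be this last step: making rigorous, in the $b_i>i^\nu a_i$ case, that the perturbed recursion stays non-negative and summable for a whole interval of $J_1$ values (choosing the sign of $J_1$ and bounding the tail via the super-geometric decay of $\prod_{j\le i} a_jz/b_{j+1}$), and symmetrically ruling it out under $b_i\le ia_i$.
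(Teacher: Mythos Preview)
Your main gap is the claim that every steady-state must have vanishing flux $J_i=0$. The argument ``$\sum a_iC_i<\infty$ forces $a_iC_1C_i\to 0$, and since $C$ is summable one deduces $b_{i+1}C_{i+1}\to 0$ along a subsequence'' only gives $J\le 0$: from $a_iC_1C_i\to 0$ and $J_i=J$ constant you get $b_{i+1}C_{i+1}\to -J$, and non-negativity forces $J\le 0$, not $J=0$. Nothing in the hypotheses \ref{(H1)}--\ref{(H2bis)} makes $b_{i+1}C_{i+1}\to 0$, and indeed the statement itself tells you this is false: the case $b_i>i^\nu a_i$ with $\nu>1$ produces a one-parameter family of steady-states, all with $J<0$. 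You partly acknowledge this in your last paragraph, but the error has already contaminated the proof of part~(2): your super-critical argument reads ``any steady-state must have this form with $z=C_1\le z_s$'', which you derived from $J=0$. So your non-existence claim for $\lambda>\lambda_s$ is unproved --- you have not excluded $J<0$ steady-states there.

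The paper's structure avoids this by treating $J>0$, $J=0$, $J<0$ as three separate cases from the outset. The case $J>0$ is ruled out as you would expect ($a_iC_1C_i\ge J$ for all $i$ contradicts summability). The case $J=0$ gives $(Q_iz^i)$ and the dichotomy on $\lambda$ as you describe. The case $J<0$ is the substantive one: writing $z=C_1$, the recursion solves explicitly as
\[
g_k(z)=Q_kz^k\Bigl[1-J\sum_{i=1}^{k-1}\frac{1}{a_iQ_iz^{i+1}}\Bigr],
\]
and one must test $\sum_k a_kg_k(z)<\infty$. For the super-critical case with $z_s<\infty$, hypotheses \ref{(H2)}--\ref{(H2bis)} give $a_k/b_k\to l/z_s>0$, and since the last term of the bracketed sum contributes $|J|\,a_k/b_k$ to $a_kg_k(z)$, the general term does not tend to zero --- this is the missing ingredient that kills $J<0$ steady-states when $\lambda>\lambda_s$. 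For the sub-critical refinements, the paper uses this explicit formula rather than qualitative recursion arguments: when $b_i\le ia_i$ a Taylor-remainder computation gives $\sum a_kg_k(z)\ge |J|\sum a_{i+1}/b_{i+1}\ge |J|\sum 1/i=\infty$; when $b_i>i^\nu a_i$ one shows the double sum converges for every $J<0$, and then the first equation selects a unique $z$ for each such $J$. Your sketches in the last paragraph are in the right spirit but would need these concrete formulas to close.
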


\subsubsection{Local exponential stability} \label{section loc exp stab}

The objective is to determine the long-term behaviour of the solution. We do not find a Lyapunov functional, consequently we used the 
linearised system of equations. We analyse the spectral properties of the linearised operator, which can be represented as a perturbation
of the one studied in the work of Cañizo and Lods \cite{canizo_exponential_2013}. \newline

In what follows, we assume the additional hypothesis:
\begin{enumerate}[label=(H\arabic*)]
    \setcounter{enumi}{4}
    \item $\exists ~\beta \in [0,1], \exists ~b >0, ~\forall i\geq 1, ~b_i\leq bi^\beta$. \label{(H4)} 
\end{enumerate}

\begin{rmk}
    We point out that \textbf{Remark \ref{rmk limite bi/ai}} with $z_s \neq 0$ and \ref{(H1)} implies that $\inf_{i\geq 1}\frac{b_i}{a_i}$ is a 
    positive constant.
\end{rmk}

\begin{thm}[\textbf{Local exponential convergence}]
    Assume \ref{borne a_i sous lineaire et positivite b_i}-\ref{(H4)}. 
    Let $\lambda < \lambda_s$ and $C^{init}$ such that 
    \begin{equation}
        \kappa(\lambda,C^{init}) < \inf_{i\geq 1}\frac{b_i}{a_i}
        \label{equilibrium sub-critical condition kappa bi/ai}
    \end{equation}
    and that (\ref{UIT bound exponential moment init}) holds for some $\nu >0$. Let $C=(C_i)_{i\geq 1}$ a solution of (\ref{systeme equations BD-Depoly}) 
    with initial condition $C^{init}$. Take $z$ satisfying (\ref{definition z sous-critique}). Then there exists some $\eta\in(0,\nu)$, some $K,\varepsilon>0$ 
    and some $\mu_\ast>0$ such that if
    \begin{equation}
        \frac{8b}{\sqrt{z}}\sqrt{\sum_{i=1}^{+\infty}i^{2\beta}Q_iz^i}\sup_{k\geq 1} \left(\sum_{j=k+1}^{+\infty} Q_jz^j\right)\left(\sum_{j=1}^k\frac{1}{a_jQ_jz^j}\right)< 1
        \label{equilibrium sub-critical condition P-norm spectral gap}
    \end{equation}
    and
    \begin{equation}
        \sum_{i=1}^{+\infty} \exp(\eta i)|C_i^{init}-Q_iz^i| \leq \varepsilon,
    \end{equation}
    then
    \begin{equation}
        \sum_{i=1}^{+\infty}\exp(\eta i)|C_i(t)-Q_iz^i| \leq K\exp(-\mu_\ast t),~ \forall t\geq 0.
        \label{local expo convergence}
    \end{equation}
    \label{thm local expo convergence}
\end{thm}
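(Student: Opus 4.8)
The plan is to linearise (\ref{systeme equations BD-Depoly}) about the steady-state $\bar C:=(Q_iz^i)_{i\geq 1}$ and run a perturbative argument around the resulting linear semigroup. Since $\lambda<\lambda_s$, Theorem \ref{thm equilibre} forces $z<z_s$, so by \ref{(H2)} the sequence $Q_iz^i$ decays geometrically; in particular $\bar C$ lies in every weighted space $Y_\sigma:=\{x:\sum_i e^{\sigma i}|x_i|<\infty\}$ with $\sigma<\log(z_s/z)$, and one works in $Y_\eta$ with $0<\eta<\min\{\nu,\log(z_s/z)\}$. Writing $C=\bar C+h$ and using the identities $b_{i+1}Q_{i+1}z^{i+1}=a_iz\,Q_iz^i$ and $\lambda=\sum_j a_jQ_jz^{j+1}+a_1z^2$, the equation for $h$ becomes $\partial_t h=\mathcal Lh+\mathcal Q(h)$ with $\mathcal L$ linear and $\mathcal Q$ purely quadratic, $\mathcal Q(h)_1=-a_1h_1^2-h_1\sum_j a_jh_j$ and $\mathcal Q(h)_i=a_{i-1}h_1h_{i-1}-a_ih_1h_i$ for $i\geq 2$. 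Before anything else I would collect the uniform a priori bounds: (\ref{UIT bound of C_1}) gives $C_1(t)\leq\kappa$, and since (\ref{equilibrium sub-critical condition kappa bi/ai}) is exactly the smallness assumption of Proposition \ref{prop UIT exponential moment}, that proposition gives $\sup_{t\geq 0}\sum_i e^{\nu i}C_i(t)\leq\Xi_1$, hence $\sup_{t\geq 0}\|h(t)\|_{Y_\nu}\leq\Xi_2:=\Xi_1+\|\bar C\|_{Y_\nu}$.

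The heart of the proof is a spectral gap for $\mathcal L$. Because the equations for $i\geq 2$ are identical to those of classical Becker–Döring and only the monomer equation is modified (by the injection $\lambda$, which drops out upon linearising, and by the irreversible fragmentation), $\mathcal L$ decomposes as $\mathcal L=\mathcal L_{CL}+\mathcal P$, where $\mathcal L_{CL}$ is the Becker–Döring operator linearised at the detailed-balance state $(Q_iz^i)_i$ analysed by Cañizo and Lods \cite{canizo_exponential_2013}, and $\mathcal P$ is supported on the first coordinate, $(\mathcal Ph)_1=-\sum_{j\geq 2}\gamma_jb_jh_j$ with $\gamma_j\in\{1,2\}$ and $(\mathcal Ph)_i=0$ for $i\geq 2$ (it merely removes the monomer-recycling terms absent in our model). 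In the weighted Hilbert space $H:=\ell^2(\bar C^{-1})$, $\|x\|_H^2=\sum_i|x_i|^2/(Q_iz^i)$, $\mathcal L_{CL}$ is a bounded perturbation of a self-adjoint operator whose Dirichlet form $\sum_i a_iQ_iz^i|g_{i+1}-g_i|^2$, by the discrete Hardy–Muckenhoupt inequality, yields a spectral gap on the mass-zero hyperplane bounded below by $\big(4\sup_k(\sum_{j>k}Q_jz^j)(\sum_{j\leq k}\frac{1}{a_jQ_jz^j})\big)^{-1}$, the remaining kernel direction $\partial_z\bar C$ being the one associated with mass conservation in Becker–Döring. By \ref{(H4)} and Cauchy–Schwarz, $\|\mathcal Ph\|_H\leq\frac{2b}{\sqrt z}\big(\sum_i i^{2\beta}Q_iz^i\big)^{1/2}\|h\|_H$, so (\ref{equilibrium sub-critical condition P-norm spectral gap}) is precisely the statement that $\|\mathcal P\|_{\mathcal B(H)}$ is strictly below that gap; since moreover $(\mathcal P\,\partial_z\bar C)_1<0$, a perturbation argument shows the kernel eigenvalue of $\mathcal L_{CL}$ is pushed strictly into the left half-plane, so $\mathcal L$ acquires a genuine spectral gap $-\mu_\ast<0$ on all of $H$. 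It then remains to transport this gap to $Y_\eta$; following \cite{canizo_exponential_2013} I would invoke the enlargement-of-functional-space method of Gualdani–Mischler–Mouhot, decomposing $\mathcal L=\mathcal A+\mathcal B$ with $\mathcal B$ dissipative on $Y_\eta$ and $\mathcal A$ bounded and $\mathcal B$-regularising, to conclude $\|e^{t\mathcal L}\|_{\mathcal B(Y_\eta)}\leq K_0e^{-\mu_\ast t}$ (after possibly shrinking $\mu_\ast$).

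With the semigroup estimate available, the nonlinear problem closes by a continuity argument on $u(t):=\|h(t)\|_{Y_\eta}$ through the Duhamel formula $h(t)=e^{t\mathcal L}h(0)+\int_0^te^{(t-s)\mathcal L}\mathcal Q(h(s))\,ds$, legitimate because $\sup_t\|h(t)\|_{Y_\nu}<\infty$ and the coefficients $a_i\leq ai^\alpha$, $b_i\leq bi^\beta$ are sub-exponential. The quadratic term is handled by interpolating between $Y_\eta$ and $Y_\nu$: for $\eta<\eta'<\nu$, $\|h\|_{Y_{\eta'}}\leq\|h\|_{Y_\eta}^{1-\theta}\|h\|_{Y_\nu}^\theta$ with $\theta=\frac{\eta'-\eta}{\nu-\eta}\in(0,1)$, and since $\|\mathcal Q(h)\|_{Y_\eta}\lesssim|h_1|\,\|h\|_{Y_{\eta'}}+a_1|h_1|^2\lesssim\|h\|_{Y_\eta}\|h\|_{Y_{\eta'}}+\|h\|_{Y_\eta}^2$ and $\|h\|_{Y_\nu}\leq\Xi_2$, one obtains the superlinear bound $\|\mathcal Q(h)\|_{Y_\eta}\lesssim\Xi_2^\theta u^{2-\theta}+u^2$ with $2-\theta>1$. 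Inserting the ansatz $u(t)\leq 2K_0\varepsilon e^{-\mu' t}$ (with $\mu'<\mu_\ast/2$) into the Duhamel inequality and choosing $\varepsilon$ small enough shows the bound is never attained, hence holds for all $t\geq 0$, which is (\ref{local expo convergence}) after renaming the constants and the rate. I expect the main obstacle to be the spectral-gap step: producing the quantitative gap for $\mathcal L_{CL}$ in the Hardy-constant form, controlling the open-system perturbation $\mathcal P$ against it — the source of hypothesis (\ref{equilibrium sub-critical condition P-norm spectral gap}) — and propagating the gap to the exponentially weighted $\ell^1$ space; the nonlinear bootstrap is comparatively routine once Proposition \ref{prop UIT exponential moment} supplies the uniform-in-time exponential moment.
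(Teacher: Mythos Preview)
Your proposal is correct and follows essentially the same route as the paper: split the linearisation into the Cañizo--Lods Becker--Döring operator plus a rank-one perturbation supported on the monomer coordinate, combine the Hardy-constant lower bound on the former's gap with the Cauchy--Schwarz bound on the perturbation's norm to produce condition (\ref{equilibrium sub-critical condition P-norm spectral gap}), transport the gap to the exponentially weighted $\ell^1$ space via the Gualdani--Mischler--Mouhot enlargement, and close by Duhamel together with the uniform exponential moment of Proposition \ref{prop UIT exponential moment}. The only cosmetic differences are that the paper works with the multiplicative fluctuation $C_i=Q_iz^i(1+h_i)$ (equivalent to your additive one through $h_i^{\mathrm{add}}=Q_iz^i\,h_i^{\mathrm{mult}}$, which also explains why your $H=\ell^2(\bar C^{-1})$ and $Y_\eta$ coincide with the paper's $\mathcal H=\ell^2(\Theta)$ and $\mathcal Z$) and that its bootstrap closes through $\|h\|_{\mathcal Z_{\mathcal D}}\lesssim\|h\|_{\mathcal Z}^{1/2}$ yielding the exponent $3/2$ rather than your interpolated $2-\theta$; you are in fact more explicit than the paper about the fate of the kernel direction of the Cañizo--Lods operator under the perturbation.
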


\begin{rmk}
    Condition (\ref{equilibrium sub-critical condition P-norm spectral gap}) is a sufficient condition. 
    One might ask whether this condition is ever met. To address this, we give the following examples.
\end{rmk}

\begin{expl}[\textbf{Constant coefficients}]
    Constant kinetics coefficients $a_i = a$ and $b_i = b$ satisfy hypotheses \ref{borne a_i sous lineaire et positivite b_i}-\ref{(H4)} 
    for some $a,b$ well choose. The condition (\ref{equilibrium sub-critical condition kappa bi/ai}) becomes either $C_1^{init} < \frac{b}{a}$ 
    or $\lambda < 2\frac{b^2}{a}$.
    The condition (\ref{equilibrium sub-critical condition P-norm spectral gap}) becomes
    \begin{equation*}
        \frac{(z_s-z)^{5/2}}{8zz_s^{5/2}} >1.
    \end{equation*}
    Since the left-hand side goes to $+\infty$ when $z$ goes to $0$, this condition holds for $\lambda$ small enough. Indeed, 
    recalling the definition of $z$ (\ref{definition z sous-critique}), we see that the smaller $\lambda$ is, the smaller $z$ is too.
    \label{expl constant coeff}
\end{expl}

\begin{expl}[\textbf{Linear coefficients}]
    Linear coefficients $a_i = ai$ and $b_i = bi$ satisfy hypotheses \ref{borne a_i sous lineaire et positivite b_i}-\ref{(H4)} for some $a,b$ well choose. The condition (\ref{equilibrium sub-critical condition P-norm spectral gap}) becomes for $z$ small enough
    \begin{equation*}
        \frac{32z}{\sqrt{z_s}(z_s-z)}<1.
    \end{equation*}
    As before, since the left-hand side goes to $0$ when $z$ goes to $0$, this condition holds for $\lambda$ small enough.
    \label{expl linear coeff}
\end{expl}

\noindent Details for \textbf{Example \ref{expl constant coeff}} and \textbf{Example \ref{expl linear coeff}} are respectively presented 
in \textbf{Appendix \ref{annexe constant coeff}} and \textbf{\ref{annexe linear coeff}}.

\subsection{Numerical simulations}

To simulate our equations numerically, we need to truncate our system, since we obviously can't keep an infinite number of 
ordinary differential equations. We therefore have different ways of truncating our system. Generally speaking, for 
coagulation-fragmentation equations, there are two ‘natural’ truncations: conservative truncation and non-conservative 
truncation. Conservative truncation means that at the size of truncation, we are not allowing this maximum size to coagulate 
and no aggregates of this size are the result of depolymerisation, we have the following system of equations:
\begin{equation}
    \left\{
    \begin{aligned}
    \frac{d}{dt}C_1(t) &= \lambda - \sum_{j=1}^{n-1}a_{j}C_1(t)C_j(t) - a_1C_1(t)^2, & i=1, \\ 
    \frac{d}{dt}C_i(t) &= J_{i-1}(t) - J_i(t),  & 1<i<n, \\
    \frac{d}{dt}C_n(t) &= J_{n-1}(t), & i=n.
    \end{aligned}
    \right.
    \label{systeme equations BD-Depoly tronc conserv}
\end{equation}
This truncation corresponds to $J_i = 0$ for $i\geq n$. Another truncation exists taking $J_n = a_nC_1C_n$ referred as 
non-conservative truncation which means that at the size of truncation, we are allowing this maximum size to coagulate 
and as before no aggregates of this size are the result of depolymerisation. \newline

To numerically simulate these previous system we can just use standard ODE schemes (e.g. Runge-Kutta 4), however choosing a large truncation $n$ may involve 
very long computation time. Therefore, the goal is to develop a coarse-grain scheme, consisting of sub-sampling the clusters at 
some given size, preserving the asymptotics of the system. 

The main idea is to see our truncation (\ref{systeme equations BD-Depoly tronc conserv})
as some discretisation of a particular PDE for some size step $\Delta x$, in which the size of the aggregates $x$ is a continuous 
variable. Let $n\geq 2$ an integer, the aforementioned PDE is the following:
\begin{equation}
    \frac{\partial C(t,x)}{\partial t} = -\frac{\partial J(t,x)}{\partial x}, \text{ in } [0,T]\times (1,n),
    \label{PDE type focker planck}
\end{equation}
with
\begin{equation}
    J(t,x) = -a(x)Q(x)C(t,1)^{x+1/2}\frac{\partial}{\partial x}\left(\frac{C(t,x)}{Q(x)C(t,1)^x}\right),
    \label{PDE flux J}
\end{equation}
where the functions $a(x),b(x)$ and $Q(x)$ are the step functions of the discrete quantities $a_i,b_i$ and $Q_i$. 
Formally, (\ref{PDE flux J}) is obtained through the following discretisation with $\Delta x = 1$
\begin{multline}
    J_i(t) = \frac{1}{\Delta x}\left(a_{i-1}C_1C_{i-1} - a_{i-1}\frac{Q_{i-1}}{Q_i}C_1\right) \\
    = -a_{i-1}Q_{i-1}C_1^i \frac{1}{\Delta x}\left(\frac{C_i}{Q_iC_1^i}-\frac{C_{i-1}}{Q_{i-1}C_1^{i-1}}\right)
        \approx J\left(t,i-\frac{1}{2}\right).
    \label{PDE flux J discretisation delta x = 1}
\end{multline}
To which we add boundary conditions, the right one depends on the truncation that we are considering and the left one comes from $C_1$. 
This idea is, in particular, presented by Duncan and Soheili \cite{duncan_approximating_2001}. Thus, taking another discretisation 
of the PDE, for example with a non-uniform mesh, and simulate the PDE on this mesh should approach the desired solution. 
To that end, we are going to use the idea presented by Goudon and Monasse \cite{goudon_fokker-planck_2020}. 
We mention the work of Bolton and Wattis \cite{bolton_general_2002}, in which they develop a coarse-grain scheme on the 
Becker-Döring equations.
Details are presented in section \ref{section : numerical simualations}.

\subsection{Open questions}

We highlight some questions that remain open. The uniqueness of a solution without the moment control, 
as in \cite[Theorem 2.1]{laurencot_beckerdoring_2002}, is open since no quantity is preserved. The long-time behaviour of 
the solution, except for our local stability result, remains open even in the sub-critical case, and especially when 
there exists an infinity of steady-states. The behaviour of the solution in the super-critical case is even more unclear 
since no steady-state exist.

\section{Existence} \label{section existence}

To prove \textbf{Theorem \ref{thm existence X_alpha}} we follow the idea of Laurençot from \cite{laurencot_discrete_2002}. To start, 
we introduce three sets of functions which are going to be useful later
\[ \mathcal{K}_1 := \left\{ U \in C^1([0,+\infty))\cap \Wrm^{2,\infty}_{loc}(0,+\infty)~:~U\geq 0,~U \text{ convex},~U(0)=0,~U'(0)\geq 0,\text{ et } U' \text{ concave} \right\}, \]
\[ \mathcal{K}_{1,\infty} := \left\{ U \in \mathcal{K}_1~:~\lim_{r\to +\infty} U'(r) = \lim_{r\to +\infty} \frac{U(r)}{r} = +\infty \right\}, \]
\[ \mathcal{K}_2 := \left\{ U \in \mathcal{C}^2([0,+\infty),\mathbb{R}_+) ~:~ U(0)=U'(0)=0~, U' \text{ convex satisfying the } \Delta_2-\text{condition}\right\},\]
with the $\Delta_2-$condition being
\begin{equation} 
    U'(2r)\leq k_UU'(r), ~ \forall r\geq 0, \text{ and some } k_U>0.
    \label{delta2 condition}
\end{equation}

\begin{rmk}
    The function $x\mapsto x^m$ belongs to $\mathcal{K}_1$ for $m\in [1,2]$, belongs to $\mathcal{K}_{1,\infty}$ 
    for $m\in (1,2]$ and belongs to $\mathcal{K}_2$ for $m\geq 2$. \newline
\end{rmk}

As mention in the beginning, we prove a more general result than \textbf{Theorem \ref{thm existence X_alpha}}, which consider 
propagation of general moments. It reads as follows:

\begin{thm}[\textbf{Existence and moment spreading}]
    Let $C^{init} \in X_\alpha^+$. Under the hypothesis \ref{borne a_i sous lineaire et positivite b_i}, 
    and assume that there exists $U\in \mathcal{K}_1\cup\mathcal{K}_2$ such that
    \begin{equation}
        \sum_{i=1}^{+\infty} U(i^\alpha)C_i^{init} < +\infty.
        \label{moment U init fini}
    \end{equation}
    Then there exists at least one solution $C \in \mathcal{C}^0([0,+\infty),X_\alpha^+)$ to (\ref{systeme equations BD-Depoly}) with initial data $C(0) = C^{init}$, and for each $T\in (0,+\infty),$
    \begin{equation}
        \sup_{t\in [0,T]}\sum_{i=1}^{+\infty} U(i^\alpha)C_i(t) < +\infty.
        \label{moment U fini}
    \end{equation}
    \label{thm existence X_alpha + moment U}
\end{thm}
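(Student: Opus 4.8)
The plan is to follow the classical truncation-plus-compactness scheme of Laurençot \cite{laurencot_discrete_2002} and Ball--Carr--Penrose \cite{ball_becker-doring_nodate}, adapted to our open system. First I would introduce the $n$-truncated system (the finite ODE system \eqref{systeme equations BD-Depoly tronc conserv}, or rather its non-conservative variant with $J_n = a_nC_1C_n$ so that the truncated dynamics still inject and fragment cleanly), together with truncated initial data $C^{init,n}$ obtained by cutting off $C^{init}$ at size $n$. Since \ref{borne a_i sous lineaire et positivite b_i} gives locally Lipschitz right-hand sides on the finite-dimensional positive orthant, Picard--Lindelöf yields a unique local solution $C^n$; positivity is preserved because whenever a coordinate $C^n_i$ hits $0$ its derivative is non-negative (using $\lambda \geq 0$, $a_i \geq 0$, $b_i \geq 0$), and a crude $\ell^1$-type a priori bound — testing with $\varphi_i = 1$ in the truncated weak form, giving $\frac{d}{dt}\sum_i C^n_i \leq \lambda$ — prevents finite-time blow-up, so $C^n$ is global.

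The heart of the matter is the uniform moment estimate. I would work with the truncated weak form (the analogue of \eqref{weak form 1}) applied to $\varphi_i = U(i^\alpha)$ for $U \in \mathcal{K}_1 \cup \mathcal{K}_2$, and control the three structural terms. The injection term contributes only $\lambda U(1)$, a harmless constant. The fragmentation term $\sum_{i\geq 3}[U((i-1)^\alpha) - U(i^\alpha)]b_i C^n_i$ is $\leq 0$ since $U$ is nondecreasing and $(i-1)^\alpha \leq i^\alpha$ for $\alpha \in [0,1]$ — this is exactly why irreversible fragmentation is benign for upper moment bounds, and it is actually simpler than in the reversible Becker--Döring case. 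The genuinely delicate term is the coagulation contribution $\sum_i [U((i+1)^\alpha) - U(i^\alpha) - U(1)]a_i C^n_1 C^n_i$; here one uses convexity/concavity of $U$ and $U'$ (for $U \in \mathcal{K}_1$) or the $\Delta_2$-condition (for $U \in \mathcal{K}_2$), together with $a_i \leq a i^\alpha$ and the mean value theorem, to bound $U((i+1)^\alpha) - U(i^\alpha) \lesssim U'(i^\alpha)\cdot(\text{something }\lesssim i^{\alpha-1})$, and then the superlinearity built into $\mathcal{K}_{1,\infty}$ or the $\Delta_2$-structure lets one absorb this into $U(i^\alpha)$ itself up to lower-order terms. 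Combined with the uniform-in-$n$ bound $C^n_1(t) \leq \kappa(\lambda, C^{init})$ from \eqref{UIT bound of C_1} (which transfers to the truncated system directly from its $C_1$ equation), this gives a differential inequality $\frac{d}{dt}\sum_i U(i^\alpha)C^n_i \leq A + B\sum_i U(i^\alpha)C^n_i$ with $A,B$ independent of $n$, hence by Grönwall a bound uniform in $n$ and locally uniform in $t$. Applying this first with $U(r) = r^{1/\alpha}\cdot$(a fixed function in $\mathcal{K}_{1,\infty}$), or more simply observing $X_\alpha$-boundedness follows from a superlinear refinement, gives equi-integrability of the tails $\sum_{i \geq N} i^\alpha C^n_i(t)$, uniformly in $n$ and $t \in [0,T]$.

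With these estimates in hand, the compactness/limit passage is routine. The pointwise bounds on each $C^n_i$ and on $\frac{d}{dt}C^n_i$ give, via Arzelà--Ascoli and a diagonal extraction, a subsequence converging in $\mathcal{C}^0([0,T])$ for every fixed $i$ to some limit $C_i \geq 0$; the uniform tail control upgrades this to convergence in $\mathcal{C}^0([0,T], X_\alpha)$ and ensures $\sum_j a_j C^n_j \to \sum_j a_j C_j$ in an $L^1_{loc}$-in-time sense (again using $a_j \leq a j^\alpha$ and equi-integrability), so one may pass to the limit in the integral formulation of Definition \ref{defi solution BD-Depoly}, obtaining a genuine solution; the moment bound \eqref{moment U fini} survives the limit by Fatou. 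Finally, \textbf{Theorem \ref{thm existence X_alpha}} follows by specializing to $U(r) = r^{\mu/\alpha}$ (noting $x \mapsto x^m \in \mathcal{K}_1$ for $m \in [1,2]$, $\in \mathcal{K}_2$ for $m \geq 2$, so every $\mu \geq \alpha$ is covered) and running the same uniform bound on the already-constructed solution rather than re-extracting. The main obstacle I anticipate is purely the coagulation moment estimate: getting the constants in the differential inequality genuinely independent of the truncation parameter $n$ while handling the boundary terms $i = n-1, n$ of the truncated system (where the telescoping that kills the fragmentation term is slightly disrupted), and correctly invoking the $\Delta_2$-condition versus the concavity of $U'$ in the two cases $\mathcal{K}_2$ and $\mathcal{K}_1$ — this is where one must be careful, everything else is standard soft analysis.
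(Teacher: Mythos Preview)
Your plan follows the same architecture as the paper: truncate, establish uniform moment bounds via the weak form and Gr\"onwall, extract a limit by compactness, pass to the limit in the integral equations. The paper's handling of the coagulation term uses precisely the inequality you are reaching for, namely Lauren\c{c}ot's \cite[Lemma~3.2]{laurencot_discrete_2002}: for $U\in\mathcal{K}_1\cup\mathcal{K}_2$ there is $m_U>0$ with $(i^\alpha+1)\bigl(U(i^\alpha+1)-U(i^\alpha)-U(1)\bigr)\leq m_U\bigl(i^\alpha U(1)+U(i^\alpha)\bigr)$. The paper uses the conservative truncation and Helly's selection theorem (from the $W^{1,1}$ bound of Lemma~\ref{lemme existence 2}) rather than Arzel\`a--Ascoli, but your variants are fine; the boundary terms at $i=n$ that you flag as a potential obstacle contribute with a favourable sign in the moment inequality.

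There is one genuine gap. To pass to the limit in the nonlinear term $\sum_j a_j C_1^n C_j^n$ you need uniform tail control $\sup_{n,\,t\leq T}\sum_{i\geq N} i^\alpha C_i^n(t)\to 0$ as $N\to\infty$, and for this you need a moment with weight $U_0\in\mathcal{K}_{1,\infty}$, i.e.\ $U_0(r)/r\to\infty$. The $U$ furnished by the hypothesis need not lie in $\mathcal{K}_{1,\infty}$: for instance $U(r)=r$ is in $\mathcal{K}_1$ and merely reproduces the $X_\alpha$-bound, giving no extra equi-integrability. Your ``fixed function in $\mathcal{K}_{1,\infty}$'' cannot be chosen in advance --- it must be adapted to $C^{init}$ so that $\sum_i U_0(i^\alpha)C_i^{init}<\infty$, and the ad~hoc product $r^{1/\alpha}\cdot(\ldots)$ neither stays in $\mathcal{K}_1\cup\mathcal{K}_2$ in general nor guarantees this finiteness. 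The paper produces $U_0$ by the refined de~La~Vall\'ee--Poussin theorem (Theorem~\ref{thm vallee-poussin amelio}), applied to $i\mapsto i^\alpha$ as an element of $L^1(\mathbb{N}^\ast,\mu)$ with $\mu(\{i\})=C_i^{init}$; this is the hinge of Lauren\c{c}ot's method and you should name it explicitly. Once $U_0$ is in hand, your remaining steps (and the survival of the $U$-moment bound via Fatou) go through as you describe.
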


\noindent \textbf{Theorem \ref{thm existence X_alpha}} follows by taking $U(x)=x^m$ for $m\geq 1$.

\subsection{Preliminary results}\label{subsection existence prelim}

As in many works on similar coagulation-fragmentation equations (e.g. \cite{ball_becker-doring_nodate}, \cite{ball_discrete_1990}, 
\cite{carr_asymptotic_1992} or \cite{carr_asymptotic_1994}), the existence of a solution to (\ref{systeme equations BD-Depoly}) 
satisfying $C(0)=C^{init}$ comes by taking the limit of solutions to a finite number system of differential 
equations obtained by truncating our system of equations (\ref{systeme equations BD-Depoly}). More precisely, let $n\geq 3$, 
and recall the system of $n$ ordinary differential equations (\ref{systeme equations BD-Depoly tronc conserv}), to which we 
still add the initial condition 
\begin{equation}
    C_i^n(0) = C_i^{init} \geq 0, ~ 1\leq i\leq n.
    \label{systeme equations BD-Depoly tronqué cond initiale}
\end{equation}

\noindent As in the infinite system, we can write the weak form of this system of equations. It reads as follows
\begin{equation}
    \frac{d}{dt}\sum_{i=1}^{n} \varphi_iC_i = \lambda\varphi_1 - \varphi_2b_2C_2 + \sum_{i=1}^{n-1} \left[\varphi_{i+1}-\varphi_i-\varphi_1\right]a_iC_1C_i + \sum_{i=3}^{n} \left[\varphi_{i-1}-\varphi_i\right]b_iC_i.
    \label{weak form 1 tronqué}
\end{equation}
\vspace{0.2cm}

To start, we prove the well-posedness of the truncated system of equations (\ref{systeme equations BD-Depoly tronc conserv}).
\begin{prop}
    Let $n\geq 3$ and $C^{init}\in X_\alpha^+$. There exists a unique solution $C^n \in \mathcal{C}^1([0,+\infty),\mathbb{R}^n)$ to
    (\ref{systeme equations BD-Depoly tronc conserv}) satisfying $C_i^n(0)=C_i^{init}$ for $i=1,\ldots,n$. This solution satisfies
    $C_i^n(t) \geq 0$ for all $t\geq 0$ and $i=1,\ldots,n$ and 
    \begin{equation}
        C_i^n(t) \leq \sum_{i=1}^n i^\alpha C_i^n(t) \leq \lambda T + \|C^{init}\|_{X_\alpha^+}
        \label{borne C_i^n par borne cond init Xalpha}
    \end{equation}
    for all $T\in (0,+\infty)$ and $i=1,\ldots,n$.
    \label{prop existence solution systeme tronqué}
\end{prop}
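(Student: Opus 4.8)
The plan is to regard the truncated system (\ref{systeme equations BD-Depoly tronc conserv}) as an autonomous ODE $\dot C = F_n(C)$ on $\mathbb{R}^n$, whose right-hand side $F_n$ is a (vector-valued) quadratic polynomial in $C$, hence $\mathcal{C}^\infty$ and, in particular, locally Lipschitz. The Cauchy--Lipschitz theorem then provides a unique maximal solution $C^n$ with $C^n(0)=C^{init}$, defined on some maximal interval $[0,T_{\max})$ and of class $\mathcal{C}^1$ (in fact $\mathcal{C}^\infty$, by bootstrapping the smoothness of $F_n$). It remains to establish non-negativity, the uniform bound (\ref{borne C_i^n par borne cond init Xalpha}), and $T_{\max}=+\infty$.

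For positivity I would use a quasi-positivity argument: one checks that $F_n$ points into the non-negative orthant along its boundary. Indeed, if $C\ge 0$ with $C_{i_0}=0$, then the $i_0$-th component of $F_n(C)$ equals $\lambda\ge 0$ for $i_0=1$; equals $a_{i_0-1}C_1C_{i_0-1}+b_{i_0+1}C_{i_0+1}\ge 0$ for $1<i_0<n$; and equals $a_{n-1}C_1C_{n-1}\ge 0$ for $i_0=n$. By the standard invariance criterion for locally Lipschitz vector fields this makes $(\mathbb{R}_+)^n$ positively invariant; alternatively, replacing $\lambda$ by $\lambda+\varepsilon$ makes the field point strictly inward on every boundary face it meets, which forces the perturbed solution to remain strictly positive, and one then lets $\varepsilon\to 0^+$ using continuous dependence on parameters. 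In either case $C_i^n(t)\ge 0$ on $[0,T_{\max})$.

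For the bound I would feed $\varphi_i=i^\alpha$ into the weak form (\ref{weak form 1 tronqué}), legitimate since the sum is finite. The term $-2^\alpha b_2 C_2$ is $\le 0$; each fragmentation weight $(i-1)^\alpha-i^\alpha$ is $\le 0$ and multiplies $b_iC_i\ge 0$; and since $\alpha\in[0,1]$ the map $r\mapsto r^\alpha$ is subadditive, so $(i+1)^\alpha-i^\alpha-1\le 0$, a weight multiplying $a_iC_1C_i\ge 0$ (here positivity is used). Hence
\begin{equation*}
    \frac{d}{dt}\sum_{i=1}^{n} i^\alpha C_i^n(t) \le \lambda ,
\end{equation*}
and integration from $0$ gives $\sum_{i=1}^n i^\alpha C_i^n(t)\le \|C^{init}\|_{X_\alpha^+}+\lambda t$. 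Since $i^\alpha\ge 1$ and $C_i^n\ge 0$, we have $C_i^n(t)\le\sum_{j=1}^n j^\alpha C_j^n(t)$, which yields (\ref{borne C_i^n par borne cond init Xalpha}) for every $T\in(0,+\infty)$.

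Finally, this bound shows $C^n$ remains in a fixed compact subset of $\mathbb{R}^n$ on each compact subinterval of $[0,T_{\max})$, so the blow-up alternative forces $T_{\max}=+\infty$, completing the proof. The only step that is not entirely routine is the rigorous justification of invariance of the non-negative orthant in the positivity claim; I expect to handle it via the $\varepsilon$-perturbation trick indicated above, all the rest being elementary.
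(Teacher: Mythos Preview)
Your proposal is correct and follows essentially the same route as the paper: Cauchy--Lipschitz for local existence, quasi-positivity of $F_n$ for the invariance of the non-negative orthant (the paper simply cites an external positivity theorem for this step), and the weak form with $\varphi_i=i^\alpha$ together with subadditivity of $r\mapsto r^\alpha$ on $[0,1]$ to obtain the bound and hence global existence. The only cosmetic difference is that the paper first uses the $\alpha=0$ case $\frac{d}{dt}\sum_i C_i^n\le\lambda$ to rule out blow-up and then derives the $i^\alpha$-moment estimate, whereas you do both in one stroke with the $i^\alpha$ weight.
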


\begin{proof}
    The proof is a straight forward application of the Cauchy-Lipschitz theorem. The non-negativity follows form \cite[Theorem 7.1 (Positivity)]{ducrot_griette}.
    The fact that 
    \[ \frac{d}{dt}\sum_{i=1}^n C_i^n(t) \leq \lambda \]
    entails non-explosion in finite time. Moreover, using (\ref{weak form 1 tronqué}) with $\varphi_i = i^\alpha$ and since $\alpha \in [0,1]$,
    we obtain
    \begin{align*}
        \frac{d}{dt}\sum_{i=1}^n i^\alpha C_i^n &= \lambda - 2^\alpha b_2C_2^n + \sum_{i=1}^{n-1}\left[(i+1)^\alpha-i^\alpha-1\right]a_iC_i^nC_1^n + \sum_{i=3}^n\left[(i-1)^\alpha-i^\alpha\right]b_iC_i^n \leq \lambda,
    \end{align*}
    which entails (\ref{borne C_i^n par borne cond init Xalpha}).
\end{proof}

Our objective is now to show that we can extract from the sequence $(C^n)_{n\geq 1}$, even if we take $C_i^n = 0$ for $i>n$, 
a sub-sequence which converges, and the limit is a solution, i.e. satisfies the \textbf{Definition \ref{defi solution BD-Depoly}}. 
In order to do that, we need to establish the following two lemmas.

\begin{lem}
    Let $T\in(0,+\infty)$ and $U\in\mathcal{K}_1\cup\mathcal{K}_2$. There exists a constant $\gamma_T$ depending only on $a, U, C^{init}$ and $T$ such that for all $n\geq 3$, and $t \in [0,T]$, we have
    \begin{equation}
        \sum_{i=1}^n U(i^\alpha)C_i^n(t) \leq \gamma_T \sum_{i=1}^n U(i^\alpha)C_i^{init} + \lambda U(1)T\gamma_T
    \end{equation}
    \label{lemme existence 1}
\end{lem}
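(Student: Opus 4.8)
The plan is to use the weak formulation (\ref{weak form 1 tronqué}) with the choice $\varphi_i = U(i^\alpha)$, controlling each of the three sums on the right-hand side using the structural properties of the classes $\mathcal{K}_1$ and $\mathcal{K}_2$, and then to close the argument with Gr\"onwall's lemma. Concretely, I would write
\[
\frac{d}{dt}\sum_{i=1}^n U(i^\alpha)C_i^n = \lambda U(1) - U(2^\alpha)b_2 C_2^n + \sum_{i=1}^{n-1}\bigl[U((i+1)^\alpha)-U(i^\alpha)-U(1)\bigr]a_i C_1^n C_i^n + \sum_{i=3}^n\bigl[U((i-1)^\alpha)-U(i^\alpha)\bigr]b_i C_i^n.
\]
The fragmentation sum $\sum_{i=3}^n [U((i-1)^\alpha)-U(i^\alpha)]b_i C_i^n$ is $\le 0$ because $U$ is nondecreasing on $[0,+\infty)$ (for $\mathcal{K}_1$, $U$ convex with $U'(0)\ge 0$; for $\mathcal{K}_2$, $U'(0)=0$ and $U'$ convex hence nonnegative) and $i\mapsto i^\alpha$ is nondecreasing; likewise $-U(2^\alpha)b_2 C_2^n \le 0$. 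So both can simply be dropped, leaving only $\lambda U(1)$ and the coagulation term to bound from above.

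For the coagulation term the key is to estimate the increment $U((i+1)^\alpha)-U(i^\alpha)$. Since $\alpha\in[0,1]$, concavity of $r\mapsto r^\alpha$ gives $(i+1)^\alpha - i^\alpha \le \alpha i^{\alpha-1}\le 1$, and more usefully $(i+1)^\alpha \le i^\alpha + 1$; combined with $C_1^n \le \lambda T + \|C^{init}\|_{X_\alpha^+}$ from (\ref{borne C_i^n par borne cond init Xalpha}) and $a_i \le a i^\alpha$ from \ref{borne a_i sous lineaire et positivite b_i}, I want a bound of the form $[U((i+1)^\alpha)-U(i^\alpha)]a_i C_1^n \le c_T\, U(i^\alpha)$ for a constant $c_T$ depending on $a$, $T$, $C^{init}$ and $U$. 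This is exactly where the two subclasses are handled differently: for $U\in\mathcal{K}_1$, convexity of $U$ and concavity of $U'$ give $U((i+1)^\alpha)-U(i^\alpha)\le U'((i+1)^\alpha)\cdot\bigl((i+1)^\alpha-i^\alpha\bigr)\le U'(i^\alpha+1)$, and concavity of $U'$ together with $U'(0)\ge 0$ yields a linear-in-$U'(i^\alpha)$ control, after which convexity ($U(r)\ge U'(r_0)(r-r_0)+U(r_0)$, or rather $rU'(r)\ge U(r)$ type inequalities) converts $i^\alpha U'(i^\alpha)$ into a multiple of $U(i^\alpha)$ plus a constant; for $U\in\mathcal{K}_2$ the $\Delta_2$-condition (\ref{delta2 condition}) is the substitute: $U'$ convex with $U'(0)=0$ gives superadditivity-type estimates and $U'(2r)\le k_U U'(r)$ lets one absorb the shift $i^\alpha \mapsto (i+1)^\alpha \le 2 i^\alpha$, again producing $c_T U(i^\alpha)$ up to an additive constant times $C_i^n$ which is itself harmless since $\sum C_i^n$ is bounded. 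I would treat the finitely many small indices ($i=1,2$, or wherever $(i+1)^\alpha\not\le 2i^\alpha$) separately by hand.

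Having obtained, for both cases, an inequality
\[
\frac{d}{dt}\sum_{i=1}^n U(i^\alpha)C_i^n \le \lambda U(1) + c_T \sum_{i=1}^n U(i^\alpha)C_i^n + d_T,
\]
with $c_T, d_T$ independent of $n$ (the $d_T$ coming from the finitely many exceptional indices, bounded via (\ref{borne C_i^n par borne cond init Xalpha})), Gr\"onwall's lemma over $[0,T]$ gives
\[
\sum_{i=1}^n U(i^\alpha)C_i^n(t) \le e^{c_T T}\Bigl(\sum_{i=1}^n U(i^\alpha)C_i^{init} + (\lambda U(1) + d_T)T\Bigr),
\]
which is of the claimed form with $\gamma_T = e^{c_T T}$ after possibly enlarging $\gamma_T$ to absorb $d_T T$ into the $\lambda U(1)T\gamma_T$ term (or, more honestly, one notes $d_T$ is itself proportional to $\lambda T + \|C^{init}\|_{X_\alpha^+}$ and can be folded in). The main obstacle is the coagulation estimate: getting the increment $U((i+1)^\alpha)-U(i^\alpha)$ bounded by $c_T U(i^\alpha)$ uniformly in $i$ is the crux, and it is precisely why the hypotheses on $\mathcal{K}_1$ (convex, $U'$ concave, $U'(0)\ge 0$) and $\mathcal{K}_2$ ($U'$ convex, $\Delta_2$) are imposed — the rest is bookkeeping with Gr\"onwall and the already-established $\ell^1$ and $X_\alpha$ bounds.
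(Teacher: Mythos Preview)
Your overall architecture --- apply the truncated weak form (\ref{weak form 1 tronqué}) with $\varphi_i=U(i^\alpha)$, discard the nonpositive fragmentation terms, bound the coagulation increment, and close with Gr\"onwall --- is exactly what the paper does. The only substantive difference is in the coagulation estimate: the paper does not split into the two cases $\mathcal{K}_1$ and $\mathcal{K}_2$ but instead observes $(i+1)^\alpha\le i^\alpha+1$ and then invokes a single inequality from \cite[Lemma 3.2]{laurencot_discrete_2002}, namely
\[
(i^\alpha+1)\bigl(U(i^\alpha+1)-U(i^\alpha)-U(1)\bigr)\le m_U\bigl(i^\alpha U(1)+U(i^\alpha)\bigr),
\]
valid for all $U\in\mathcal{K}_1\cup\mathcal{K}_2$. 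Combined with $a_i\le a(i^\alpha+1)$ and $C_1^n U(1)\le M_U^n$, $C_1^n\le M_\alpha^n$, this yields directly $\frac{d}{dt}M_U^n\le \lambda U(1)+2am_U(\lambda T+\|C^{init}\|_{X_\alpha})M_U^n$, with no $d_T$ term and no exceptional indices to treat separately.

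One point in your sketch is stated the wrong way round: you write ``$rU'(r)\ge U(r)$ type inequalities'' to convert $i^\alpha U'(i^\alpha)$ into a multiple of $U(i^\alpha)$, but what you actually need is $rU'(r)\le cU(r)$. This does hold --- for $U\in\mathcal{K}_1$, concavity of $U'$ and $U'(0)\ge 0$ give $U'(tr)\ge tU'(r)$, hence $U(r)=r\int_0^1 U'(tr)\,dt\ge \tfrac{1}{2}rU'(r)$; for $U\in\mathcal{K}_2$, the $\Delta_2$-condition gives $U(r)\ge \int_{r/2}^r U'\ge \tfrac{r}{2}U'(r/2)\ge \tfrac{r}{2k_U}U'(r)$ --- so your route can be made to work, but the inequality you quoted points the wrong direction. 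Also, $(i+1)^\alpha\le 2i^\alpha$ holds for every $i\ge 1$, so no exceptional indices arise and your $d_T$ term is unnecessary.
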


\begin{proof}
    Let $n\geq 3$ and $M_U^n(t) := \sum_{i=1}^n U(i^\alpha)C_i^n(t)$. We have for $t\geq 0$
    \begin{align*}
        \frac{d}{dt}M_U^n(t) &\leq \lambda U(1) - U(2^\alpha)b_2C_2^n(t) + \sum_{i=3}^{n} \left[U((i-1)^\alpha)-U(i^\alpha)\right]b_{i}C_{i}^n(t) \\
        &~~+ \sum_{i=1}^{n-1}\left[U((i+1)^\alpha)-U(i^\alpha)-U(1)\right]a_{i}C_i^n(t)C_1^n(t)
    \end{align*}
    Noticing, for all $i\geq 1$ and $\alpha \in [0,1]$, $ (i+1)^\alpha \leq i^\alpha + 1$,
    and as $U \in \mathcal{K}_1\cup\mathcal{K}_2$, $U$ is increasing, thus $U((i+1)^\alpha) \leq U(i^\alpha + 1)$ for all $i\geq 1$.
    Moreover, from \cite[Lemma 3.2]{laurencot_discrete_2002} there exists a constant $m_U$ depending only on $U$ such that for all $i\geq 1$
    \begin{equation}
        (i^\alpha + 1)(U(i^\alpha + 1)-U(i^\alpha)-U(1)) \leq m_U(i^\alpha U(1) + U(i^\alpha)).
        \label{inequalite lemme laurencot}
    \end{equation}
    Therefore by \ref{borne a_i sous lineaire et positivite b_i} and (\ref{inequalite lemme laurencot}), we obtain
    \begin{align*}
        \frac{d}{dt}M_U^n(t) &\leq \lambda U(1) - U(2^\alpha)b_2C_2^n(t) + \sum_{i=3}^{n} \left[U((i-1)^\alpha)-U(i^\alpha)\right]b_{i}C_{i}^n(t) \\
        &~~+ am_U\sum_{i=1}^{n-1}(i^\alpha U(1)+U(i^\alpha))C_i^n(t)C_1^n(t),
    \end{align*}
    with $a$ respectively $m_U$, the constant from \ref{borne a_i sous lineaire et positivite b_i} respectively (\ref{inequalite lemme laurencot}).
    As, $U$ is increasing, $U((i-1)^\alpha)-U(i^\alpha) \leq 0$ for all $i\geq 1$, thus
    \begin{align*}
        \frac{d}{dt}M_U^n(t) &\leq \lambda U(1) + 2 a m_U M_\alpha^n(t) M_U^n(t) \\
        &\leq \lambda U(1) + 2a m_U(\lambda T + \|C^{init}\|_{X_\alpha})M_U^n(t)
    \end{align*}
    Therefore, using Grönwall's lemma, we conclude
    \begin{equation*}
        M_U^n(t) \leq  (M_U^n(0) + \lambda U(1)T)e^{2a m_U(\lambda T + \|C^{init}\|_{X_\alpha})T}
    \end{equation*}
    which ends the proof. \newline
\end{proof}

\begin{lem}
    Let $T \in (0,+\infty)$. For each $i\geq 1$, there exists a constant $\Gamma_i(T)$ depending only on $a, i, \|C^{init}\|_{X_\alpha}$ et $T$ such that, for all $n\geq i$,
    \begin{equation}
        \left\| \frac{d}{dt}C_i^n \right\|_{\Lrm^1(0,T)} \leq \Gamma_i(T).
    \label{borne dérivée solution tronquée}
    \end{equation}
    \label{lemme existence 2}
\end{lem}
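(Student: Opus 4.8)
The idea is to bound $\|\frac{d}{dt}C_i^n\|_{\mathrm{L}^1(0,T)}$ by integrating the absolute value of the right-hand side of the truncated ODEs, using the previously established uniform bounds. First I would treat $i=1$ separately: from the first equation of (\ref{systeme equations BD-Depoly tronc conserv}), $|\frac{d}{dt}C_1^n| \le \lambda + C_1^n\sum_{j=1}^{n-1}a_jC_j^n + a_1(C_1^n)^2$. Using \ref{borne a_i sous lineaire et positivite b_i} we have $\sum_{j=1}^{n-1}a_jC_j^n \le a\sum_{j=1}^{n-1}j^\alpha C_j^n \le a(\lambda T + \|C^{init}\|_{X_\alpha})$ by (\ref{borne C_i^n par borne cond init Xalpha}), and $C_1^n \le \lambda T + \|C^{init}\|_{X_\alpha}$ as well; hence the integrand is bounded by a constant depending only on $a,\lambda,\|C^{init}\|_{X_\alpha},T$, and integrating over $[0,T]$ gives $\Gamma_1(T)$.

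For $i\ge 2$, from $\frac{d}{dt}C_i^n = J_{i-1}^n - J_i^n$ with $J_k^n = a_kC_1^nC_k^n - b_{k+1}C_{k+1}^n$ (and $J_n^n=0$ in the conservative truncation), I would estimate
\[
\left|\frac{d}{dt}C_i^n\right| \le a_{i-1}C_1^nC_{i-1}^n + b_iC_i^n + a_iC_1^nC_i^n + b_{i+1}C_{i+1}^n.
\]
Each term on the right is controlled: the coagulation terms $a_kC_1^nC_k^n$ are bounded using $a_k \le ak^\alpha$, the uniform bound $C_1^n \le \lambda T + \|C^{init}\|_{X_\alpha}$, and $C_k^n \le \lambda T + \|C^{init}\|_{X_\alpha}$, all from Proposition 3.2; the fragmentation terms $b_kC_k^n$ are slightly more delicate because \ref{borne a_i sous lineaire et positivite b_i} gives no upper bound on $b_k$, but since $k\in\{i,i+1\}$ is fixed, $b_i$ and $b_{i+1}$ are just fixed (finite) constants, so $b_kC_k^n \le b_k(\lambda T + \|C^{init}\|_{X_\alpha})$ is a constant depending on $i$. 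Thus the integrand is bounded by a constant depending only on $a,i,\lambda,\|C^{init}\|_{X_\alpha},T$, and integrating over $[0,T]$ yields $\Gamma_i(T)$.

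The only mild subtlety — and the reason the constant $\Gamma_i(T)$ is allowed to depend on $i$ — is precisely the absence of a uniform upper bound on $b_i$ under hypothesis \ref{borne a_i sous lineaire et positivite b_i} alone; one must keep $i$ fixed and absorb $b_i,b_{i+1}$ into the constant rather than trying for an $i$-independent estimate. Otherwise the proof is entirely routine: it is a direct term-by-term majorisation of the ODE right-hand side followed by integration, relying only on the bounds of Proposition \ref{prop existence solution systeme tronqué}. I would write out the $i\ge 2$ case in a couple of lines and remark that $i=1$ is handled analogously.
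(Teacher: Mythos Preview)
Your proposal is correct and follows essentially the same approach as the paper: bound the right-hand side of the truncated ODEs term by term using the uniform estimate $C_i^n(t)\le K_T:=\lambda T+\|C^{init}\|_{X_\alpha}$ from Proposition~\ref{prop existence solution systeme tronqué}, together with $a_j\le aj^\alpha$ for the sum in the $i=1$ equation. The paper's proof is terser (it handles $i\ge 2$ in one sentence and then writes out the $i=1$ bound), but the content is identical; your explicit remark that $b_i,b_{i+1}$ enter the constant only through the fixed index $i$---since \ref{borne a_i sous lineaire et positivite b_i} gives no upper bound on $b_i$---is exactly the reason the constant $\Gamma_i(T)$ is allowed to depend on $i$.
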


\begin{proof}
    From \textbf{Proposition \ref{prop existence solution systeme tronqué}}, there exists a constant $K_T$ such that 
    $\left| C_i^n(t)\right| \leq K_T$ for all $i\geq 1$. Hence, for $i\geq 2$, $\left|\dfrac{d}{dt}C_i^n(t)\right|$ is uniformly bounded in $\Lrm^1(0,T)$ 
    according to $n$, thanks to equations (\ref{systeme equations BD-Depoly tronc conserv}). It remains to bound $\left|\dfrac{d}{dt}C_1^n(t)\right|$, 
    which follows from the bound
    \begin{equation*}
        \int_0^T \sum_{i=1}^{n-1}a_iC_1^nC_i^n \leq K_Ta \int_0^T\sum_{i=1}^n i^\alpha C_i^n \leq TaK_T^2.
    \end{equation*}
\end{proof}

\subsection{Proof of existence theorem}

We are now in a position to prove \textbf{Theorem \ref{thm existence X_alpha + moment U}}. 
However, let's not forget a refined version of the de La Vallée-Poussin theorem for integrable functions from \cite[Proposition I.1.1]{ChauHoan}, 
which we are going to need. That is the key point of Laurençot's proof in \cite[Theorem 2.3 and Theorem 2.5]{laurencot_discrete_2002} that enables him to 
provide a different kind of proof without the need for cleverly decomposing the sums as in \cite[Theorem 2.4 and Theorem 2.5]{ball_discrete_1990}.

\begin{thm}[\textbf{de La Vallée-Poussin}]
    Let $\left(\Omega,\mathcal{B},\mu\right)$ a measured space, and let $w\in \Lrm^1(\Omega,\mathcal{B},\mu)$. Then there exists a function $V \in \mathcal{K}_{1,\infty}$ such that
    \[ V(|w|) \in \Lrm^1(\Omega,\mathcal{B},\mu). \]
    \label{thm vallee-poussin amelio}
\end{thm}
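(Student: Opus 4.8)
The plan is to dispose of the bounded case at once and, in general, to construct $V$ by prescribing its derivative as a suitably chosen concave piecewise-linear function adapted to the distribution function of $w$. If $w\in\Lrm^\infty(\Omega,\mathcal B,\mu)$, I take $V(r)=r^2$, which lies in $\mathcal K_{1,\infty}$ (its derivative $r\mapsto 2r$ is affine, hence concave, and the growth conditions are plain); then $V(|w|)=|w|^2\le\|w\|_{\Lrm^\infty}\,|w|\in\Lrm^1$, and we are done. So assume henceforth $w\notin\Lrm^\infty$.

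Introduce the distribution function $F(t):=\mu(\{|w|>t\})$ for $t\ge 0$: it is nonincreasing, satisfies $F(t)\le\|w\|_{\Lrm^1}/t$ by Markov, has $\int_0^{+\infty}F(t)\,dt=\|w\|_{\Lrm^1}<+\infty$ by the layer-cake formula, tends to $0$ at $+\infty$, and is everywhere positive since $w\notin\Lrm^\infty$. Put $\phi(t):=\int_t^{+\infty}F(s)\,ds$, a continuous, strictly decreasing, positive function with $\phi(0)=\|w\|_{\Lrm^1}$ and $\phi(t)\to 0$. For $n\ge 1$ set $s_n:=\inf\{t\ge 0:\phi(t)\le 2^{-n}\}$; the $s_n$ are nondecreasing, eventually positive, and tend to $+\infty$. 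I then build breakpoints $0=\tau_0<\tau_1<\tau_2<\cdots$ with nondecreasing gaps which dominate the $s_n$: with $n_1:=\min\{n:s_n>0\}$, set $\tau_1:=s_{n_1}$ and recursively $\tau_{k+1}:=\max\big(s_{n_1+k},\,2\tau_k-\tau_{k-1}\big)$, so that $\tau_{k+1}-\tau_k\ge\tau_k-\tau_{k-1}>0$, hence $\tau_k\to+\infty$ and $\phi(\tau_k)\le 2^{-(n_1+k-1)}$ for $k\ge 1$. Let $g:[0,+\infty)\to[0,+\infty)$ be continuous, affine on each $[\tau_k,\tau_{k+1}]$, with $g(\tau_k)=k$; its slopes $1/(\tau_{k+1}-\tau_k)$ are positive and nonincreasing, so $g$ is nondecreasing, concave, $g(0)=0$, and $g(t)\to+\infty$. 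Finally define $V(r):=\int_0^r g(s)\,ds$.

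Two verifications remain. That $V\in\mathcal K_{1,\infty}$ is immediate: $V\in C^1$ with $V'=g$, $V''=g'$ exists a.e. and is piecewise constant (indeed bounded, the slopes decreasing), so $V\in\Wrm^{2,\infty}_{loc}(0,+\infty)$; moreover $V\ge 0$, $V(0)=0$, $V'(0)=g(0)=0\ge 0$, $V$ is convex ($g$ nondecreasing), $V'$ is concave ($g$ concave), $V'(r)=g(r)\to+\infty$, and $V(r)/r\ge\frac1r\int_{r/2}^r g\ge\frac12 g(r/2)\to+\infty$. For integrability, Tonelli and the layer-cake formula give
\[
\int_\Omega V(|w|)\,d\mu=\int_0^{+\infty}g(t)\,F(t)\,dt=\int_0^{\tau_1}g\,F+\sum_{k\ge 1}\int_{\tau_k}^{\tau_{k+1}}g\,F,
\]
and since $g$ is nondecreasing with $g(\tau_{k+1})=k+1$ while $\int_{\tau_k}^{\tau_{k+1}}F\le\phi(\tau_k)$, each summand is at most $(k+1)\phi(\tau_k)\le(k+1)2^{-(n_1+k-1)}$, and the first term is at most $g(\tau_1)\int_0^{+\infty}F=\|w\|_{\Lrm^1}$; summing the convergent series yields $\int_\Omega V(|w|)\,d\mu<+\infty$.

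The delicate point — the only thing beyond the classical de La Vallée-Poussin statement, which does not by itself deliver $V'$ concave — is reconciling two competing demands on $g=V'$: concavity forces $g$ to be piecewise-linear with nondecreasing gaps between breakpoints, yet $g$ must still grow fast enough that $\int_0^{+\infty}gF$ converges even when $F$ decays only marginally faster than $1/t$. A fixed slowly growing concave profile (say $g=\log$) does not suffice in general, so $g$ must be tuned to $w$ through the tail function $\phi$; the recursion $\tau_{k+1}=\max(s_{n_1+k},\,2\tau_k-\tau_{k-1})$ is exactly what enforces ``gaps nondecreasing'' (hence concavity) while keeping $\phi(\tau_k)$ summable against the linear weights $g(\tau_{k+1})=k+1$.
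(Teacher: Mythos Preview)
Your proof is correct and complete. Note, however, that the paper does not actually prove this statement: it is quoted as a refined de~La~Vall\'ee-Poussin theorem and attributed to \cite[Proposition~I.1.1]{ChauHoan}, with no argument given in the paper itself. So there is nothing to compare your approach to within the paper; you have supplied a self-contained proof where the paper relies on an external reference.

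Your construction is the natural one: the only nontrivial constraint beyond the classical de~La~Vall\'ee-Poussin theorem is the concavity of $V'$, and you handle this cleanly by building $g=V'$ as a piecewise-linear function with breakpoints $\tau_k$ chosen so that the gaps $\tau_{k+1}-\tau_k$ are nondecreasing (forcing concavity) while still satisfying $\phi(\tau_k)\le 2^{-(n_1+k-1)}$ (forcing integrability of $gF$). The recursion $\tau_{k+1}=\max(s_{n_1+k},\,2\tau_k-\tau_{k-1})$ achieves both simultaneously, and your final paragraph correctly identifies this as the crux. The layer-cake computation and the verification that $V\in\mathcal K_{1,\infty}$ are routine and carried out without error. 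The separate treatment of $w\in\Lrm^\infty$ via $V(r)=r^2$ is a clean shortcut; alternatively one could absorb it into the general construction, but there is no need.
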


\begin{proof}[Proof of Theorem \ref{thm existence X_alpha + moment U}]
    First we want to apply \textbf{Theorem \ref{thm vallee-poussin amelio}} with $\Omega = \mathbb{N}^\ast$. To that end, we define the measure $\mu$ by 
    \[ \mu(I) = \sum_{i\in I} C_i^{init},~~ I \subset \mathbb{N}\backslash\left\{0\right\}. \]
    The condition $C^{init} \in X_\alpha^+$ ensures that the function $x\mapsto x^\alpha$ belongs to $\Lrm^1(\Omega,\mathcal{B},\mu)$. Therefore, using \textbf{Theorem \ref{thm vallee-poussin amelio}}, there exists a function $U_0 \in \mathcal{K}_{1,\infty}$ such that $x\mapsto U_0(x^\alpha)$ belongs to $\Lrm^1(\Omega,\mathcal{B},\mu)$, i.e.
    \begin{equation}
        \mathcal{U}_0 := \sum_{i=1}^{+\infty} U_0(i^\alpha)C_i^{init} < \infty.
        \label{condition U_0 alpha}
    \end{equation}  \newline
    Moreover, for $n\geq i$,
    \begin{equation}
        C_i^n(t) \leq M_\alpha^n(t) \leq \lambda T + \|C^{init}\|_{X_\alpha},\text{ on } [0,T],
        \label{majoration C_i^n}
    \end{equation} 
    and by \textbf{Lemma \ref{lemme existence 2}}, the sequence $(C_i^n)_{n\geq i}$ is bounded in $\Lrm^\infty(0,T)\cap \Wrm^{1,1}(0,T)$ 
    for each $i\geq 1$ and $T\in (0,\infty)$. Therefore, using Helly's theorem \cite[Théorème 5 (p. 372)]{Kolmogorov}, there exists a subsequence of $(C_i^n)_{n\geq i}$, still denoted $(C_i^n)_{n\geq i}$, and a sequence $C$ of bounded variation functions such that 
    \begin{equation}
        \lim_{n\to\infty} C_i^n(t) = C_i(t),~~\forall i\geq 1,~\forall t\geq 0.
        \label{limite suite C_i^n}
    \end{equation}
    Thus, using (\ref{majoration C_i^n}) and (\ref{limite suite C_i^n}), we have $C\in X_\alpha^+$, with
    \begin{equation}
       \|C(t)\|_{X_\alpha^+} \leq \lambda T + \|C^{init}\|_{X_\alpha}.
       \label{borne norme C X_alpha}
    \end{equation}
    \vspace{0.1cm}
    
    It now remains to show that the potential candidate $C$ is indeed a solution of (\ref{systeme equations BD-Depoly}), 
    satisfying $C(0) = C^{init}$. To that end, we need to show two estimates. First, as $U_0 \in \mathcal{K}_{1,\infty}$, 
    using \textbf{Lemma \ref{lemme existence 1}} and (\ref{condition U_0 alpha}), we have for all $T\geq 0$, and $n\geq 3$,
    \begin{equation}
        \sum_{i=1}^n U_0(i^\alpha)C_i^n(t) \leq \xi(T),~~ t\in [0,T],
        \label{borne somme U(i^alpha)C_i}
    \end{equation}
    where the constant $\xi(T)$ depend only on $a, C^{init}, U_0$ and $T$.
   
    Let $T\in (0,+\infty)$ and $m\geq 2$, considering $C_i^n = 0$ if $i\geq n$, the previous estimate becomes, for $n\geq m+1$,
    \begin{equation*}
        \sum_{i=1}^m U_0(i^\alpha)C_i^n(t) \leq \xi(T),~~ t\in [0,T].
    \end{equation*}
    Thanks to (\ref{limite suite C_i^n}), we can pass to the limit when $n\to +\infty$ in the previous estimates, thus it stays 
    true substituting $C_i^n$ by $C_i$. We then pass to the limit on $m\to +\infty$, and we obtain
    \begin{equation}
        \sum_{i=1}^{+\infty} U_0(i^\alpha)C_i(t) \leq \xi(T),~~ t\in [0,T].
        \label{borne somme inf U(i^alpha)C_i}
    \end{equation}
    Secondly, using \ref{borne a_i sous lineaire et positivite b_i} and (\ref{borne norme C X_alpha}), we obtain for all $i\geq 1$
    \begin{equation}
       \sum_{j=1}^{+\infty}a_{j}C_j \in \Lrm^1(0,T).
       \label{demo continuite}
    \end{equation}
    Now, we state that, for all $i\geq 1$, we have
    \begin{align}
       &\lim_{n\to +\infty}\left\| C_1^n\sum_{j=1}^{n-1} a_{j}C_j^n - C_1 \sum_{j=1}^{+\infty} a_{j}C_j \right\|_{\Lrm^1(0,T)} = 0, \label{assertion 1}\\
       &\lim_{n\to +\infty}\left\| a_iC_i^nC_1^n - a_iC_iC_1 \right\|_{\Lrm^1(0,T)} = 0, \label{assertion 2}\\
       &\lim_{n\to +\infty} \left\| b_iC_i^n - b_iC_i \right\|_{\Lrm^1(0,T)} = 0. \label{assertion 3}
    \end{align}
    The last two assertions (\ref{assertion 2}) and (\ref{assertion 3}) come from (\ref{limite suite C_i^n}), (\ref{borne norme C X_alpha}), (\ref{borne C_i^n par borne cond init Xalpha}) and from Lebesgue-dominated convergence theorem.
    It remains to prove the first assertion (\ref{assertion 1}). Let $i\geq 1$ fixed, and $m\geq 2$. As before, using (\ref{limite suite C_i^n}), (\ref{borne norme C X_alpha}) and (\ref{borne C_i^n par borne cond init Xalpha}) and Lebesgue-dominated convergence theorem, we obtain for $m\leq n-1$,
    \begin{equation}
       \lim_{n\to +\infty} \left\| \sum_{j=1}^m a_{j}(C_1^nC_j^n-C_1C_j) \right\|_{\Lrm^1(0,T)} = 0.
       \label{demo partie 1}
    \end{equation}
    Moreover, for $n-1\geq m+1$, and using \ref{borne a_i sous lineaire et positivite b_i}, (\ref{borne norme C X_alpha}) and (\ref{borne somme inf U(i^alpha)C_i}) we obtain
    \begin{align}
       \left\|\sum_{j=m+1}^{+\infty}a_{j}C_1C_j\right\|_{\Lrm^1(0,T)} &\leq a(\lambda T + \|C^{init}\|_{X_\alpha})\left\|\sum_{j=m+1}^{+\infty}j^\alpha C_j\right\|_{\Lrm^1(0,T)} \nonumber \\
       &\leq a(\lambda T + \|C^{init}\|_{X_\alpha}) \sup_{j\geq m+1} \frac{j^\alpha}{U_0(j^\alpha)}\left\|\sum_{j=m+1}^{+\infty} U_0(j^\alpha) C_j\right\|_{\Lrm^1(0,T)}  \nonumber \\
       &\leq a(\lambda T + \|C^{init}\|_{X_\alpha})T\xi(T) \sup_{j\geq m+1} \frac{j^\alpha}{U_0(j^\alpha)}. \label{demo partie 2}
    \end{align}
    Likewise, using \ref{borne a_i sous lineaire et positivite b_i}, (\ref{borne C_i^n par borne cond init Xalpha}) and (\ref{borne somme U(i^alpha)C_i}) we obtain
        \begin{align}
        \left\|\sum_{j=m+1}^{n-i}a_{j}C_1^nC_j^n\right\|_{\Lrm^1(0,T)} &\leq a(\lambda T + \|C^{init}\|_{X_\alpha})\left\|\sum_{j=m+1}^{+\infty}j^\alpha C_j^n\right\|_{\Lrm^1(0,T)} \nonumber \\
        &\leq a(\lambda T + \|C^{init}\|_{X_\alpha}) \sup_{j\geq m+1} \frac{j^\alpha}{U_0(j^\alpha)}\left\|\sum_{j=m+1}^{+\infty} U_0(j^\alpha) C_j^n\right\|_{\Lrm^1(0,T)}  \nonumber \\
        &\leq a(\lambda T + \|C^{init}\|_{X_\alpha})T\xi(T) \sup_{j\geq m+1} \frac{j^\alpha}{U_0(j^\alpha)}. \label{demo partie 3}
    \end{align}
    Therefore, combining (\ref{demo partie 1})-(\ref{demo partie 3}), we obtain that for all $m\geq 2$,
    \begin{equation*}
        \limsup_{n\to +\infty} \left\| \sum_{j=1}^{n-1} a_{j}C_1^nC_j^n - \sum_{j=1}^{+\infty} a_{j}C_1C_j \right\|_{\Lrm^1(0,T)} \leq 2a(\lambda T + \|C^{init}\|_{X_\alpha})T\xi(T) \sup_{j\geq m+1}\frac{j^\alpha}{U_0(j^\alpha)}.
    \end{equation*}
    As $U_0$ belongs to $\mathcal{K}_{1,\infty}$, we have that the right-hand side of the above inequality tends to $0$ when $m\to +\infty$, thus we obtain (\ref{assertion 1}). \newline
   
   Now, thanks to (\ref{limite suite C_i^n}) and (\ref{assertion 1})-(\ref{assertion 3}), passing to the limit in $n$ we obtain $(ii)$ 
   of the \textbf{Definition \ref{defi solution BD-Depoly}}. By (\ref{demo continuite}), we recover the continuity of $C_i$ for all 
   $i\geq 1$. Therefore, we have shown that $C=(C_i)_{i\geq 1} \in X_\alpha^+$ is a solution of (\ref{systeme equations BD-Depoly}) 
   on $[0,T]$ satisfying $C(0)=C^{init}$. Note that the continuity of $C(\cdot)$ comes from the uniform convergence thanks to 
   Dini's theorem of the sequence of continuous functions $\left(\sum\limits_{i=1}^n i^\alpha C_i(\cdot)\right)_{n\geq 1}$ on $[0,T]$. \newline

    To conclude, we only need to show that the solution constructed in the previous proof enjoys the additional property (\ref{moment U fini}). 
    As $U \in \mathcal{K}_1\cup\mathcal{K}_2$, (\ref{moment U fini}) follows from \textbf{Lemma \ref{lemme existence 1}} and the convergence 
    (\ref{limite suite C_i^n}).
\end{proof}

\section{Uniqueness} \label{section uniqueness}

We first need some estimates on a solution which satisfy (\ref{sum iphi_iC_i unicité}). These are as follows:

\begin{lem}
    A solution $C$ of (\ref{systeme equations BD-Depoly}) on $[0,T)$ which satisfy the condition (\ref{sum iphi_iC_i unicité}) 
    also satisfies, for all $t\in (0,T)$,
    \begin{equation}
        \lim_{n\to +\infty} \int_0^t n^\alpha a_n C_n(s)C_1(s)ds = 0,
        \label{lemme unicite assertion 1}
    \end{equation}
    \begin{equation}
        \lim_{n\to +\infty} \int_0^t \sum_{i=n}^{+\infty} a_iC_i(s)C_1(s)ds = 0.
        \label{lemme unicite assertion 2}
    \end{equation}
    \label{lemme technique unicité}
\end{lem}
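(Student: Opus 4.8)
The plan is to dominate both integrands pointwise by quantities that are controlled by the hypothesis (\ref{sum iphi_iC_i unicité}), and then invoke only elementary convergence theorems. Two preliminary observations are used repeatedly. First, since $C \in \Lrm^\infty([0,t],X_\alpha^+)$ and $C_1$ is continuous on $[0,T)$, there is a constant $M_t$ such that $C_1(s) \leq M_t$ for all $s \in [0,t]$ and every $t < T$. Second, by \ref{borne a_i sous lineaire et positivite b_i} together with $\alpha \in [0,1]$ and $i \geq 1$ one has $a_i \leq a i^\alpha \leq a i^{2\alpha}$, so that every occurrence of a coagulation rate can be absorbed into a $i^{2\alpha}$-weight.

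For (\ref{lemme unicite assertion 1}), using $a_n \leq a n^\alpha$ and the bound on $C_1$,
\[ \int_0^t n^\alpha a_n C_n(s) C_1(s)\, ds \leq a M_t \int_0^t n^{2\alpha} C_n(s)\, ds. \]
All terms being non-negative, Tonelli's theorem gives $\sum_{n\geq 1} \int_0^t n^{2\alpha} C_n(s)\, ds = \int_0^t \sum_{n\geq 1} n^{2\alpha} C_n(s)\, ds$, which is finite by (\ref{sum iphi_iC_i unicité}). Hence $\int_0^t n^{2\alpha} C_n(s)\, ds$ is the general term of a convergent series and tends to $0$ as $n \to +\infty$, which yields the first assertion. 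For (\ref{lemme unicite assertion 2}), using $a_i \leq a i^{2\alpha}$ and the bound on $C_1$,
\[ \int_0^t \sum_{i=n}^{+\infty} a_i C_i(s) C_1(s)\, ds \leq a M_t \int_0^t \sum_{i=n}^{+\infty} i^{2\alpha} C_i(s)\, ds. \]
By (\ref{sum iphi_iC_i unicité}) the function $s \mapsto \sum_{i\geq 1} i^{2\alpha} C_i(s)$ belongs to $\Lrm^1(0,t)$, hence is finite for a.e.\ $s$; for such $s$ the tail $\sum_{i=n}^{+\infty} i^{2\alpha} C_i(s)$ decreases to $0$ as $n \to +\infty$, while being dominated by the fixed integrable function $\sum_{i\geq 1} i^{2\alpha} C_i(s)$. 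The dominated convergence theorem then gives the second assertion.

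There is no genuine obstacle here: the statement is essentially a consequence of the moment hypothesis (\ref{sum iphi_iC_i unicité}). The only points requiring care are the justification that $C_1$ is bounded on $[0,t]$ (continuity together with the $\Lrm^\infty X_\alpha$ bound), the use of $\alpha \geq 0$ to pass from $i^\alpha$ to $i^{2\alpha}$, and the interchange of summation and integration via Tonelli — all of which are routine. This lemma will then be used in the uniqueness proof to discard the boundary terms arising when one differentiates a truncated weighted sum of the $C_i$ and lets the truncation tend to infinity.
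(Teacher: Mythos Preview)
Your proof is correct and follows essentially the same approach as the paper: bound $C_1$, absorb $a_i$ into an $i^{2\alpha}$-weight via \ref{borne a_i sous lineaire et positivite b_i}, and conclude from the integrability hypothesis (\ref{sum iphi_iC_i unicité}). The only cosmetic difference is that for (\ref{lemme unicite assertion 1}) the paper bounds $n^\alpha a_n C_n$ by the full tail $\sum_{i\geq n} i^\alpha a_i C_i$ and then handles both assertions simultaneously via dominated convergence, whereas you use Tonelli and the convergent-series argument; also, the paper invokes the explicit bound (\ref{UIT bound of C_1}) on $C_1$ rather than continuity.
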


\begin{proof}
    Let $n\geq 2$ and $t\in (0,T)$, recalling \ref{borne a_i sous lineaire et positivite b_i} we have
    \begin{equation*}
        \int_0^t n^\alpha a_n C_1C_n ds \leq \int_0^t \sum_{i=n}^{+\infty} i^\alpha a_iC_i C_1 ds \leq a\int_0^t \sum_{i=n}^{+\infty} i^{2\alpha} C_i C_1 ds .
    \end{equation*}
    and 
    \begin{equation*}
        \int_0^t \sum_{i=n}^{+\infty} a_iC_i C_1 ds \leq a\int_0^t \sum_{i=n}^{+\infty} i^{2\alpha} C_i C_1 ds .
    \end{equation*}
    Therefore, both assertions (\ref{lemme unicite assertion 1}) and (\ref{lemme unicite assertion 2}) comes form Lebesgue-dominated convergence theorem using 
    (\ref{UIT bound of C_1}) and (\ref{sum iphi_iC_i unicité}). 
\end{proof}

\begin{proof}[Proof of Theorem \ref{thm unicité X_alpha}]
    Let $C=(C_i)_{i\geq 1}$ and $D=(D_i)_{i\geq 1}$ denote two solutions of (\ref{systeme equations BD-Depoly}) on $[0,T)$ with $C(0)=D(0)=C^{init}$ which satisfy (\ref{sum iphi_iC_i unicité}). For $i\geq 1$, defines
    \[ z_i := C_i-D_i, \text{  and  } g_i:=i^\alpha~\sign(z_i),\]
    where $\sign$ denotes the sign function, i.e. $\sign(x)=\frac{x}{|x|}$ for $x \in \mathbb{R}^*$ and $\sign(0)=0$. Now, we fix $n\geq 2$, and $t\in (0,T)$. The goal is to show that $\|z(t)\|_{X_\alpha} = 0$, which will entail the desired uniqueness result. \newline

    Note that if $f \in AC(\mathbb{R})$ then $|f| \in AC(\mathbb{R})$, and therefore
    \[ \frac{d}{dt}|f(t)| = \sign(f(t))\frac{df}{dt}(t) \text{ a.e. in } \mathbb{R}. \]
    Since $C$ and $D$ are absolute continuous functions, $z$ is also an absolute continuous function and therefore using the integral 
    form of (\ref{systeme equations BD-Depoly}), we obtain
    \begin{align*}
    \sum_{i=1}^n i^\alpha|z_i(t)| = \sum_{i=1}^n g_iz_i(t) &= -\int_0^t \sum_{j=1}^{+\infty} g_1 a_j [C_1(s)C_j(s) - D_1(s)D_j(s)] + g_1a_1[C_1(s)^2-D_1(s)^2]ds  \\
    &~~ + \sum_{i=2}^n \int_0^t g_i a_{i-1}\left[C_{i-1}(s)C_1(s)-D_{i-1}(s)D_1(s)\right]ds \\
    &~~ -\sum_{i=2}^n \int_0^t g_i a_{i}\left[C_{i}(s)C_1(s)-D_{i}(s)D_1(s)\right]ds \\
    &~~ + \sum_{i=2}^n \left( -\int_0^t g_ib_i\left[C_i(s)-D_i(s)\right]ds + \int_0^t g_ib_{i+1}\left[C_{i+1}(s)-D_{i+1}(s)\right]ds \right) \\
    &= \int_0^t \sum_{i=1}^{n-1} (g_{i+1}-g_i-g_1)a_i[C_i(s)C_1(s) - D_i(s)D_1(s)] ds \\
    & ~~ - \int_0^t g_{n}a_n[C_1(s)C_n(s)-D_1(s)D_n(s)] ds \\
    &~~ - \sum_{i=n}^{+\infty} \int_0^t g_1 a_{i}\left[C_{i}(s)C_1(s)-D_{i}(s)D_1(s)\right]ds \\
    &~~ + \sum_{i=2}^n \left( -\int_0^t g_ib_i\left[C_i(s)-D_i(s)\right]ds + \int_0^t g_ib_{i+1}\left[C_{i+1}(s)-D_{i+1}(s)\right]ds \right).
    \end{align*}
    However, $C_{i}(s)C_1(s)-D_{i}(s)D_1(s) = C_i(s)z_1(s)+D_1(s)z_i(s)$, therefore
    \begin{align*}
        \sum_{i=1}^n i^\alpha|z_i(t)| &\leq \int_0^t \sum_{i=1}^{n-1} (g_{i+1}-g_i-g_1)a_i[C_i(s)z_1(s) + z_i(s)D_1(s)] ds \\
        & ~~ - \int_0^t g_{n}a_n[C_1(s)C_n(s)-D_1(s)D_n(s)] ds \\
        &~~ - \sum_{i=n}^{+\infty} \int_0^t g_1 a_{i}\left[C_{i}(s)C_1(s)-D_{i}(s)D_1(s)\right]ds \\
        &~~ + \sum_{i=2}^n \left( -\int_0^t g_ib_i\left[C_i(s)-D_i(s)\right]ds + \int_0^t g_ib_{i+1}\left[C_{i+1}(s)-D_{i+1}(s)\right]ds \right).
    \end{align*}
    As $\sign(x) \leq 1$ for all $x\in \mathbb{R}$, we have for all $i,j\geq 1$,
    \begin{equation}
    \begin{aligned}
        (g_{i+j}-g_i-g_j)z_j &= ((i+j)^\alpha\sign(z_{i+j})-i^\alpha\sign(z_i)-j^\alpha\sign(z_j))z_j \\
        &= ((i+j)^\alpha\sign(z_{i+j}z_j)-i^\alpha\sign(z_iz_j)-j^\alpha)|z_j| \\
        &\leq ((i+j)^\alpha + i^\alpha - j^\alpha)|z_j| \\
        &\leq 2i^\alpha|z_j|.
    \end{aligned}
    \label{demo unicite inegalite g_{i+1}}
    \end{equation}
    Moreover, using \ref{borne a_i sous lineaire et positivite b_i} we obtain
    \begin{align*}
        \sum_{i=2}^n \left( -\int_0^t g_ib_iz_i(s)ds + \int_0^t g_ib_{i+1}z_{i+1}(s)ds \right) &= \int_0^t \sum_{i=2}^{n-1} g_{i-1}b_iz_i(s) - \sum_{i=2}^ng_ib_iz_i(s) ds \\
        &= -\int_0^t \sum_{i=2}^{n-1} (g_{i}-g_{i-1})b_iz_i(s) ds - \int_0^t \underbrace{n^\alpha b_n|z_n(s)|}_{\geq 0}ds. 
    \end{align*}
    However, $(g_i-g_{i-1})z_i = (i^\alpha-(i-1)^\alpha\sign(z_{i-1}z_i))|z_i| \geq 0$, this entails that
    \begin{equation}
        \sum_{i=2}^n \left( -\int_0^t g_ib_iz_i(s)ds + \int_0^t g_ib_{i+1}z_{i+1}(s)ds \right) \leq 0. 
    \label{demo unicite positivite terme}
    \end{equation}
    Therefore, using (\ref{demo unicite inegalite g_{i+1}}) and (\ref{demo unicite positivite terme}), we have
    \begin{equation}
        \begin{aligned}
        \sum_{i=1}^n i^\alpha|z_i(t)| &\leq \int_0^t \sum_{i=1}^{n-1} a_i i^\alpha |z_1(s)|C_i(s) +\sum_{i=1}^{n-1} a_i |z_i(s)|D_1(s)ds \\
        & ~~ - \int_0^t g_{n}a_n[C_1(s)C_n(s)-D_1(s)D_n(s)] ds \\ 
        &~~ - \int_0^t \sum_{i=n}^{+\infty} g_1 a_{i}\left[C_{i}(s)C_1(s)-D_{i}(s)D_1(s)\right]ds.
        \label{preuve unicite assertion 0}
        \end{aligned}
    \end{equation}
    We now treat one by one the three terms on the right-hand side of the above inequality. \newline
    \textit{First term :} Using \ref{borne a_i sous lineaire et positivite b_i}, we have
    \begin{align*}
        \int_0^t \sum_{i=1}^{n-1} a_i i^\alpha |z_1(s)|C_i(s) ds &+ \int_0^t \sum_{i=1}^{n-1} a_i |z_i(s)|D_1(s)ds \\
        &\leq \int_0^t \sum_{i=1}^n i^\alpha a_i C_i(s) \sum_{i=1}^n |z_i(s)| + \sum_{i=1}^n a_i|z_i(s)| \sum_{i=1}^n i^\alpha D_i(s) ds \\
        &\leq \int_0^t \left( \sum_{i=1}^{n} i^\alpha a_i C_i(s) + a \sum_{i=1}^n i^\alpha D_i(s) \right)\sum_{i=1}^n i^\alpha |z_i(s)| ds.
    \end{align*}
    Using (\ref{sum iphi_iC_i unicité}) and Lebesgue-dominated convergence theorem, we obtain
    \begin{multline}
        \lim_{n\to +\infty} \int_0^t \sum_{i=1}^{n-1} a_i i^\alpha |z_1(s)|C_i(s) +\sum_{i=1}^{n-1} a_i |z_i(s)|D_1(s)ds \\ \leq \int_0^t \left( \sum_{i=1}^{+\infty} i^\alpha a_i C_i(s) + a (\lambda T + \|C^{init}\|_{X_\alpha}) \right)\sum_{i=1}^{+\infty} i^\alpha |z_i(s)| ds.
        \label{preuve unicite assertion 1}
    \end{multline}
    \textit{Second term :} As $-g_n[C_1C_n-D_1D_n] \leq n^\alpha [C_1C_n+D_1D_n]$, we have
    \begin{align*}
        - \int_0^t g_{n}a_n[C_1(s)C_n(s)-D_1(s)D_n(s)] ds \leq \int_0^t n^\alpha a_n C_1(s)C_n(s) ds + \int_0^t n^\alpha a_n D_1(s)D_n(s) ds.
    \end{align*}
    Using \ref{borne a_i sous lineaire et positivite b_i} and (\ref{lemme unicite assertion 1}) from \textbf{Lemma \ref{lemme technique unicité}}, we obtain
    \begin{equation}
        \lim_{n\to +\infty} - \int_0^t g_{n}a_n[C_1(s)C_n(s)-D_1(s)D_n(s)] ds = 0.
        \label{preuve unicite assertion 2}
    \end{equation}
    \textit{Third term :} We have 
    \[ - \int_0^t \sum_{i=n}^{+\infty} g_1 a_{i}\left[C_{i}(s)C_1(s)-D_{i}(s)D_1(s)\right]ds \leq \int_0^t \sum_{i=n}^{+\infty} a_iC_1(s)C_i(s) ds + \int_0^t \sum_{i=n}^{+\infty} a_iD_1(s)D_i(s)ds. \]
    Therefore, using (\ref{lemme unicite assertion 2}) from \textbf{Lemma \ref{lemme technique unicité}}, we obtain
    \begin{equation}
        \lim_{n\to +\infty} - \int_0^t \sum_{i=n}^{+\infty} g_1 a_{i}\left[C_{i}(s)C_1(s)-D_{i}(s)D_1(s)\right]ds = 0.
        \label{preuve unicite assertion 3}
    \end{equation}
    Passing to the limit $n\to +\infty$ in (\ref{preuve unicite assertion 0}), the previous estimations (\ref{preuve unicite assertion 1})-(\ref{preuve unicite assertion 3}) entails that
    \begin{equation*}
        \sum_{i=1}^{+\infty} i^\alpha |z_i(t)| \leq \int_0^t \left( \sum_{i=1}^{+\infty} i^\alpha a_i C_i(s) + a (\lambda T + \|C^{init}\|_{X_\alpha}) \right)\sum_{i=1}^{+\infty} i^\alpha |z_i(s)| ds.
    \end{equation*}
    Then applying Grönwall's lemma, we obtain that for all $t\in [0,T)$,
    \begin{equation*}
        \sum_{i=1}^{+\infty} i^\alpha |z_i(t)| \leq 0,
    \end{equation*}
    which entails $C=D$, and ends the proof.
\end{proof}

\section{Long time behaviour}

\subsection{Uniform-in-time bound} \label{subsection UIT}

We start by proving \textbf{Proposition \ref{prop UIT exponential moment}}.
\begin{proof}[Proof of Proposition \ref{prop UIT exponential moment}]
    Let $M>1$, for convenience we rewrite condition (\ref{condition coefficient UIT exponential moment 2}) as 
    \begin{equation}
        \frac{b_i}{a_i} \geq M\kappa \text{ for all } i\geq 1.
        \label{proof UIT hyp}
    \end{equation}  
    Let $t\geq 0$ and $\mu \in (1,M)$. We have the following formal computations:
    \begin{align*}
        \frac{d}{dt}\sum_{i=1}^{+\infty}\mu^iC_i(t) &= \mu\left(\lambda - \sum_{j=1}^{+\infty}a_jC_j(t)C_1(t)-a_1C_1(t)^2\right) + \sum_{i=2}^{+\infty}\mu^i(J_{i-1}(t)-J_i(t)) \\
        &= \mu\left(\lambda - \sum_{j=1}^{+\infty}a_jC_j(t)C_1(t)-a_1C_1(t)^2\right) + (\mu - 1)\sum_{i=1}^{+\infty}\mu^iJ_i + \mu(a_1C_1(t)^2-b_2C_2(t)) \\
        &\leq \mu\lambda - \mu a_1C_1(t)^2 + (\mu-1)\sum_{i=1}^{+\infty}\mu^i(a_iC_i(t)C_1(t)-b_{i+1}C_{i+1}(t)) + \mu(a_1C_1(t)^2-b_2C_2(t)) \\
        &= \mu\lambda + (\mu-1)\sum_{i=1}^{+\infty}\mu^{i-1}a_i\left(\mu C_1(t)-\frac{b_i}{a_i}\right)C_i(t) +(\mu-1)b_1C_1(t) - \mu b_2C_2(t) \\
        &= \mu\lambda + \frac{(\mu-1)}{\mu}\sum_{i=1}^{+\infty}\mu^{i}a_i\left(\mu C_1(t)-\frac{b_i}{a_i}\right)C_i(t) +(\mu-1)b_1C_1(t) - \mu b_2C_2(t) \\
        &\leq \mu\lambda +(\mu-1)b_1C_1(t) + \frac{(\mu-1)}{\mu}\sum_{i=1}^{+\infty}\mu^{i}a_i\left(\mu C_1(t)-\frac{b_i}{a_i}\right)C_i(t).
    \end{align*}
    Then using (\ref{UIT bound of C_1}) and (\ref{proof UIT hyp}), and denoting $\varepsilon = (M-\mu)\kappa > 0$, we have
    \begin{align*}
        \frac{d}{dt}\sum_{i=1}^{+\infty}\mu^iC_i(t) &\leq \mu\lambda + (\mu-1)b_1\kappa - \varepsilon \frac{\mu-1}{\mu}\sum_{i=1}^{+\infty}a_i\mu^iC_i(t) \\
        &\leq \mu\lambda + (\mu-1)b_1\kappa - \varepsilon \frac{\mu-1}{\mu}\underline{a}\sum_{i=1}^{+\infty}\mu^iC_i(t).
    \end{align*}
    Therefore, using Grönwall's lemma, we obtain for all $t\geq 0$ and $\mu\in(1,M)$,
    \begin{equation*}
        \sum_{i=1}^{+\infty}\mu^iC_i(t) \leq \sum_{i=1}^{+\infty}\mu^iC_i^{init} 
        + \frac{\lambda\mu+(\mu-1)b_1\kappa}{\varepsilon\frac{\mu-1}{\mu}\underline{a}} := \Xi_1,
    \end{equation*}
    where $\Xi_1$ depends only on $\lambda$, $(a_i)_{i\geq 1}$, $(b_i)_{i\geq 1}$, $\mu$ and the initial condition. Taking $\nu = \ln(\mu)$, 
    we have the desired estimation (\ref{UIT bound exponential moment}). \newline

    The rigorous proof can be left to the reader, as it consists in reproducing these estimates on the truncated system 
    (\ref{systeme equations BD-Depoly tronc conserv}), (\ref{systeme equations BD-Depoly tronqué cond initiale}) and then
    letting $n\to +\infty$ using $C_i^n \to C_i$ and $\sum_{i=1}^n \mu^iC_i^n \to \sum_{i=1}^{+\infty} \mu^iC_i$.

    % The rigorous proof is left to the reader, after reproducing these estimates on the truncated system (\ref{systeme equations BD-Depoly tronc conserv}), 
    % (\ref{systeme equations BD-Depoly tronqué cond initiale}). Then, we let $n \to +\infty$ and since we have $C_i^n \to C_i$ and 
    % $\sum_{i=1}^n \mu^iC_i^n \to \sum_{i=1}^{+\infty} \mu^iC_i$ the estimates (\ref{UIT bound exponential moment}) follows. 
\end{proof}

As mention in the beginning, we have an upper bound on the mass and lower bound on $C_1(t)$.
\begin{prop}[\textbf{Mass uniform-in-time bound}]
    Let $b>0$. Assume that $C^{init} \in X_1^+$ and that the kinetics coefficients satisfies the following conditions:
    \begin{equation}
        \forall i\geq 1,~ b_i\geq bi.
        \label{condition coefficient UIT masse}
    \end{equation}
    Then, for all $t\geq 0$,
    \begin{equation}
        \sum_{i=1}^{+\infty} iC_i(t) \leq \max\left\{\sum_{i=1}^{+\infty} iC_i^{init}, \frac{\lambda + b_1\kappa}{b}\right\}.
        \label{UIT bound of mass}
    \end{equation}
    \label{prop UIT mass}
\end{prop}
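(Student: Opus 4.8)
The plan is to test the weak form of the \emph{conservative truncation} against $\varphi_i = i$: this choice annihilates the coagulation contribution (monomer exchange preserves $\varphi_i=i$) and turns the fragmentation contribution into a genuine dissipation, after which a linear differential inequality is closed by comparison.

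Concretely, I would work on the system (\ref{systeme equations BD-Depoly tronc conserv})--(\ref{systeme equations BD-Depoly tronqué cond initiale}), whose nonnegative solution $C^n$ is given by \textbf{Proposition \ref{prop existence solution systeme tronqué}}. Inserting $\varphi_i=i$ into the truncated weak form (\ref{weak form 1 tronqué}) and using $(i+1)-i-1=0$ and $(i-1)-i=-1$ yields, for $t\ge 0$,
\[ \frac{d}{dt}\sum_{i=1}^n iC_i^n(t) = \lambda - 2b_2C_2^n(t) - \sum_{i=3}^n b_iC_i^n(t). \]
Next I would use the uniform bound $C_1^n(t)\le\kappa(\lambda,C^{init})$, which holds on the truncated system by the very same argument as for (\ref{UIT bound of C_1}) (the $C_1^n$-equation still gives $\tfrac{d}{dt}C_1^n\le\lambda-2a_1(C_1^n)^2$), together with the hypothesis (\ref{condition coefficient UIT masse}), $b_i\ge bi$, which in particular forces $b_1\ge b$. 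Since $C^n\ge 0$ and $2b_2C_2^n+\sum_{i=3}^n b_iC_i^n \ge b\sum_{i=2}^n iC_i^n = b\sum_{i=1}^n iC_i^n - bC_1^n$, this gives
\[ \frac{d}{dt}\sum_{i=1}^n iC_i^n(t) \le \lambda + b_1\kappa - b\sum_{i=1}^n iC_i^n(t). \]

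I would then integrate this by comparison with the linear ODE $y'=\lambda+b_1\kappa-by$, whose solution is a convex combination of $y(0)$ and $(\lambda+b_1\kappa)/b$, hence stays below $\max\{y(0),(\lambda+b_1\kappa)/b\}$. Taking $y(0)=\sum_{i=1}^n iC_i^{init}\le\sum_{i=1}^{+\infty} iC_i^{init}$ (finite since $C^{init}\in X_1^+$) yields, for all $t\ge 0$ and $n\ge 3$,
\[ \sum_{i=1}^n iC_i^n(t) \le \max\Big\{\sum_{i=1}^{+\infty} iC_i^{init},\ \frac{\lambda+b_1\kappa}{b}\Big\}. \]
Finally I would pass to the limit along the subsequence with $C_i^n(t)\to C_i(t)$ built in the proof of \textbf{Theorem \ref{thm existence X_alpha}}: for each fixed $M$, $\sum_{i=1}^M iC_i(t)=\lim_n\sum_{i=1}^M iC_i^n(t)$ is bounded by the right-hand side, and letting $M\to+\infty$ gives (\ref{UIT bound of mass}).

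The computation is short; the only delicate point is the passage to the limit, i.e. making sure the truncation boundary does not spoil the estimate. This is precisely why I would run the argument on the conservative truncation (\ref{systeme equations BD-Depoly tronc conserv}) rather than directly on (\ref{systeme equations BD-Depoly}) with a cut-off test sequence such as $\varphi_i=\min(i,n)$: on the infinite system such a $\varphi$ generates an uncontrolled positive boundary term of order $n\sum_{i>n}C_i$, whereas the truncated weak form (\ref{weak form 1 tronqué}) closes cleanly. This follows the same scheme already used in the proof of \textbf{Proposition \ref{prop UIT exponential moment}}.
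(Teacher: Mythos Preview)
Your proof is correct and follows essentially the same computation as the paper: test with $\varphi_i=i$ to obtain the differential inequality $\tfrac{d}{dt}\sum_i iC_i \le \lambda + b_1\kappa - b\sum_i iC_i$, then conclude by comparison. The only difference is that the paper performs the computation directly (and formally) on the infinite system via (\ref{weak form 1}), whereas you work on the truncated system and pass to the limit---which is exactly the rigorous justification the paper prescribes for the analogous \textbf{Proposition~\ref{prop UIT exponential moment}}.
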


\begin{proof}[Proof of Proposition \ref{prop UIT mass}]
    Recalling (\ref{weak form 1}), and using the assumption (\ref{condition coefficient UIT masse}) and the uniform-in-time bound of $C_1$ (\ref{UIT bound of C_1}), we have for all $t\geq 0$,
    \begin{align*}
        \frac{d}{dt}\sum_{i=1}^{+\infty}iC_i(t) &= \lambda -b_2C_2 - \sum_{i=2}^{+\infty}b_iC_i \\
        &\leq \lambda -2bC_2+b_1C_1-b\sum_{i=1}^{+\infty}iC_i \\
        &\leq \lambda + b_1\kappa - b\sum_{i=1}^{+\infty} iC_i.
    \end{align*}
    Therefore, for all $t\geq 0$,
    \begin{equation*}
        \sum_{i=1}^{+\infty}iC_i(t) \leq \max\left\{\sum_{i=1}^{+\infty} iC_i^{init}, \frac{\lambda + b_1\kappa}{b}\right\}.
    \end{equation*}
\end{proof}

\begin{prop}[\textbf{$C_1$ uniform-in-time positivity}]
    Let $a,b>0$. Assume that the kinetics coefficients satisfies the following conditions:
    \begin{equation}
        \forall i\geq 1,~ a_i\leq ai ~\text{ and }~ b_i\geq bi.
        \label{condition coefficient UIT C1 positivité}
    \end{equation}
    Then, there exists a constant $\eta >0$ and $t_1 \geq 0$ such that for all $t\geq t_1$ we have
    \begin{equation}
        C_1(t) \geq \eta.
    \end{equation}
    \label{prop UIT C1 minoration}
\end{prop}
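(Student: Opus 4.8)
The plan is to reduce the equation for $C_1$ to a scalar differential inequality $\dot{C_1}\geq \lambda-\Lambda C_1$ with a time-independent constant $\Lambda>0$, and then to conclude by comparison with the linear ODE $\dot u=\lambda-\Lambda u$.

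First I would control the coagulation sink in the equation for $C_1$. Since $a_i\leq ai$ by (\ref{condition coefficient UIT C1 positivité}), one has $\sum_{j\geq 1}a_jC_j(t)\leq a\sum_{j\geq 1}jC_j(t)$, i.e. the sink is dominated by the total mass. The assumption $b_i\geq bi$ is precisely the one needed by \textbf{Proposition \ref{prop UIT mass}} (which, as there, also uses $C^{init}\in X_1^+$), so it yields a uniform-in-time mass bound $\sum_{j\geq 1}jC_j(t)\leq \mathcal{M}$ for all $t\geq 0$, with $\mathcal{M}=\max\{\sum_i iC_i^{init},(\lambda+b_1\kappa)/b\}$. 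Combining this with the a priori bound $C_1(t)\leq\kappa$ of (\ref{UIT bound of C_1}), the bracket appearing in the $C_1$ equation satisfies $\sum_{j\geq 1}a_jC_j(t)+a_1C_1(t)\leq a\mathcal{M}+a_1\kappa=:\Lambda$ for every $t\geq 0$.

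Next, starting from the integral form of the first equation in \textbf{Definition \ref{defi solution BD-Depoly}}, and using $C_1\geq 0$ together with the bound just obtained, I would write $C_1(t)=C_1^{init}+\lambda t-\int_0^t C_1(s)\big(\sum_{j\geq 1}a_jC_j(s)+a_1C_1(s)\big)\,ds\geq C_1^{init}+\lambda t-\Lambda\int_0^t C_1(s)\,ds$. Here the integrand is genuinely integrable because the condition $\sum_j a_jC_j\in \Lrm^1_{loc}$ is built into the notion of solution, so no truncation argument is needed at this step. An integral Grönwall inequality (equivalently, comparison with the solution of $\dot u=\lambda-\Lambda u$, $u(0)=C_1^{init}$) then gives $C_1(t)\geq \frac{\lambda}{\Lambda}\big(1-e^{-\Lambda t}\big)+C_1^{init}e^{-\Lambda t}\geq \frac{\lambda}{\Lambda}\big(1-e^{-\Lambda t}\big)$ for all $t\geq 0$.

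Finally, assuming $\lambda>0$ (this is necessary: if $\lambda=0$ and $C^{init}=0$ then $C_1\equiv0$), it suffices to take $t_1:=\Lambda^{-1}\ln 2$ and $\eta:=\lambda/(2\Lambda)$, so that $C_1(t)\geq\eta$ for all $t\geq t_1$, which is the claim; one could push $\eta$ arbitrarily close to $\lambda/\Lambda$ at the expense of enlarging $t_1$. I expect the only real point — rather than any delicate estimate — to be securing the uniform-in-time mass bound, i.e. invoking \textbf{Proposition \ref{prop UIT mass}}, so that $\Lambda$ does not depend on $t$; everything afterwards is a soft comparison argument.
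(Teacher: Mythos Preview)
Your proof is correct and follows essentially the same strategy as the paper: use the uniform-in-time mass bound (\textbf{Proposition \ref{prop UIT mass}}) to dominate the coagulation sink, obtain a scalar differential inequality for $C_1$, and conclude by comparison. Your treatment is in fact a bit cleaner: you absorb the quadratic term via $a_1C_1^2\leq a_1\kappa\,C_1$ using (\ref{UIT bound of C_1}), which yields a linear differential inequality and an explicit $\eta=\lambda/(2\Lambda)$, whereas the paper bounds $a_1C_1^2\leq a_1C_1$ only on the region $\{C_1\leq 1\}$ and argues qualitatively that $\dot C_1>0$ there.
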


\begin{proof}[Proof of Proposition \ref{prop UIT C1 minoration}]
    First, using (\ref{condition coefficient UIT C1 positivité}) and the uniform-in-time bound of the mass (\ref{UIT bound of mass}) we have for all $t\geq 0$,
    \begin{equation*}
        \sum_{i=1}^{+\infty}a_iC_i(t) \leq a \sum_{i=1}^{+\infty}iC_i(t) \leq a \max\left\{\sum_{i=1}^{+\infty}iC_i^{init},\frac{\lambda+b_1\kappa}{b}\right\}.
    \end{equation*}
    Then, for all $t\geq 0$, we have
    \begin{align*}
        \frac{d}{dt}C_1(t) &= \lambda - \sum_{j=1}^{+\infty}a_jC_j(t)C_1(t) - a_1C_1(t)^2 \\
        &\geq \lambda - a \max\left\{\sum_{i=1}^{+\infty}iC_i^{init},\frac{\lambda+b_1\kappa}{b}\right\}C_1(t) - a_1C_1(t)^2.
    \end{align*}
    Therefore, as long as $C_1 \leq \min\left\{ 1,a \max\left\{\sum\limits_{i=1}^{+\infty}iC_i^{init},\frac{\lambda+b_1\kappa}{b}\right\} +a \right\}$, we have
    \begin{align*}
        \frac{d}{dt}C_1(t) &\geq \lambda - \left[ a \max\left\{\sum_{i=1}^{+\infty}iC_i^{init},\frac{\lambda+b_1\kappa}{b}\right\} + a_1\right] C_1(t)\\
        &\geq \lambda - \left[ a \max\left\{\sum_{i=1}^{+\infty}iC_i^{init},\frac{\lambda+b_1\kappa}{b}\right\} + a\right] C_1(t) \geq 0.
    \end{align*}
    Hence, there exists $t_1 \geq 0$ such that for all $t\geq t_1$,
    \begin{equation*}
        C_1(t) \geq \frac{1}{2} \min\left\{ 1, a \max\left\{\sum_{i=1}^{+\infty}iC_i^{init},\frac{\lambda+b_1\kappa}{b}\right\} + a \right\} =: \eta.
    \end{equation*}
\end{proof}

\subsection{Steady-states}

In this subsection we prove \textbf{Theorem \ref{thm equilibre}}. Let $C=(C_i)_{i\geq 1}$ a constant solution. From 
(\ref{systeme equations BD-Depoly}), we obtain
\begin{equation*}
    J_{i-1} = J_i, ~\forall i \geq 2.
\end{equation*}
Therefore, there exists $J \in \mathbb{R}$ such that
\begin{equation}
    J_i = J, ~\forall i\geq 1.
    \label{equilibre flux J}
\end{equation}
We then have three possibilities: either $J=0$, or $J > 0$ or $J<0$. We immediately obtain that there is no steady-state with a positive 
flux $J>0$. Indeed, from \ref{borne a_i sous lineaire et positivite b_i} and the positivity of solutions, we have
\begin{equation*}
    a_iC_1C_i = J + b_{i+1}C_{i+1} \geq J.
\end{equation*}
Therefore,
\begin{equation*}
    \sum_{i=1}^{+\infty}a_iC_1C_i \geq \sum_{i=1}^{+\infty}J = +\infty,
\end{equation*}
which means that there is no steady state when $J>0$ due to the equation on $C_1$. \newline

The case $J=0$ gives the steady-state $(Q_iz^i)_{i\geq 1}$. Assuming that a steady-state exists, 
we denote it $C^{eq}$, the equation (\ref{equilibre flux J}) entails the following relation:
\begin{equation*}
    a_iC_1^{eq}C_i^{eq} = b_{i+1}C_{i+1}^{eq}, ~\forall i\geq 1.
\end{equation*}
Thus,
\begin{equation*}
    C_i^{eq} = \left(\prod_{j=2}^{i} \frac{a_{j-1}}{b_j}\right)(C_1^{eq})^i = Q_i(C_1^{eq})^i, ~\forall i\geq 1.
\end{equation*}
The steady-state is therefore entirely determined by $C_1^{eq}$, to determine this value we use the first equation of the system 
(\ref{systeme equations BD-Depoly}) which has not yet been used. We therefore have that $C_1^{eq}$ is a solution of the following 
equation in $z$
\begin{equation}
    \lambda =  \sum_{j=1}^{+\infty}a_{j}Q_jz^{j+1} + a_1 z^2.
    \label{equilibre Qizi C_1^(eq)}
\end{equation}
By comparing with the critical production threshold $\lambda_s$ defined in (\ref{defi lambda_s}), the equation (\ref{equilibre Qizi C_1^(eq)}) 
admits a unique solution when $\lambda \leq \lambda_s$, which verifies $z \leq z_s$. \newline

The last case is the negative flux $J<0$. Using equation (\ref{equilibre flux J}), we have 
\begin{equation}
    J(z) := a_izg_i(z) - b_{i+1}g_{i+1}(z), ~~ \forall i\geq 1,
    \label{equilibre BD-Depoly flux J(z) 2}
\end{equation}
where $z$ and functions $g_i(z)$ designate a constant solution. In other words, we search a steady-state
\[ C_i = g_i(z), \]
for all $i\geq 1$, with $g_1(z) = z$. 

We now want to determine $J$ and the $g_i$ functions. To do this we start from (\ref{equilibre BD-Depoly flux J(z) 2}), divide the 
two members of this equality by $a_iQ_iz^{i+1}$, and using (\ref{defi Q_i}) we obtain
\begin{equation}
    \frac{J(z)}{a_iQ_iz^{i+1}} = \frac{a_izg_i(z)}{a_iQ_iz^{i+1}} - \frac{b_{i+1}g_{i+1}(z)}{a_iQ_iz^{i+1}} 
    = \frac{g_i(z)}{Q_iz^i} - \frac{g_{i+1}(z)}{Q_{i+1}z^{i+1}} 
    \label{equilibrium BD-Depoly eq1 v2}
\end{equation}
Summing from $i=1$ to $k-1$, we obtain
\begin{equation}
    J(z)\sum_{i=1}^{k-1} \frac{1}{a_iQ_iz^{i+1}} = 1 - \frac{g_k(z)}{Q_kz^k}.
    \label{equilibrium BD-Depoly eq2 v2}
\end{equation}
Rewriting (\ref{equilibrium BD-Depoly eq2 v2}), we obtain the expression of $g_k(z)$ for all $k\geq 2$:
\begin{equation}    
    g_k(z) = Q_kz^k\left[ 1 - J(z)\sum_{i=1}^{k-1}\frac{1}{a_iQ_iz^{i+1}} \right],
    \label{equilibrium BD-Depoly definition g_k(z)}
\end{equation}
and we also find $g_1(z) = z$.

In order for the sequence $(g_k(z))_{k\geq 1}$ to be a steady state, it needs to be summable against $(a_k)_{k\geq 1}$. We have
\begin{equation*}
    \sum_{k=2}^{+\infty}a_kg_k(z) = \sum_{k=2}^{+\infty} a_kQ_kz^k - J(z)\sum_{k=2}^{+\infty}a_kQ_kz_k\left(\sum_{i=1}^{k-1}\frac{1}{a_iQ_iz^{i+1}}\right).
\end{equation*}
From the definition of $z_s$ as the radius of convergence of a power series, we observe that if $z>z_s$ then the series $\sum a_kQ_kz^k$ 
is divergent, therefore there is no steady state in the super-critical case. We now suppose that $z\leq z_s$. We have for all $k\geq 1$,
\begin{align}
    a_kg_k(z) = a_kQ_kz^k\left[ 1 - J(z) \sum_{i=1}^{k-1}\frac{1}{a_iQ_iz^{i+1}} \right] &> |J(z)|a_kQ_kz_k\sum_{i=1}^{k-1}\frac{1}{a_iQ_iz^{i+1}} \label{preuve equilibre a_kg_k}\\
    &= |J(z)| \frac{a_k}{b_k} + \sum_{i=1}^{k-2}\frac{a_kQ_k}{a_iQ_i}z^{k-(i+1)} \nonumber
\end{align}
Therefore using \ref{(H2)} and \ref{(H2bis)}, if $z_s<+\infty$ then $a_kg_k(z)$ does not converge towards 0, and therefore the 
series diverge. This implies that there is no steady-state in the super-critical case, and no other steady-state in the sub-critical 
case when $z_s$ is finite. \newline

It remains to prove the uniqueness of the steady-state or the existence of an infinite number of steady-state depending on the 
comparison of the kinetics rates. \newline
\textit{First case:} $b_i \leq i a_i$ for $i \gg 1$. Recalling the expression (\ref{equilibrium BD-Depoly definition g_k(z)}) 
    we show that in this case they are not summable against $(a_k)_{k\geq 1}$. Let $h(x)=\sum_{k=1}^{+\infty} a_kQ_kx^k$ for $x\in [0,z_s)$,
    noticing that all derivative of $h$ are increasing in $x$ and using (\ref{defi Q_i}), we have
    \begin{align*}
        \sum_{k=2}^{+\infty} a_kg_k(z) &\geq |J|\sum_{k=2}^{+\infty}a_kQ_kz^k\sum_{i=1}^{k-1}\frac{1}{a_iQ_iz^{i+1}} 
        = |J|\sum_{i=2}^{+\infty} \frac{1}{a_iQ_iz^{i+1}}\sum_{k=i+1}^{+\infty}a_kQ_kz^k \\
        &= |J|\sum_{i=2}^{+\infty} \frac{1}{a_iQ_iz^{i+1}}\int_0^z h^{(i+1)}(t)\frac{(z-t)^i}{i!}dt 
        \geq \frac{|J|}{z} \sum_{i=2}^{+\infty} \frac{1}{a_iQ_iz^i}\frac{h^{(i+1)}(0)}{i!} \int_0^z (z-t)^i dt \\
        &= \frac{|J|}{z}\sum_{i=2}^{+\infty} \frac{(i+1)a_{i+1}}{b_{i+1}} \int_0^z \left(1-\frac{t}{z}\right)^i dt 
        = |J|\sum_{i=2}^{+\infty} \frac{a_{i+1}}{b_{i+1}} \\
        &\geq |J|\sum_{i=2}^{+\infty} \frac{1}{i} = +\infty.
    \end{align*}
    Which proves that in this case, there is no other steady-state. \newline
\textit{Second case:} $b_i > i^\nu a_i$ for $i\gg 1$ with $\nu > 1$. Using the relation (\ref{equilibre flux J}), we obtain 
    for $J<0$
    \begin{equation*}
        C_{i+1} = \frac{a_izC_i+|J|}{b_{i+1}} ~\forall i\geq 1 \text{ and } C_1 = z.
    \end{equation*}
    Solving the recurring sequence, we obtain for $i\geq 2$
    \begin{equation*}
        C_i = |J|\sum_{k=0}^{i-2} \frac{Q_i}{a_{i-1-k}Q_{i-1-k}}z^k + Q_iz^i.
    \end{equation*}
    First, we prove that this sequence is summable against $(a_i)_{i\geq 1}$. Since $\sum a_iQ_iz^i$ is finite for $z<z_s$, it 
    remains to prove that the first term is summable against $(a_i)_{i\geq 1}$. We have
    \begin{equation*}
        \sum_{i=1}^{+\infty}a_i\sum_{k=0}^{i-2} \left(\frac{Q_i}{a_{i-1-k}Q_{i-1-k}}\right)z^k = \sum_{k=0}^{+\infty} \sum_{i=k+2}^{+\infty} \frac{a_iQ_i}{a_{i-1-k}Q_{i-1-k}}z^k = \sum_{k=0}^{+\infty} \underbrace{\left[\sum_{j=2}^{+\infty}\frac{a_{j+k}Q_{j+k}}{a_{j-1}Q_{j-1}}\right]}_{:= \gamma_k}z^k
    \end{equation*}
    For all $k\geq 0$, we prove that $\gamma_k < +\infty$. Let $k\geq 0$, there exists $j_0 \gg 1$ such that for $j\geq j_0$ we have 
    $Q_{j+k} \leq Q_j$. Thus, using \ref{(H2bis)} the sequence of ratio is bounded by $K \geq 0$ and the hypothesis of the second case, we have
    \begin{equation*}
        \sum_{j\geq j_0} \frac{a_{j+k}Q_{j+k}}{a_{j-1}Q_{j-1}} \leq \sum_{j\geq j_0} \frac{a_{j+k}Q_{j}}{a_{j-1}Q_{j-1}} = \sum_{j\geq j_0} \frac{a_{j+k}}{b_j} \leq \sum_{j\geq j_0} \frac{a_{j+k}}{j^\nu a_j} \leq K^{k+1} \sum_{j\geq j_0} \frac{1}{j^\nu} < +\infty.
    \end{equation*}
    This is where the hypothesis of the second case, mainly $\nu > 1$, is crucial. Otherwise, $\gamma_k = +\infty$ and the computation 
    do not allow us to conclude. Computing the ratio of $\gamma_k$, we have
    \begin{align*}
        \frac{\gamma_{k+1}}{\gamma_k} = \frac{\sum\limits_{j=2}^{+\infty}\dfrac{a_{j+k+1}Q_{j+k+1}}{a_{j-1}Q_{j-1}}}{\sum\limits_{j=2}^{+\infty}\dfrac{a_{j+k}Q_{j+k}}{a_{j-1}Q_{j-1}}} = \frac{\sum\limits_{j=2}^{+\infty}\dfrac{a_{j+k}Q_{j+k}}{a_{j-1}Q_{j-1}}\dfrac{a_{j+k+1}}{b_{j+k+1}}}{\sum\limits_{j=2}^{+\infty}\dfrac{a_{j+k}Q_{j+k}}{a_{j-1}Q_{j-1}}}.
    \end{align*}    
    However, for $k\gg 1$ using the hypothesis of the second case, we have
    \begin{equation*}
        \frac{a_{j+k+1}}{b_{j+k+1}} < \frac{a_{j+k+1}}{(j+k+1)^\nu a_{j+k+1}} < \frac{1}{(k+1)^\nu}.
    \end{equation*}
    Therefore,
    \begin{equation*}
        \frac{\gamma_{k+1}}{\gamma_k} \leq \frac{1}{(k+1)^\nu} \xrightarrow{k\to +\infty} 0.
    \end{equation*}
    Using D'Alembert's ratio test, the sum $\sum a_iC_i$ converges for all $z\geq 0$. To obtain a steady-state it remains to satisfy 
    the first equation and if there is any condition on $\lambda$, $z$ or $J$. The equation
    \begin{equation*}
        a_1 z^2 + z |J| \sum_{k=0}^{+\infty} \gamma_k z^k + \sum_{k=1}^{\infty} a_kQ_kz^{k+1}  = \lambda
    \end{equation*}
    admits, for all $J<0$, a unique $z$ such that the above equation is satisfied, since the left-hand side of the equation is a 
    strictly increasing function of $z$ from 0 to $+\infty$. We therefore obtain an infinity of steady-states parameterized by $J$.

\begin{rmk}
    Note that every sequence, even increasing sequence, cannot be compared in the previous two cases. However, both 
    cases already show the critical point at which the existence of a unique or of multiple steady-states change drastically. 
    In the case of power laws coefficients, the two cases are exhaustive. \newline
\end{rmk}

\subsection{Local exponential stability : sub-critical case}

Let $\Theta_i = Q_iz^i$ for all $i\geq 1$. We introduce the ansatz $C_i = \Theta_i(1+h_i)$ where $h_i$ denotes the fluctuation 
around  $\Theta_i$. We introduce the Hilbert space $\mathcal{H} := \ell^2(\Theta_i)$, and a subspace $\mathcal{H}_\mathcal{D} = \ell^2(\Theta_i(1+\sigma_i^2))$ 
namely
\begin{equation*}
    \mathcal{H} = \left\{ h = (h_i)_{i\geq 1} : \|h\|_{\mathcal{H}} := \left(\sum_{i=1}^{+\infty} \Theta_ih_i^2\right)^{1/2} < \infty \right\},
\end{equation*}
and
\begin{equation*}
    \mathcal{H}_\mathcal{D} = \left\{ h = (h_i)_{i\geq 1} : \|h\|_{\mathcal{H}_\mathcal{D}} := \left(\sum_{i=1}^{+\infty} \Theta_i(1+\sigma_i^2)h_i^2\right)^{1/2} < \infty \right\},
\end{equation*}
where the sequence $(\sigma_i)_{i\geq 1}$ is defined later in (\ref{sigma_i}). The inner scalar product of $\mathcal{H}$ is denoted by 
$\langle \cdot,\cdot \rangle_{\mathcal{H}}$. We define two linear operators $S$ and $P$ on $\mathcal{H}$ by the following weak form
\begin{equation}
    \sum_{i=1}^{+\infty} S_i(h)\Theta_i\varphi_i = \sum_{i=1}^{+\infty} [\varphi_{i+1}-\varphi_i-\varphi_1]a_i\Theta_1\Theta_i(h_i+h_1-h_{i+1}),
    \label{equilibrium sub-critical weak form S}
\end{equation}
and
\begin{equation}
    \sum_{i=1}^{+\infty} P_i(h)\Theta_i\varphi_i = - \varphi_1\left(a_1\Theta_1^2h_2+\sum_{i=1}^{+\infty}a_i\Theta_1\Theta_ih_{i+1}\right),
    \label{equilibrium sub-critical weak form P}
\end{equation}
for any compactly supported sequences $(h_i)_{i\geq 1}$ and $(\varphi_i)_{i\geq 1}$. We denote by $L$ the sum of the previous operators, 
namely
\begin{equation}
    L = S + P.
    \label{equilibrium sub-critical L=S+P}
\end{equation}
We also define a bilinear operator $\Gamma$ by the 
following weak form
\begin{equation*}
    \sum_{i=1}^{+\infty} \Gamma_i(f,g)\Theta_i\varphi_i = \frac{1}{2}\sum_{i=1}^{+\infty}a_i\Theta_1\Theta_i(f_1g_i+f_ig_1)(\varphi_{i+1}-\varphi_i-\varphi_1),
\end{equation*}
for any compactly supported sequences $(f_i)_{i\geq 1}$, $(g_i)_{i\geq 1}$ and $(\varphi_i)_{i\geq 1}$. We note that $\Gamma(0,0)=0$.
\newline

The fluctuation $h$ satisfies 
\begin{equation}
    \frac{d}{dt}h_i(t) = L_i(h(t)) + \Gamma_i(h(t),h(t)).
    \label{equilibrium sub-critical ODE fluctuaction}
\end{equation}
The aforementioned decomposition of the linear operator $L$ is due to the fact that the operator $S$ is the linear operator studied by 
Cañizo and Lods \cite{canizo_exponential_2013} for the Becker-Döring equations. Consequently, it is established that $S$ is a symmetric 
operator in $\mathcal{H}$ and dissipative. Cañizo and Lods proved that the operator $S$ admits a spectral gap in $\mathcal{H}$, and 
they have also proved a lower bound on this spectral gap. In order to prove that the steady-state is then locally exponentially stable, 
it is necessary to establish estimates on the quadratic term $\Gamma$. However, finding estimates on the quadratic term $\Gamma$ in a 
weighted $\ell^2$ space is quite complicated. The approach proposed by Cañizo and Lods \cite{canizo_exponential_2013} is to extend the 
operator $S$ in some weighted $\ell^1$ space, with the aim of showing that the operator S still has a spectral gap in this space. 
Then we control the quadratic term is this weighted $\ell^1$ space, which is more treatable.

Our goal is to prove the same spectral gap holds whenever we control the perturbation operator $P$. \newline

\begin{rmk}
    We may also write the operator $S$ in the strong from, for all compactly supported sequence $h = (h_i)_{i\geq 1}$,
    \begin{equation}
        S_1(h) = -\frac{1}{\Theta_1}\left(W_1(h)+\sum_{k=1}^{+\infty}W_k(h)\right), ~~~ S_i(h) = \frac{1}{\Theta_i}\left(W_{i-1}(h)-W_i(h)\right), ~i\geq 2,
        \label{equilibrium sub-critical strong form S}
    \end{equation}
    where
    \begin{equation}
        W_k(h) = a_k\Theta_1\Theta_k(h_k+h_1-h_{k+1}).
        \label{equilibrium sub-critical def W_k}
    \end{equation}
    As well as the operator $L$, for all compactly supported sequence $h = (h_i)_{i\geq 1}$,
    \begin{equation*}
        L_i(h) = -\sigma_ih_i + \sum_{j=1}^{+\infty}\xi_{i,j}h_j, ~\text{ for } i\geq 1,
    \end{equation*}
    where $\sigma_i$ are defined by
    \begin{equation}
        \sigma_1 = 3a_1z + \sum_{i=1}^{+\infty} a_i\Theta_i, ~~ \sigma_i = a_iz+b_i ~\text{ for }i\geq 2,
        \label{sigma_i}
    \end{equation}
    and $\xi_{i,j} = 0$ for $j\not\in \left\{ 1,i-1,i+1\right\}$. The other $\xi_{i,j}$ are not equal to zero but are not relevant in the 
    following. \newline
\end{rmk}

The operator $L$ is in general not continuous on $\mathcal{H}$, however we give a dense subspace $\mathcal{H}_\mathcal{D}$ of 
$\mathcal{H}$ in which it is bounded:
\begin{lem}
    Assume hypotheses \ref{borne a_i sous lineaire et positivite b_i}-\ref{(H4)}. Then there exists a constant $K>0$, depending only on $z$ and the coefficients $(a_i)_{i\geq 1}$ 
    and $(b_i)_{i\geq 1}$, such that for any compactly supported sequence $h = (h_i)_{i\geq 1}$ we have
    \begin{equation*}
        \|L(h)\|_\mathcal{H} \leq K \|h\|_{\mathcal{H}_\mathcal{D}}.
    \end{equation*}
\end{lem}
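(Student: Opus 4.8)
The plan is to read off the strong form of $L$ recalled in the Remark and reduce everything to the single observation that, for $i\geq 2$, every coupling coefficient in $L_i$ is dominated by $\sigma_i$. Using (\ref{equilibrium sub-critical strong form S})--(\ref{equilibrium sub-critical def W_k}), (\ref{equilibrium sub-critical weak form P}) and the detailed-balance identity (\ref{equation aiQi=bi+1Qi+1}) (which gives $\Theta_{i-1}/\Theta_i = b_i/(za_{i-1})$, hence $W_{i-1}(h)/\Theta_i = b_i(h_{i-1}+h_1-h_i)$ and $W_i(h)/\Theta_i = a_iz(h_i+h_1-h_{i+1})$), one finds for $i\geq 2$
\begin{equation*}
    L_i(h) = b_ih_{i-1} - (a_iz+b_i)h_i + a_iz\,h_{i+1} + (b_i-a_iz)h_1,
\end{equation*}
while the telescoping of the $h_{k+1}$-terms between $S_1$ and $P_1$ leaves $L_1(h) = -\sigma_1 h_1 - \sum_{k\geq 2}a_k\Theta_k h_k$. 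Since $b_i\leq\sigma_i$, $a_iz\leq\sigma_i$ and $|b_i-a_iz|\leq\sigma_i$, I get the crude bound $L_i(h)^2 \leq C\sigma_i^2\big(h_{i-1}^2+h_i^2+h_{i+1}^2+h_1^2\big)$ for all $i\geq 2$ with a universal $C$ (the extra overlap at $i=2$, where $h_{i-1}=h_1$, only changes $C$). Summing against $\Theta_i$ then reduces the lemma to controlling four sums, $\sum_{i\geq 2}\Theta_i\sigma_i^2 h_i^2$, the two shifted sums $\sum_{i\geq 2}\Theta_i\sigma_i^2 h_{i\pm 1}^2$, and $h_1^2\sum_{i\geq 2}\Theta_i\sigma_i^2$, plus the two pieces of $\Theta_1 L_1(h)^2$.

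The first sum is $\leq\|h\|_{\mathcal{H}_\mathcal{D}}^2$ by definition. For the shifted sums I would reindex ($j=i-1$, resp.\ $j=i+1$) and use that consecutive values of $\Theta$ and of $\sigma$ are comparable: by \ref{(H2)} the ratio $\Theta_{i+1}/\Theta_i = zQ_{i+1}/Q_i$ converges (to $z/z_s$), hence is bounded above and below uniformly in $i$; and by \ref{(H2bis)} together with \textbf{Remark \ref{rmk limite bi/ai}} the ratios $a_{i+1}/a_i$ and $b_{i+1}/b_i$ converge to positive limits, so from $\sigma_i=a_iz+b_i\geq\underline b$ one obtains $M^{-1}\sigma_i\leq\sigma_{i\pm 1}\leq M\sigma_i$ for some $M\geq 1$. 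Therefore $\Theta_{j+1}\sigma_{j+1}^2 \leq \big(\sup_i \Theta_{i+1}/\Theta_i\big)M^2\sigma_j^2 \leq K_1(1+\sigma_j^2)\Theta_j$, and similarly after the other shift, so both shifted sums are $\lesssim\|h\|_{\mathcal{H}_\mathcal{D}}^2$. For the term $h_1^2\sum_{i\geq 2}\Theta_i\sigma_i^2$, write $\Theta_i\sigma_i^2 = Q_iz^i(a_iz+b_i)^2 \lesssim i^{2\gamma}Q_iz^i$ with $\gamma=\max(\alpha,\beta)$ (using \ref{borne a_i sous lineaire et positivite b_i} and \ref{(H4)}); since $z<z_s$ and multiplying a power series by a polynomial weight does not change its radius of convergence, this series converges, and $h_1^2\leq\Theta_1^{-1}\|h\|_\mathcal{H}^2\leq\Theta_1^{-1}\|h\|_{\mathcal{H}_\mathcal{D}}^2$.

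Finally $\Theta_1 L_1(h)^2 \leq 2\Theta_1\sigma_1^2 h_1^2 + 2\Theta_1\big(\sum_{k\geq 2}a_k\Theta_k|h_k|\big)^2$; the first piece is $\leq 2\|h\|_{\mathcal{H}_\mathcal{D}}^2$, and Cauchy--Schwarz gives $\big(\sum_{k\geq 2}a_k\Theta_k|h_k|\big)^2 \leq \big(\sum_{k\geq 2}a_k^2\Theta_k\big)\|h\|_\mathcal{H}^2$ with $\sum_k a_k^2\Theta_k \lesssim \sum_k k^{2\alpha}Q_kz^k<\infty$ again by the radius-of-convergence argument. Collecting these estimates yields $\|L(h)\|_\mathcal{H}^2 \leq K^2\|h\|_{\mathcal{H}_\mathcal{D}}^2$ with $K$ depending only on $z$ and on $(a_i)_{i\geq 1}$, $(b_i)_{i\geq 1}$, as claimed. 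As for where the work sits: there is no genuinely hard step, the argument being a bookkeeping of index shifts; the one point needing care is that every $L_i$ couples to $h_1$ and not merely to its nearest neighbours, so the ``$h_1$ column'' must be summed separately, and its control -- like the control of $L_1$ itself -- rests on $\sum_i i^{2\gamma}Q_iz^i<\infty$, i.e.\ on $z$ lying strictly below the radius of convergence $z_s$, which is exactly the situation $\lambda<\lambda_s$ of \textbf{Theorem \ref{thm local expo convergence}}.
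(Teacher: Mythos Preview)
Your argument is correct. It differs from the paper's proof in a useful way: the paper splits $L=S+P$, invokes \cite[Lemma 2.4]{canizo_exponential_2013} as a black box for the bound on $S$, and then treats $P$ by duality (testing against $\phi\in\mathcal{H}$ and applying Cauchy--Schwarz together with (\ref{equation aiQi=bi+1Qi+1}) to convert $a_i^2\Theta_i h_{i+1}^2$ into $b_{i+1}^2\Theta_{i+1}h_{i+1}^2\leq\sigma_{i+1}^2\Theta_{i+1}h_{i+1}^2$). You instead compute the strong form of $L_i$ in full, exploit the exact cancellation of the $h_{k+1}$-terms between $S_1$ and $P_1$, and then run a pointwise bound plus index shifts, using \ref{(H2)}--\ref{(H2bis)} (and \textbf{Remark~\ref{rmk limite bi/ai}}) to get the uniform comparability of consecutive $\Theta_i$ and $\sigma_i$. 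What your route buys is self-containment: you redo the content of \cite[Lemma 2.4]{canizo_exponential_2013} rather than cite it, and you make explicit where $z<z_s$ is used (convergence of $\sum_i i^{2\gamma}Q_iz^i$ for the $h_1$-column and of $\sum_i a_i^2\Theta_i$ for $L_1$). What the paper's route buys is brevity, and it isolates exactly which piece ($P$) is new relative to the Becker--D\"oring linearisation. Both are valid; yours is a clean direct alternative.
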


\begin{proof}
    Using triangular inequality, it is sufficient to prove this bound separately for $S$ then $P$. We already know from Cañizo and 
    Lods \cite[Lemma 2.4]{canizo_exponential_2013} that this is true for the operator $S$. We prove it for the operator $P$. We show 
    equivalently that 
    \[ \left| \langle P(h), \phi \rangle_\mathcal{H} \right| \leq K \|h\|_{\mathcal{H}_\mathcal{D}}\|\phi\|_{\mathcal{H}}, \text{ for all } \phi \in \mathcal{H}. \]
    We have from the weak form of $P$ (\ref{equilibrium sub-critical weak form P}), and using Cauchy-Schwarz's inequality,
    \begin{align*}
        \left| \langle P(h), \phi \rangle_\mathcal{H} \right| &\leq \sqrt{\Theta_1}\|\phi\|_{\mathcal{H}}\left(a_1\Theta_1|h_2| + \sum_{i=1}^{+\infty}a_i\Theta_i|h_{i+1}| \right) \\
        &\leq K\|\phi\|_{\mathcal{H}}\left(\sum_{i=1}^{+\infty}\Theta_i\right)^{\frac{1}{2}}\left(\sum_{i=1}^{+\infty}a_i^2\Theta_i|h_{i+1}|^2\right)^{\frac{1}{2}}.
    \end{align*}
    However, using \ref{(H2)} and the definition of $(\sigma_i)_{i\geq 1}$ (\ref{sigma_i}), we have
    \begin{align*}
        \left(\sum_{i=1}^{+\infty}a_i^2\Theta_i|h_{i+1}|^2\right)^{\frac{1}{2}} &= \sum_{i=1}^{+\infty} \frac{\Theta_{i+1}}{\Theta_i}b_{i+1}^2\Theta_{i+1}|h_{i+1}|^2 \\
        &\leq K\sum_{i=2}^{+\infty}\sigma_i^2\Theta_i|h_i|^2 \leq K\|h\|_{\mathcal{H}_\mathcal{D}}.
    \end{align*}
    Combining the previous inequalities, we obtain the desired estimation on $P$ and therefore on $L$.
\end{proof}

\begin{prop}
    Assume \ref{borne a_i sous lineaire et positivite b_i}-\ref{(H4)}. If (\ref{equilibrium sub-critical condition P-norm spectral gap}) is satisfied, then the operator $L$ 
    admits a spectral gap $\mu_1>0$.
\end{prop}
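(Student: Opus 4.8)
The plan is to treat $L=S+P$ as a perturbation of the self-adjoint operator $S$, exploiting that $P$ is genuinely bounded on $\mathcal{H}$ in the present regime $z<z_s$. First I would record the structure of $S$: from (\ref{equilibrium sub-critical weak form S}), $\langle S(h),h\rangle_{\mathcal{H}}=-\mathcal{D}(h)$ with Dirichlet form $\mathcal{D}(h)=\sum_{i\geq1}a_i\Theta_1\Theta_i(h_{i+1}-h_i-h_1)^2$, so $S$ is self-adjoint, $\leq0$, with $\ker S=\mathbb{R}\,e$, $e=(i)_{i\geq1}$; here $e\in\mathcal{H}$ because $\lambda<\lambda_s$ forces $z<z_s$, so by \ref{(H2)} the weights $\Theta_i=Q_iz^i$ decay geometrically and $\sum_i i^{2\beta}\Theta_i<\infty$ (so that (\ref{equilibrium sub-critical condition P-norm spectral gap}) is meaningful). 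By Cañizo--Lods \cite{canizo_exponential_2013}, $S$ has a spectral gap on $e^{\perp}$ with an explicit lower bound $\mu_S>0$ governed by exactly the discrete Muckenhoupt constant $A:=\sup_{k\geq1}\big(\sum_{j>k}\Theta_j\big)\big(\sum_{j\leq k}\tfrac{1}{a_j\Theta_j}\big)$ appearing in (\ref{equilibrium sub-critical condition P-norm spectral gap}) (essentially $\mu_S\gtrsim z/A$, via the discrete Hardy inequality with weights $\Theta_j$, $a_j\Theta_j$). Next, (\ref{equilibrium sub-critical weak form P}) shows $P_i(h)=0$ for $i\geq2$ and $P_1(h)=-\mathcal{L}(h)$ with $\mathcal{L}(h)=a_1\Theta_1h_2+\sum_{i\geq1}a_i\Theta_ih_{i+1}$; rewriting $a_i\Theta_i$ through $b_{i+1}\Theta_{i+1}$ and using Cauchy--Schwarz together with \ref{(H4)}, $\mathcal{L}$ is a bounded functional on $\mathcal{H}=\ell^2(\Theta)$ with $\|\mathcal{L}\|\lesssim b\,(\sum_i i^{2\beta}\Theta_i)^{1/2}$, hence $P$ is bounded on $\mathcal{H}$, and moreover $\mathcal{L}(e)>0$.

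Then I would decompose $\mathcal{H}=\mathbb{R}\,e\oplus e^{\perp}$ and write $L$ in $2\times2$ block form relative to this splitting. Since $Se=0$ and $P$ has range in $\mathbb{R}\,e_1$, the scalar entry is $\ell_{11}=\langle P\widehat e,\widehat e\rangle_{\mathcal{H}}=-\,z\,\mathcal{L}(e)/\|e\|_{\mathcal{H}}^2<0$; the $e^{\perp}$-block is $S|_{e^{\perp}}+P_{22}$ with $S|_{e^{\perp}}\leq-\mu_S$ and $\|P_{22}\|\leq\|P\|$; and the off-diagonal blocks satisfy $\|\ell_{12}\|\lesssim z\,\|\mathcal{L}\|/\|e\|_{\mathcal{H}}$ and $\|\ell_{21}\|\lesssim\mathcal{L}(e)\,\mathrm{dist}_{\mathcal{H}}(e_1,\mathbb{R}\,e)/\|e\|_{\mathcal{H}}$. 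The heart of the matter is a Schur-complement argument on the characteristic equation $\zeta=\ell_{11}+\ell_{12}(S|_{e^{\perp}}+P_{22}-\zeta)^{-1}\ell_{21}$: provided $\|P\|<\mu_S$ and the coupling $\|\ell_{12}\|\,\|\ell_{21}\|$ is small compared with $|\ell_{11}|\,(\mu_S-\|P\|)$, this equation has a single root, which is a perturbation of $\ell_{11}<0$ and therefore stays in $\{\operatorname{Re}\zeta<0\}$, while the remainder of $\sigma(L)$ lies in $\{\operatorname{Re}\zeta\leq-\mu_S+\|P\|\}$; one takes $\mu_1$ to be the resulting margin. Carrying out the estimates — plugging in $\mu_S\gtrsim z/A$, $\|\mathcal{L}\|\lesssim b(\sum_i i^{2\beta}\Theta_i)^{1/2}$, $\|e_1\|_{\mathcal{H}}=\sqrt z$, and observing that $z=\|e_1\|_{\mathcal{H}}^2$ accounts for most of $\|e\|_{\mathcal{H}}^2$, so that $e_1$ and $e$ are nearly aligned and $\|\ell_{21}\|$ is correspondingly small — one finds the requirement is precisely $\tfrac{8b}{\sqrt z}\big(\sum_i i^{2\beta}Q_iz^i\big)^{1/2}A<1$, i.e.\ (\ref{equilibrium sub-critical condition P-norm spectral gap}). (Alternatively one can establish a coercivity estimate $\operatorname{Re}\langle -L(h),h\rangle_{\mathcal{H}}=\mathcal{D}(h)+\Theta_1\operatorname{Re}(\overline{h_1}\mathcal{L}(h))\geq\mu_1\|h\|_{\mathcal{H}}^2$ by splitting $h=g+ce$ and absorbing the cross term by Young's inequality at the Hardy scale; this yields $\sigma(L)\subset\{\operatorname{Re}\zeta\leq-\mu_1\}$ and $\|e^{tL}\|_{\mathcal{H}\to\mathcal{H}}\leq e^{-\mu_1 t}$ directly.)

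The main obstacle is the quantitative bookkeeping of this last step: one must use the sharp Cañizo--Lods lower bound for $\mu_S$ in terms of $A$ (equivalently the sharp constant $\leq 4A$ in the discrete Hardy/Muckenhoupt inequality with weights $\Theta_j$, $a_j\Theta_j$), the sharp $\mathcal{H}$-bound of $\mathcal{L}$ from \ref{(H4)} and Cauchy--Schwarz, and the observation that $\|\ell_{21}\|$ is smaller than a naive estimate because $e_1$ is almost parallel to $e$; one then tracks how $b$, $(\sum_i i^{2\beta}Q_iz^i)^{1/2}$ and $1/\sqrt z$ recombine, the constant $8$ absorbing the various factors $\tfrac12$ from the Schur/Young estimates and from the sharp Hardy constant. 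Everything else — the block computation of $\ell_{11},\ell_{12},\ell_{21},P_{22}$, the finiteness of $A$ and $\sum_i i^{2\beta}Q_iz^i$ under \ref{borne a_i sous lineaire et positivite b_i}-\ref{(H4)}, and (if one uses the coercivity route) the passage from compactly supported sequences to $\mathcal{H}_{\mathcal{D}}$ via the preceding boundedness lemma — is routine.
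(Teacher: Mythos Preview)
Your approach differs substantially from the paper's. The paper's proof is short: bound $\opnorm{P}_{\mathcal{H}\to\mathcal{H}}$ via Cauchy--Schwarz and \ref{(H4)} to get $\opnorm{P}\leq\tfrac{2b}{\sqrt z}\big(\sum_i i^{2\beta}Q_iz^i\big)^{1/2}$, cite the Cañizo--Lods lower bound $\mu_0\geq\tfrac{1}{4D}$ for the spectral gap of $S$ (with $D$ exactly your Muckenhoupt constant $A$), observe that (\ref{equilibrium sub-critical condition P-norm spectral gap}) is precisely the inequality $\opnorm{P}<\tfrac{1}{4D}\leq\mu_0$, and conclude by a one-line appeal to the bounded-perturbation theorem \cite[Theorem III.1.3]{engel_nagel}. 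There is no block decomposition, no Schur complement, no mention of the eigenvector $e$. What your route buys is care on a point the paper leaves implicit: since $0\in\sigma(S)$ through $Se=0$, a bare perturbation bound only localises the displaced eigenvalue in $B(0,\opnorm{P})$, which on its own does not force negative real part; your computation $\ell_{11}<0$ together with the Schur (or coercivity) argument supplies exactly that missing piece. The cost is substantially more machinery, and your assertion that the resulting constraints collapse \emph{precisely} to (\ref{equilibrium sub-critical condition P-norm spectral gap}) with constant $8$ is stated rather than derived --- you yourself flag this quantitative bookkeeping as the main obstacle, and several of the ingredients you list (the near-alignment of $e_1$ with $e$ controlling $\|\ell_{21}\|$, the form $\mu_S\gtrsim z/A$ versus the paper's $1/(4A)$) would have to be made sharp before the constant can be pinned down.
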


\begin{proof}
    As mentioned earlier, the operator $S$ has been studied by Cañizo and Lods \cite{canizo_exponential_2013}. They proved that the 
    eigenvalues are non-positive and that $S$ admits a spectral gap $\mu_0>0$ when $z \in (0,z_s)$. Moreover, they found \cite[Theorem 2.15]{canizo_exponential_2013}
    a lower bound on the spectral gap; namely
    \[ \frac{1}{4D}\leq \mu_0, \]
    where 
    \begin{equation}
        D := \sup_{k\geq 1} \left(\sum_{j=k+1}^{+\infty} Q_jz^j\right)\left(\sum_{j=1}^k\frac{1}{a_jQ_jz^j}\right) \in (0,+\infty).
        \label{equilibrium sub-critical definition D}
    \end{equation}
    The fact that $D$ is finite comes from our hypotheses on the coefficients \ref{borne a_i sous lineaire et positivite b_i}-\ref{(H4)}. 
    Moreover, for all $h\in \mathcal{H}$, using the definition of $P$ and \ref{(H4)}, we have
    \begin{align*}
        \|P(h)\|_{\mathcal{H}}^2 \leq \frac{4b^2}{z}\left(\sum_{k=1}^{+\infty}k^\beta\Theta_kh_k\right)^2.
    \end{align*}
    Then using Cauchy-Schwarz's inequality, we obtain
    \begin{align*}
        \|P(h)\|_{\mathcal{H}}^2 \leq \frac{4b^2}{z}\left(\sum_{k=1}^{+\infty}k^{2\beta}Q_kz^k\right)\|h\|_{\mathcal{H}}^2,
    \end{align*}
    this means that
    \begin{equation}
        \opnorm{P} \leq \frac{2b}{\sqrt{z}}\sqrt{\sum_{k=1}^{+\infty}k^{2\beta}Q_kz^k} < +\infty.
        \label{equilibrium sub-critical upper bound P operator}
    \end{equation}
    We now compare the bound on the norm of $P$ and the one on the spectral gap $\mu_0$. A sufficient condition for the norm $\opnorm{P}$ 
    to be smaller than the spectral gap $\mu_0$ is
    \begin{equation*}
        \frac{1}{4D} > \frac{2b}{\sqrt{z}}\sqrt{\sum_{k=1}^{+\infty}k^{2\beta}Q_kz^k},
    \end{equation*}
    which is exactly
    \begin{equation}
        \frac{8b}{\sqrt{z}}\sqrt{\sum_{i=1}^{+\infty}i^{2\beta}Q_iz^i}\sup_{k\geq 1} \left(\sum_{j=k+1}^{+\infty} Q_jz^j\right)\left(\sum_{j=1}^k\frac{1}{a_jQ_jz^j}\right)< 1.
        \tag{\ref{equilibrium sub-critical condition P-norm spectral gap}}
    \end{equation}
    Using \cite[Theorem III.1.3]{engel_nagel} on bounded perturbation operator, we have that under the condition 
    (\ref{equilibrium sub-critical condition P-norm spectral gap}) the linearized operator $L$ admits a spectral gap $\mu_1>0$.
\end{proof}

To prove that the steady-state $(Q_iz^i)_{i\geq 1}$ of the full system (\ref{systeme equations BD-Depoly}) is locally exponentially 
stable, we now need to find some estimates on the quadratic term $\Gamma(h,h)$. As previously announced, we extend the operator $L$ in 
a weighted $\ell^1$ space. \newline

Let $\eta \in (0,\frac{1}{2}\log(\frac{z_s}{z}))$. We introduce two subspaces of $\ell^1$, $\mathcal{Z} = \ell^1(e^{\eta i}\Theta_i)$ 
and $\mathcal{Z}_\mathcal{D} = \ell^1((1+\sigma_i)e^{\eta i}\Theta_i)$ namely
\begin{equation*}
    \mathcal{Z} = \left\{ h = (h_i)_{i\geq 1} : \|h\|_{\mathcal{Z}} := \sum_{i=1}^{+\infty} e^{\eta i}\Theta_i|h_i|< +\infty \right\},
\end{equation*}
and
\begin{equation*}
    \mathcal{Z}_\mathcal{D} = \left\{ h = (h_i)_{i\geq 1} : \|h\|_{\mathcal{Z}_\mathcal{D}} := \sum_{i=1}^{+\infty} (1+\sigma_i)e^{\eta i}\Theta_i|h_i| < +\infty \right\}.
\end{equation*}
The choice of $\eta$ ensures that the space $\mathcal{Z}$ is larger than the previous Hilbert space $\mathcal{H} = \ell^2(\Theta)$, since
\begin{equation*}
    \|h\|_{\mathcal{Z}} =  \sum_{k=1}^{+\infty}\exp(\eta i)\Theta_i|h_i| \leq \left(\sum_{i=1}^{+\infty} h_i^2\Theta_i \right)^{\frac{1}{2}}\left(\sum_{i=1}^{+\infty} \exp(2\eta i)\Theta_i\right)^{\frac{1}{2}} = \left(\sum_{i=1}^{+\infty} \exp(2\eta i)\Theta_i\right)^{\frac{1}{2}} \|h\|_\mathcal{H},
\end{equation*}
and the last factor in the previous inequality is finite for $\eta < \frac{1}{2}\log(\frac{z_s}{z})$. Indeed, we have
\begin{equation*}
    \sum_{i=1}^{+\infty}Q_iz^i\exp(2\eta i) = \sum_{i=1}^{+\infty} Q_ir^i \text{ where } r = \exp(2\eta +\log(z)),
\end{equation*}
and recalling that the radius of convergence of the power series $\sum iQ_iz^i$ is $z_s$, then the above sum is finite 
whenever $r<z_s$, this is exactly when $\eta < \frac{1}{2}\log(z_s/z)$. \newline

As before, the operator $L$ is in general not continuous on $\mathcal{Z}$, however we give a dense subspace $\mathcal{Z}_\mathcal{D}$ of 
$\mathcal{Z}$ in which it is bounded:
\begin{lem}
    Assume hypotheses \ref{borne a_i sous lineaire et positivite b_i}-\ref{(H4)}. Then there exists a constant $K>0$, depending only on $(a_i)_{i\geq 1}$, $(b_i)_{i\geq 1}$, 
    $z$ and $\eta$, such that for all compactly supported sequence $h=(h_i)_{i\geq 1}$ we have
    \begin{equation*}
        \|L(h)\|_{\mathcal{Z}} \leq K\|h\|_{\mathcal{Z}_\mathcal{D}}.
    \end{equation*}
\end{lem}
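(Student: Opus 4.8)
The plan is to decompose $L = S + P$ and bound each piece separately in the $\mathcal{Z} \to \mathcal{Z}_\mathcal{D}$ sense, exactly as was done in the Hilbert-space lemma above. The operator $S$ is the Cañizo–Lods operator, so the bound $\|S(h)\|_{\mathcal{Z}} \leq K\|h\|_{\mathcal{Z}_\mathcal{D}}$ should follow directly from their work (the weighted $\ell^1$ estimate is precisely what they establish when extending $S$ from $\mathcal{H}$ to a weighted $\ell^1$ space); I would cite the relevant lemma in \cite{canizo_exponential_2013} and then concentrate on the perturbation $P$.

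For the $P$-estimate I would start from the strong form implied by the weak form (\ref{equilibrium sub-critical weak form P}): only the first coordinate of $P(h)$ is nonzero, with
\begin{equation*}
    \Theta_1 P_1(h) = -\left(a_1\Theta_1^2 h_2 + \sum_{i=1}^{+\infty} a_i\Theta_1\Theta_i h_{i+1}\right).
\end{equation*}
Hence $\|P(h)\|_{\mathcal{Z}} = e^{\eta}\Theta_1|P_1(h)| \leq e^{\eta}\left(a_1\Theta_1^2|h_2| + \sum_{i\geq 1} a_i\Theta_1\Theta_i|h_{i+1}|\right)$. The key algebraic step is to rewrite $a_i\Theta_i = a_iQ_iz^i = b_{i+1}Q_{i+1}z^i = (b_{i+1}/z)\Theta_{i+1}$, using the identity (\ref{equation aiQi=bi+1Qi+1}), so that each term in the sum becomes $(\Theta_1 b_{i+1}/z)\Theta_{i+1}|h_{i+1}|$. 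Then $b_{i+1} \leq \sigma_{i+1}$ (from the definition (\ref{sigma_i}) of $\sigma_i$) and $1 \leq e^{\eta(i+1)}$, so
\begin{equation*}
    \sum_{i=1}^{+\infty} a_i\Theta_1\Theta_i|h_{i+1}| \leq \frac{\Theta_1}{z}\sum_{i=1}^{+\infty}\sigma_{i+1}\Theta_{i+1}|h_{i+1}| \leq \frac{\Theta_1}{z}\sum_{j=2}^{+\infty}(1+\sigma_j)e^{\eta j}\Theta_j|h_j| \leq \frac{\Theta_1}{z}\|h\|_{\mathcal{Z}_\mathcal{D}}.
\end{equation*}
The stray $a_1\Theta_1^2|h_2|$ term is handled the same way (it is bounded by a constant times the $j=2$ contribution to $\|h\|_{\mathcal{Z}_\mathcal{D}}$), and collecting constants gives $\|P(h)\|_{\mathcal{Z}} \leq K\|h\|_{\mathcal{Z}_\mathcal{D}}$ with $K$ depending only on $z$, $\eta$, $\Theta_1 = z$, and the coefficients. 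Adding the two bounds via the triangle inequality finishes the proof.

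The main obstacle is not really in the $P$-estimate, which is short once the $a_i\Theta_i = (b_{i+1}/z)\Theta_{i+1}$ substitution is in place; it is in making sure the constant $K$ genuinely depends only on the advertised quantities and, more importantly, in verifying that the Cañizo–Lods weighted-$\ell^1$ estimate for $S$ applies with our weight $e^{\eta i}\Theta_i$ and our choice of $\eta \in (0, \tfrac12\log(z_s/z))$ — i.e. that $\eta$ is small enough that their geometric-series arguments (which require the weight ratio $e^{\eta}$ to stay below $z_s/z$ in the appropriate places) go through. This is exactly where the restriction on $\eta$ was built in, so it should be a matter of quoting the hypothesis and their lemma rather than a genuine difficulty, but it is the point one must check carefully.
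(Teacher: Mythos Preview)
Your proposal is correct and follows the same architecture as the paper: split $L=S+P$, invoke \cite[Lemma 3.3]{canizo_exponential_2013} for $S$, and handle $P$ by hand using the identity $a_i\Theta_i=(b_{i+1}/z)\Theta_{i+1}$. The one notable difference is in the final step of the $P$-estimate: the paper bounds $b_k\leq bk^\beta$ via \ref{(H4)} and then $k^\beta\leq K(\eta,\beta)e^{\eta k}$, whereas you bound $b_{i+1}\leq\sigma_{i+1}$ directly from the definition (\ref{sigma_i}) and then insert $1\leq e^{\eta(i+1)}$. Your route is slightly cleaner and in fact does not require \ref{(H4)} at all for this lemma; the paper's detour through $k^\beta$ is unnecessary here. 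Your caution about the applicability of the Ca\~nizo--Lods $\ell^1$ estimate under the constraint $\eta\in(0,\tfrac12\log(z_s/z))$ is well placed but, as you note, is exactly what their Lemma~3.3 covers.
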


\begin{proof}
    Using triangular inequality, it is sufficient to prove this bound separately for $S$ then $P$. We already know form Cañizo and 
    Lods \cite[Lemma 3.3]{canizo_exponential_2013} that this is true for the operator $S$. We prove it for the operator $P$. Recalling 
    the definition of $P$ and using \ref{(H4)}, we have
    \begin{align*}
        \|P(h)\|_{\mathcal{Z}} &\leq \exp(\eta)z\left(\frac{1}{z}\left(b_2\Theta_2|h_2| + \sum_{k=2}^{+\infty}b_k\Theta_k|h_k|\right)\right) \\
        &\leq 2b\exp(\eta)\sum_{k=1}^{+\infty}k^\eta \Theta_k|h_k|.
    \end{align*}
    Then, using that $k^\beta \leq K(\eta,\beta) \exp(\eta k)$ for all $k\geq 1$ and the positivity of $\sigma_k$ (\ref{sigma_i}) 
    for all $k\geq 1$, we have
    \begin{align*}
        \|P(h)\|_{\mathcal{Z}} &\leq K(\eta,\beta) \sum_{k=1}^{+\infty}\exp(\eta k) \Theta_k|h_k| \\
        &\leq K(\eta,\beta) \sum_{k=1}^{+\infty} (1+\sigma_k)\exp(\eta k) \Theta_k|h_k| = C\|h\|_{\mathcal{Z}_\mathcal{D}}.
    \end{align*}
    Combining this estimation with the one on $S$, we obtain the desired estimation.
\end{proof}

Following Cañizo and Lods \cite{canizo_exponential_2013}, we extend the operator $L$ to an unbounded operator on $\mathcal{Z}$ that 
has a positive spectral gap using techniques based upon a suitable decomposition of the linearized operator into a dissipative part 
and a “regularizing” part. To extend the operator, we use \cite[Theorem 3.1]{canizo_exponential_2013} which is a slight improvement 
over some of the consequences of the work of Gualdani, Mischler and Mouhot \cite{gualdani_factorization_2017}.

\begin{thm}[\textbf{Extension of the spectral gap}]
    Assume hypotheses \ref{borne a_i sous lineaire et positivite b_i}-\ref{(H4)}. Let $\eta \in (0,\frac{1}{2}\log(\frac{z_s}{z}))$. If (\ref{equilibrium sub-critical condition P-norm spectral gap}) 
    is satisfied, then the extension of $L$ on $\mathcal{Z}_\mathcal{D}$ that we note $\Lambda$ generates a strongly continuous semigroup
    $(\exp(t\Lambda))_{t\geq 0}$ on $\mathcal{Z}_\mathcal{D}$ and there exists $\mu_\ast \in (0,\mu_1]$, a constant $K>0$ such that
    \begin{equation}
        \|\exp(t\Lambda)x\|_{\mathcal{Z}} \leq K\exp(-\mu_\ast t)\|x\|_{\mathcal{Z}}, ~\forall t\geq 0 \text{ and } x\in \mathcal{Z}.
    \end{equation}
    \label{thm semigroup extension gap}
\end{thm}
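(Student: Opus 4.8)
The plan is to deduce the statement from the abstract enlargement theorem \cite[Theorem 3.1]{canizo_exponential_2013} (itself a refinement of \cite{gualdani_factorization_2017}), applied with the small space $\mathcal{H}=\ell^2(\Theta)$ and the large space $\mathcal{Z}=\ell^1(e^{\eta i}\Theta_i)$; the continuous dense embedding $\mathcal{H}\hookrightarrow\mathcal{Z}$ was established just above (this is precisely where $\eta<\tfrac12\log(z_s/z)$ is used). The input we already have is the spectral gap of the \emph{full} operator $L$ in the \emph{small} space: by the preceding Proposition, under \eqref{equilibrium sub-critical condition P-norm spectral gap} the spectrum of $L$ in $\mathcal{H}$ lies in $\{\mathrm{Re}\,\zeta\le-\mu_1\}$ for some $\mu_1>0$ (in contrast with the classical Becker–Döring linearisation, there is here no neutral mode, the system being open). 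What remains is to exhibit a splitting $L=\mathcal{A}+\mathcal{B}$ fulfilling the hypotheses of the enlargement theorem: $\mathcal{B}$ must generate a $C_0$-semigroup on both $\mathcal{Z}$ and $\mathcal{H}$ that decays at least like $e^{-\mu_\ast t}$, and $\mathcal{A}$ must be a bounded operator from $\mathcal{Z}$ into the smaller space.

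For the splitting I would follow Cañizo and Lods verbatim on the $S$-part. Fix a truncation index $R\ge1$ and a large constant $\theta>0$, and decompose $S=\mathcal{A}_S+\mathcal{B}_S$ with $\mathcal{A}_S h:=\theta\,\mathbf{1}_{\{i\le R\}}h$ (the regularizing finite-rank piece in \cite{canizo_exponential_2013}), then set
\begin{equation*}
    \mathcal{A}:=\mathcal{A}_S+P,\qquad \mathcal{B}:=\mathcal{B}_S=S-\mathcal{A}_S,
\end{equation*}
so that $L=S+P=\mathcal{A}+\mathcal{B}$. Since $\mathcal{A}_S$ is supported on the finite set $\{1,\dots,R\}$, on which all weighted $\ell^p$-norms are equivalent, it is bounded from $\mathcal{Z}$ into $\mathcal{H}_\mathcal{D}$; and $P$, which by \ref{(H4)} and $k^\beta=o(e^{\eta k})$ is bounded on $\mathcal{Z}$ and whose range is contained in the first coordinate axis, is likewise bounded from $\mathcal{Z}$ into $\mathcal{H}_\mathcal{D}$. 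Hence $\mathcal{A}$ is bounded from $\mathcal{Z}$ to $\mathcal{H}_\mathcal{D}$, which is the regularization hypothesis.

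The core of the argument is the hypodissipativity of $\mathcal{B}=\mathcal{B}_S$ on $\mathcal{Z}$ and on $\mathcal{H}$, which is precisely what \cite[Lemmas~2.4--2.5 and 3.3]{canizo_exponential_2013} furnish for the Becker–Döring operator $S$. On $\mathcal{Z}$ one differentiates $t\mapsto\|e^{t\mathcal{B}}h\|_{\mathcal{Z}}$ and uses the strong form $S_i(h)=b_i(h_{i-1}+h_1-h_i)-a_iz(h_i+h_1-h_{i+1})$ for $i\ge2$; reindexing the nearest-neighbour terms against the weight $w_i:=e^{\eta i}\Theta_i$ and invoking the identities $b_{i+1}\Theta_{i+1}=a_iz\Theta_i$ and $a_{i-1}z\Theta_{i-1}=b_i\Theta_i$ coming from \eqref{equation aiQi=bi+1Qi+1}, the coefficient of $w_i|h_i|$ becomes, for $i$ large, $(1-e^{-\eta})\,a_i\big(ze^\eta-b_i/a_i\big)$. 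By \ref{(H2)}--\ref{(H2bis)} one has $b_i/a_i\to z_s$ (cf.\ \textbf{Remark \ref{rmk limite bi/ai}}; note that, $z_s$ being the radius of convergence of $\sum a_iQ_iz^i$, those hypotheses force $l=1$), while $ze^\eta<\sqrt{zz_s}<z_s$ when $\eta<\tfrac12\log(z_s/z)$; hence this coefficient is $\le-\mu'<0$ for all $i$ beyond some index. Choosing $R$ past that index and $\theta$ large enough that the $-\theta$ term dominates the remaining finitely many coordinates (the coordinate $i=1$, whose diagonal weight $\sigma_1$ is anyway large, included), one gets $\|e^{t\mathcal{B}}\|_{\mathcal{Z}\to\mathcal{Z}}\le Ce^{-\mu_\ast t}$ for some $\mu_\ast\in(0,\mu_1]$; the companion bound on $\mathcal{H}$ follows from the symmetry and spectral gap of $S$ in $\mathcal{H}$ together with the coercive term $-\theta\sum_{i\le R}\Theta_ih_i^2$ controlling the would-be neutral mode, exactly as in \cite{canizo_exponential_2013}.

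Having checked that $\mathcal{A}$ is bounded from $\mathcal{Z}$ to $\mathcal{H}_\mathcal{D}$, that $\mathcal{B}$ is hypodissipative with rate $\mu_\ast\le\mu_1$ in both $\mathcal{Z}$ and $\mathcal{H}$, and that the spectrum of $L$ in $\mathcal{H}$ is contained in $\{\mathrm{Re}\,\zeta\le-\mu_1\}$, \cite[Theorem 3.1]{canizo_exponential_2013} applies and yields: the extension $\Lambda$ of $L$ with domain $\mathcal{Z}_\mathcal{D}$ generates a $C_0$-semigroup on $\mathcal{Z}$; its spectrum in $\{\mathrm{Re}\,\zeta>-\mu_\ast\}$ consists of at most finitely many eigenvalues, which are also eigenvalues of $L$ in $\mathcal{H}$ and hence do not exist; and consequently $\|e^{t\Lambda}x\|_{\mathcal{Z}}\le Ke^{-\mu_\ast t}\|x\|_{\mathcal{Z}}$ for all $t\ge0$. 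The main obstacle is the dissipative estimate for $\mathcal{B}$ in the weighted $\ell^1$-space: keeping track of the fragmentation/coagulation nearest-neighbour terms against the exponential weight and verifying that, for large $i$, the diagonal loss $-\sigma_i$ beats them with a uniform margin. This is exactly the Cañizo–Lods computation for $S$; the only new input is that the extra operator $P$ is harmless, being bounded on $\mathcal{Z}$ with finite-dimensional range, so that it can simply be absorbed into the regularizing part $\mathcal{A}$.
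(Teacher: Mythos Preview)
Your proposal is correct and follows essentially the same route as the paper: both invoke \cite[Theorem~3.1]{canizo_exponential_2013} with the Ca\~nizo--Lods splitting of $S$, and both absorb the perturbation $P$ into the regularizing part $\mathcal{A}$ (the paper's proof says explicitly ``the main modification is to add the operator $P$ in the regularizing part''), checking that $P$ is bounded from $\mathcal{Z}$ into the small space because its range is one-dimensional. Your write-up is in fact more detailed than the paper's, which leaves the hypodissipativity of $\mathcal{B}=\mathcal{B}_S$ in $\mathcal{Z}$ entirely to the reference, whereas you sketch the relevant weight computation and the role of the condition $\eta<\tfrac12\log(z_s/z)$.
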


\begin{proof}
    The proof follows the one of \cite[Theorem 3.5]{canizo_exponential_2013}, to that end we apply \cite[Theorem 3.1]{canizo_exponential_2013}.
    The main modification is to add the operator $P$ in the regularizing part of the decomposition of $\Lambda$. We mean that an operator 
    $\mathcal{A}$ is regularizing if for all $h\in \mathcal{Z}$, $\mathcal{A}h \in \mathcal{H}$. We just have to show that this is true 
    for our operator $P$. Let $h\in \mathcal{Z}$, we have
    \begin{align*}
        \|P(h)\|_{\mathcal{H}}^2 &\leq \frac{4b^2}{z}\left(\sum_{k=1}^{+\infty}b_kQ_kz^kh_k\right)^2 \\
        &\leq \frac{4b^2}{z}K(\beta,\eta)\|h\|_\mathcal{Z}^2.
    \end{align*}
    Then, the proof is straightforward following the proof of \cite[Theorem 3.5]{canizo_exponential_2013}. We obtain that our operator 
    has a spectral gap $\mu_\ast \in (0,\mu_1)$.
\end{proof}

\subsubsection{Proof of Theorem \ref{thm local expo convergence}}

As stated before, we can obtain an estimation on the quadratic term in a weighted $\ell^1$ space. Indeed, Cañizo and Lods 
\cite[Proposition 3.2]{canizo_exponential_2013} proved an estimation on the quadratic term $\Gamma$, it reads as follows:
\begin{prop}
    Let $\eta \in (0,\frac{1}{2}\log(\frac{z_s}{z}))$. There is a constant $K>0$, depending only on $(a_i)_{i\geq 1}$, $(b_i)_{i\geq 1}$ 
    and $z$, such that
    \begin{equation}
        \|\Gamma(h,h)\|_{\mathcal{Z}} \leq K\|h\|_{\mathcal{Z}}\|h\|_{\mathcal{Z}_\mathcal{D}},~\forall h\in \mathcal{Z}_\mathcal{D}.
    \end{equation}
    \label{prop estimation Gamma}
\end{prop}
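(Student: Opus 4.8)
The plan is to observe first that $\Gamma$ is nothing but the quadratic operator already analysed by Cañizo and Lods. Indeed, the coagulation term $a_iC_1C_i$ in (\ref{systeme equations BD-Depoly}) is identical to the one in the classical Becker--Döring equations --- the injection, the irreversible dimer loss and the replacement of $(2)\to(1)$ by $(2)\to\emptyset$ only alter linear or source terms --- so the symmetrised bilinear form in the excerpt coincides with theirs, built over the detailed-balance profile $\Theta_i=Q_iz^i$. Their \cite[Proposition 3.2]{canizo_exponential_2013} uses only the relation $a_iQ_i=b_{i+1}Q_{i+1}$, the normalisation $\Theta_1=z$, hypothesis \ref{borne a_i sous lineaire et positivite b_i} and the fact that $z\in(0,z_s)$; all of these hold here with our $z$ given by (\ref{definition z sous-critique}). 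Hence the estimate follows verbatim. For completeness I would include the short direct argument below.

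First I would pass to the strong form of $\Gamma(h,h)$. Collecting in the weak form the contributions attached to $\varphi_1$, $-\varphi_i$ and $\varphi_{i+1}$, and using $a_{i-1}\Theta_{i-1}=b_i\Theta_i/z$ (which is just $a_{i-1}Q_{i-1}=b_iQ_i$) together with $\Theta_1=z$, one obtains
\begin{equation*}
    \Gamma_1(h,h)=-a_1zh_1^2-h_1\sum_{i=1}^{+\infty}a_i\Theta_ih_i, \qquad \Gamma_k(h,h)=h_1\bigl(b_kh_{k-1}-a_kzh_k\bigr)\quad(k\geq2).
\end{equation*}
The common factor $h_1$ can be pulled out of $\|\Gamma(h,h)\|_{\mathcal{Z}}=\sum_{k\geq1}e^{\eta k}\Theta_k|\Gamma_k(h,h)|$ and estimated by $|h_1|\leq e^{-\eta}z^{-1}\|h\|_{\mathcal{Z}}$, which follows from keeping only the $k=1$ term in the series defining $\|h\|_{\mathcal{Z}}$.

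It then remains to bound $\sum_{k}e^{\eta k}\Theta_k\bigl(b_k|h_{k-1}|+a_kz|h_k|\bigr)$ by $\|h\|_{\mathcal{Z}_\mathcal{D}}$ up to a constant. The one algebraic identity needed is $\Theta_kb_k=z\,a_{k-1}\Theta_{k-1}$ (detailed balance again), which turns $\sum_{k\geq2}e^{\eta k}\Theta_kb_k|h_{k-1}|$ into $z e^{\eta}\sum_{j\geq1}e^{\eta j}a_j\Theta_j|h_j|$; since $\sigma_j\geq a_jz$ for $j\geq2$ and $\sigma_1\geq3a_1z$ by (\ref{sigma_i}), we have $a_j\leq(1+\sigma_j)/z$ uniformly, so this sum is $\leq z^{-1}\|h\|_{\mathcal{Z}_\mathcal{D}}$. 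The term $\sum_k e^{\eta k}\Theta_ka_kz|h_k|$ is treated directly by the same inequality, and the two pieces of $\Gamma_1(h,h)$ are covered by these same two estimates. Combining everything gives $\|\Gamma(h,h)\|_{\mathcal{Z}}\leq K\|h\|_{\mathcal{Z}}\|h\|_{\mathcal{Z}_\mathcal{D}}$ with $K$ depending only on $z$, $(a_i)_{i\geq1}$ and $(b_i)_{i\geq1}$.

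I do not expect a genuine obstacle here: the proof is essentially bookkeeping of index shifts together with the elementary domination $a_j\lesssim1+\sigma_j$. The only point demanding mild care is the justification that all the reindexed series converge for $h\in\mathcal{Z}_\mathcal{D}$ --- this is precisely what the bound $a_j\lesssim1+\sigma_j$ and the definition of $\mathcal{Z}_\mathcal{D}$ supply --- and the standing restriction $\eta<\tfrac12\log(z_s/z)$, which is what makes $\mathcal{Z}$, $\mathcal{Z}_\mathcal{D}$ and the comparison with $\mathcal{H}$ meaningful in the first place.
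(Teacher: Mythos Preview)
Your proposal is correct and matches the paper's approach: the paper does not prove this proposition at all but simply cites \cite[Proposition~3.2]{canizo_exponential_2013}, exactly as in your first paragraph. The self-contained argument you append --- passing to the strong form of $\Gamma$, extracting $|h_1|\leq e^{-\eta}z^{-1}\|h\|_{\mathcal{Z}}$, and closing with the domination $a_j\leq(1+\sigma_j)/z$ from (\ref{sigma_i}) --- is correct and goes beyond what the paper provides.
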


Let $\eta \in (0,\min\{\nu,\frac{1}{2}\log\frac{z_s}{z}\})$, where $\nu$ comes from (\ref{UIT bound exponential moment init}). 
The fluctuation $h$ satisfies (\ref{equilibrium sub-critical ODE fluctuaction}) in $\mathcal{Z}$, namely
\[ \frac{d}{dt}h = \Lambda h + \Gamma(h,h).\]
Therefore, if $(V_t)_{t\geq 0}$ denotes the semigroup generates by $\Lambda$, we have
\[ h(t) = V_th(0) + \int_0^tV_{t-s}\Gamma(h(s),h(s))ds. \]
Using \textbf{Proposition \ref{prop estimation Gamma}} and \textbf{Theorem \ref{thm semigroup extension gap}}, we have for some constant $C>0$,
\begin{align*}
    \|h(t)\|_{\mathcal{Z}} &\leq \|V_th(0)\|_{\mathcal{Z}} + \int_0^t\|V_{t-s}\Gamma(h(s),h(s))\|_{\mathcal{Z}} ds \\
    &\leq K\|h(0)\|_{\mathcal{Z}}\exp(-\mu_\ast t) + K\exp(-\mu_\ast t)\int_0^t \exp(\mu_\ast s)\|\Gamma(h(s),h(s))\|_{\mathcal{Z}} ds \\
    &\leq K\|h(0)\|_{\mathcal{Z}}\exp(-\mu_\ast t) + K\exp(-\mu_\ast t)\int_0^t \exp(\mu_\ast s)\|h(s)\|_{\mathcal{Z}}\|h(s)\|_{\mathcal{Z}_{\mathcal{D}}} ds,
\end{align*}
For any $\upsilon \in (0,\nu-\eta)$, using Cauchy-Schwarz's inequality we have
\begin{align*}
    \|h(t)\|_{\mathcal{Z}_\mathcal{D}} &= \sum_{i=1}^{+\infty} |h_i(t)|i\Theta_i\exp(\eta i) \\
    &\leq \left(\sum_{i=1}^{+\infty} |h_i(t)|i\Theta_i\exp(\eta i-\upsilon i)\right)^{\frac{1}{2}}\left(\sum_{i=1}^{+\infty}|h_i(t)|i\Theta_i\exp(\eta i+\upsilon i)\right)^{\frac{1}{2}}.
\end{align*}
However, $i\exp(-\upsilon i) \leq K$, and $i\exp(\eta i +\upsilon i) \leq K\exp(\nu i)$, for all $i\geq 1$. Therefore, we deduce from 
(\ref{UIT bound exponential moment}) that
\[\|h(t)\|_{{\mathcal{Z}}_\mathcal{D}} \leq K \|h(t)\|_{\mathcal{Z}}^{\frac{1}{2}} \left(\sum_{i=1}^{+\infty}|h_i(t)|\Theta_i\exp(\nu i)\right)^{\frac{1}{2}} \leq K \Xi_1^\frac{1}{2}\|h(t)\|_{\mathcal{Z}}^{\frac{1}{2}}, ~\forall t\geq 0.\]
All-in-one, we have
\begin{equation*}
    \|h(t)\|_{\mathcal{Z}} \leq K\|h(0)\|_X\exp(-\mu_\ast t) + K\exp(-\mu_\ast t)\int_0^t \exp(\mu_\ast s)\|h(s)\|_{\mathcal{Z}}^\frac{3}{2} ds.
\end{equation*}
Let $u(t) = \|h(t)\|_{\mathcal{Z}}\exp(\mu_\ast t)$, we have
\begin{align*}
    u(t) &\leq K\|h(0)\|_{\mathcal{Z}}  + K\int_0^t u(s)\|h(s)\|_{\mathcal{Z}}^\frac{1}{2} ds \\
    &\leq K\varepsilon  + K\int_0^t \exp\left(-\frac{\mu_\ast}{2}s\right)u(s)^\frac{3}{2} ds.
\end{align*}
Using Grönwall's lemma, we have for some $K>0$ and $\varepsilon$ small enough,
\begin{equation*}
    u(t) \leq K.
\end{equation*}
This means that 
\begin{equation*}
    \|h(t)\|_{\mathcal{Z}} \leq K\exp(-\mu_\ast t), ~\forall t\geq 0,
\end{equation*}
which is exactly what we want. Indeed, recalling the ansatz $C_i = \Theta_i(1+h_i)$, we have
\begin{equation*}
    \|h(t)\|_{\mathcal{Z}} = \sum_{i=1}^{+\infty} e^{\eta i}Q_iz^i|h_i(t)| = \sum_{i=1}^{+\infty}e^{\eta i}|C_i(t)-Q_iz^i|,
\end{equation*}
and therefore, the previous inequality is exactly (\ref{local expo convergence}).

\section{Numerical simulations} \label{section : numerical simualations}

\subsection{Numerical scheme}

In this section we describe a numerical scheme used for the conservative truncation (\ref{systeme equations BD-Depoly tronc conserv}). 
At the end, we will make a few comments about the non-conservative truncation. \newline

Let $n\geq 2$ an integer. Recalling the PDE (\ref{PDE type focker planck}) and (\ref{PDE flux J}) it remains to add the boundary 
conditions. 
As we are considering the conservative truncation, we have on the right side of our domain the following boundary condition:
\begin{equation}
    J(t,n) = 0.
    \label{PDE BC right}
\end{equation}
Now, on the left side of our domain, which is the production of monomers, we have the following ODE:
\begin{equation}
    \frac{d}{dt}C(t,1) = \lambda - \int_1^n a(x)C(t,1)C(t,x)dx - a(1)C(t,1)^2.
    \label{PDE BC left}
\end{equation}

\subsubsection{Verification of the choice of PDE}

We verify that with the right discretisation and the right mesh, we retrieve our system of equations (\ref{systeme equations BD-Depoly tronc conserv}). 
To do this, we use a uniform mesh with a size step $\Delta x = 1$, giving
\begin{equation*}
    C_i(t) \approx C(t,i), ~i=1,\ldots,n.
\end{equation*}
We discretize (\ref{PDE type focker planck}) using centre differences
\begin{align*}
    \frac{\partial J(t,x)}{\partial x}\bigg|_{x=x_i} &\approx \frac{J(t,x_{i+1/2})-J(t,x_{i-1/2})}{\Delta x},\\
    \frac{\partial}{\partial x}\left(\frac{C(t,x)}{Q(x)C(t,1)^x}\right)\bigg|_{x=x_{i+1/2}} &\approx \frac{1}{\Delta x}\left( \frac{C_{i+1}(t)}{Q_{i+1}C_1^{i+1}(t)} - \frac{C_i(t)}{Q_iC_1^i(t)}\right). \\
\end{align*} 
Therefore, since $\Delta x = 1$ and from (\ref{PDE flux J discretisation delta x = 1}), we obtain
\begin{equation*}
    \frac{\partial C(t,x)}{\partial t}\bigg|_{x=i} = -\frac{\partial J(t,x)}{\partial x}\bigg|_{x=i} \approx \frac{J_{i-1}(t)-J_i(t)}{\Delta x} = \frac{d}{dt}C_i(t). 
\end{equation*}
We just need to check the case $i=1$ which comes from the left boundary condition (\ref{PDE BC left}). Using the rectangle rule to approximate the integral, we have
\begin{equation*}
    \frac{d}{dt}C(t,1) = \lambda - \int_1^n a(x)C(t,1)C(t,x) dx - a(1)C(t,1)^2 \approx \lambda - \sum_{j=1}^{n-1} a_jC_1(t)C_j(t) - a_1C_1(t)^2 = \frac{d}{dt}C_1(t).
\end{equation*}
We have indeed find back our ODE (\ref{systeme equations BD-Depoly tronc conserv}) with a given scheme and a given mesh.

\subsubsection{Reaction-Diffusion (RD) scheme}

We now follow the idea in \cite{goudon_fokker-planck_2020} on a non-uniform mesh. Let $K>0$ and $\mathbb L$ the set of all the nodes in our non-uniform mesh:
\begin{equation*}
\mathbb{L} = \left\{ n_1,\ldots,n_K ~:~ n_j<n_{j+1},~ n_1=1,~n_K=n \right\}.
\end{equation*}
We then define the following size step:
\begin{equation*}
    \Delta x_{j+1/2} = n_{j+1} - n_j \text{ and } \Delta x_j = n_{j+1/2}-n_{j-1/2} \text{ where } n_{j+1/2} = \frac{n_{j+1}+n_j}{2} \text{ with }n_j,n_{j+1}\in\mathbb{L}.
\end{equation*}

Using finite volume scheme, we discretize (\ref{PDE type focker planck}) and (\ref{PDE flux J}) as follows:
\begin{equation}
    C_{n_j}^{k+1}-C_{n_j}^k = \frac{\Delta t}{\Delta x_j}\left(J_{n_{j-1/2}}^{k+1}-J_{n_{j+1/2}}^{k+1}\right),
    \label{PDE RD scheme non unif}
\end{equation}
with
\begin{equation}
    J_{n_{j-1/2}}^{k+1} = -a_{n_{j-1/2}}Q_{n_{j-1/2}}(C_1^k)^{n_{j-1/2}+1/2}\frac{1}{\Delta x_{j-1/2}}\left(\frac{C_{n_j}^{k+1}}{Q_{n_j}(C_1^k)^{n_j}}-\frac{C_{n_{j-1}}^{k+1}}{Q_{n_{j-1}}(C_1^k)^{n_{j-1}}}\right).
    \label{PDE RD scheme J non unif}
\end{equation}
At this point, it remains to choose what is $a_{n_{j-1/2}}$ and $Q_{n_{j-1/2}}$. To remain consistent with the discrete equations 
(i.e. when we take the step size equal to 1) we impose $a_{n_{j-1/2}} = a_{n_{j-1}}$ et $Q_{n_{j-1/2}} = Q_{n_{j-1}}$. 
We have tried to impose something else, and the numerical results were not satisfying.
Let $M_{n_j}^k = Q_{n_j}(C_1^k)^{n_j}$, we have
\begin{equation*}
    Q_{n_{j-1}}(C_1^k)^{n_{j-1}} = M_{n_{j-1}} = \sqrt{M_{n_{j-1}}M_{n_j}}\sqrt{\frac{M_{n_{j-1}}}{M_{n_j}}} 
    = \sqrt{M_{n_{j-1}}M_{n_j}}\sqrt{\prod_{k=n_{j-1}+1}^{n_j}\frac{b_k}{a_{k-1}}}\left(\sqrt{C_1^k}\right)^{-(n_j-n_{j-1})}.
\end{equation*}
Using the above equality, we can rewrite (\ref{PDE RD scheme non unif}), we obtain for all $2\leq j\leq K-1$,
\begin{multline*}
    C_{n_j}^{k+1}-C_{n_j}^k = \frac{\Delta t}{\Delta x_j}\sqrt{M_j^k}\left(\frac{a_{n_j}\sqrt{C_1^k}}{\Delta x_{j+1/2}}\sqrt{\prod_{k=n_{j}+1}^{n_{j+1}}\frac{b_k}{a_{k-1}}}\frac{C_{n_{j+1}}^{k+1}}{\sqrt{M_{n_j}^k}} + \frac{a_{n_{j-1}}\sqrt{C_1^k}}{\Delta x_{j-1/2}} \sqrt{\prod_{k=n_{j-1}+1}^{n_j}\frac{b_k}{a_{k-1}}}  \frac{C_{n_{j-1}}^k}{\sqrt{M_{n_{j-1}}^k}} \right. \\
    \left. - \left( \frac{a_{n_j}}{\Delta x_{j+1/2}}\left(\sqrt{C_1^k}\right)^{n_{j+1}-n_j+1} + \frac{a_{n_{j-1}}}{\Delta x_{j-1/2}} \left[ \prod_{k=n_{j-1}+1}^{n_j}\frac{b_k}{a_{k-1}}\right]\left(\sqrt{C_1^k}\right)^{1-(n_j-n_{j-1})} \right) \frac{C_{n_j}^{k+1}}{\sqrt{M_{n_j}^k}} \right).
\end{multline*}
Let $h_{n_j}^\ast := \dfrac{C_{n_j}^{k+1}}{\sqrt{M_{n_j}^k}}$ and $h_{n_j} = \dfrac{C_{n_j}^k}{\sqrt{M_{n_j}^k}}$, we then have for all $2\leq j \leq K-1$,
\begin{multline*}
    h_{n_j}^{\ast} = h_{n_j}^k + \frac{\Delta t}{\Delta x_j}\left(\frac{a_{n_j}}{\Delta x_{j+1/2}}\sqrt{\prod_{k=n_{j}+1}^{n_{j+1}}\frac{b_k}{a_{k-1}}}\sqrt{C_1^k}h_{n_{j+1}}^\ast + \frac{a_{n_{j-1}}}{\Delta x_{j-1/2}} \sqrt{\prod_{k=n_{j-1}+1}^{n_j}\frac{b_k}{a_{k-1}}} \sqrt{C_1^k} h_{n_{j-1}}^\ast \right. \\
    \left. - \left( \frac{a_{n_j}}{\Delta x_{j+1/2}}\left(\sqrt{C_1^k}\right)^{n_{j+1}-n_j+1} + \frac{a_{n_{j-1}}}{\Delta x_{j-1/2}} \left[ \prod_{k=n_{j-1}+1}^{n_j}\frac{b_k}{a_{k-1}}\right]\left(\sqrt{C_1^k}\right)^{1-(n_j-n_{j-1})}\right)h_{n_j}^\ast \right).
\end{multline*}
As mentioned in the beginning of this section, we present this scheme on the conservative truncation (\ref{systeme equations BD-Depoly tronc conserv}), 
thus we impose the zero flux condition on the right boundary, meaning $J_{n_{K+1/2}} = 0$. This condition may be rewritten as follows:
\begin{equation}
    \frac{C_{n_{K+1}}^{k+1}}{M_{n_{K+1}}^k} = \frac{C_{n_K}^{k+1}}{M_{n_K}^k},~~ \forall k>0.
    \label{PDE RD scheme non unif nul flux}
\end{equation}
Therefore, the scheme becomes
\begin{equation}
    \begin{aligned}
        C_1^{k+1} &= C_1^k + \Delta t \left( \lambda - C_1^{k+1}\left(\sum_{j=1}^{K-1} a_{n_j}\Delta x_{j+1/2}C_{n_j}^k+a_1C_1^k\right)\right) \\
        h_1^\ast &= \frac{1}{\sqrt{C_1^k}} \frac{C_1^k + \lambda \Delta t}{1+\Delta t\left(\sum\limits_{j=1}^{K-1} a_{n_j}\Delta x_{j+1/2}C_{n_j}^k+a_1C_1^k\right)} \\
        h_{n_j}^{\ast} &= h_{n_j}^k + \frac{\Delta t}{\Delta x_j}\left(\frac{a_{n_j}}{\Delta x_{j+1/2}}\sqrt{\prod_{k=n_{j}+1}^{n_{j+1}}\frac{b_k}{a_{k-1}}}\sqrt{C_1^k}h_{n_{j+1}}^\ast + \frac{a_{n_{j-1}}}{\Delta x_{j-1/2}} \sqrt{\prod_{k=n_{j-1}+1}^{n_j}\frac{b_k}{a_{k-1}}} \sqrt{C_1^k} h_{n_{j-1}}^\ast \right. \\
        & \left. - \left( \frac{a_{n_j}}{\Delta x_{j+1/2}}\left(\sqrt{C_1^k}\right)^{n_{j+1}-n_j+1} + \frac{a_{n_{j-1}}}{\Delta x_{j-1/2}} \left[ \prod_{k=n_{j-1}+1}^{n_j}\frac{b_k}{a_{k-1}}\right]\left(\sqrt{C_1^k}\right)^{1-(n_j-n_{j-1})}\right)h_{n_j}^\ast \right) \\
        h_{n_K}^\ast &= h_{n_K} + \frac{\Delta t}{\Delta x_K}\left( \frac{a_{n_{K-1}}}{\Delta x_{K-1/2}}\sqrt{\prod_{k=n_{K-1}+1}^{n_K} \frac{b_k}{a_{k-1}}}\sqrt{C_1^k}h_{n_{K-1}} \right. \\
        & \hspace{4.7cm} \left. - \frac{a_{n_{K-1}}}{\Delta x_{K-1/2}}\left[\prod_{k=n_{K-1}+1}^{n_K} \frac{b_k}{a_{k-1}} \right] \left(\sqrt{C_1^k}\right)^{1-(n_K-n_{K-1})}h_{n_K}  \right).
    \end{aligned}
    \label{PDE RD scheme h non unif full}
\end{equation}

Numerically to implement (\ref{PDE RD scheme h non unif full}), we solve the following linear system:
\begin{equation}
    \left(I_{K-1} - \Delta t\mathsf{W}^k\right)H^\ast = H + \beta^k,
    \label{PDE scheme RD system lineaire non unif}
\end{equation}
where
\begin{equation*}
H^\ast = 
\begin{pNiceMatrix}[nullify-dots,xdots/line-style=loosely dotted]
h_2^\ast \\ \Vdots \\ \Vdots \\ \Vdots \\ h_{n_K}^\ast
\end{pNiceMatrix}, ~~
H = 
\begin{pNiceMatrix}[nullify-dots,xdots/line-style=loosely dotted]
h_2 \\ \Vdots \\ \Vdots \\ \Vdots \\ h_{n_K}
\end{pNiceMatrix}, ~~
\beta^k = \frac{\Delta t}{\Delta x_2 \Delta x_{2-1/2}}a_{n_1}\sqrt{C_1^k}\sqrt{\prod_{k=n_1+1}^{n_2}\frac{b_k}{a_{k-1}}}
\begin{pNiceMatrix}[nullify-dots,xdots/line-style=loosely dotted]
h_1^\ast \\ 0 \\ \Vdots \\ \Vdots \\ \Vdots \\ 0
\end{pNiceMatrix},
\end{equation*}
and the tri-diagonal matrix
\begin{equation*}
\mathsf{W}^k =
\begin{pNiceMatrix}[nullify-dots,xdots/line-style=loosely dotted]
-\omega_{n_2}^k & \alpha_{n_{2+1/2}} & 0 & & \Cdots & 0 & 0\\
\gamma_{n_{3-1/2}} & -\omega_{n_3}^k & \alpha_{n_{3+1/2}}  & \Ddots & & \Vdots & \Vdots \\
0 & \gamma_{n_{4-1/2}}  & -\omega_{n_4}^k & \Ddots & & &\\
& \Ddots & \Ddots & \Ddots & & 0 &\\
\Vdots & & & & & \alpha_{n_{K-3/2}} & 0\\
0 & \Cdots & & 0 & \gamma_{n_{K-3/2}}& -\omega_{n_{K-1}}^k & \alpha_{n_{K-1/2}} \\
0 & \Cdots & &   & 0        & \gamma_{n_{K-1/2}} & -\nu_{n_K}^k
\end{pNiceMatrix},
\end{equation*}
with for $2\leq j\leq K-1$,
\begin{equation*}
    \omega_{n_j}^k = \frac{1}{\Delta x_j}\left(\frac{a_{n_{j}}}{\Delta x_{j+1/2}}\left(\sqrt{C_1^k}\right)^{n_{j+1}-n_j+1} + \frac{a_{n_{j-1}}}{\Delta x_{j-1/2}} \left[ \prod_{k=n_{j-1}+1}^{n_j}\frac{b_k}{a_{k-1}}\right]\left(\sqrt{C_1^k}\right)^{1-(n_j-n_{j-1})} \right),
\end{equation*}
\begin{equation*}
    \alpha_{n_{j+1/2}} = \frac{1}{\Delta x_j}\frac{a_{n_j}}{\Delta x_{j+1/2}}\sqrt{\prod_{k=n_{j}+1}^{n_{j+1}}\frac{b_k}{a_{k-1}}}\sqrt{C_1^k}, ~~  
    \gamma_{n_{j-1/2}} = \frac{1}{\Delta x_{j}}\frac{a_{n_{j-1}}}{\Delta x_{j-1/2}} \sqrt{\prod_{k=n_{j-1}+1}^{n_j}\frac{b_k}{a_{k-1}}} \sqrt{C_1^k},
\end{equation*}
and
\begin{equation*}
    \nu_{n_K}^k = \frac{1}{\Delta x_K}\frac{a_{n_{K-1}}}{\Delta x_{K-1/2}}\left[\prod_{k=n_{K-1}+1}^{n_K} \frac{b_k}{a_{k-1}} \right] \left(\sqrt{C_1^k}\right)^{1-(n_K-n_{K-1})}.
\end{equation*}

\begin{rmk}
    As opposed to uniform mesh, the matrix $\mathsf{W}^k$ is no longer symmetric, which can translate in 
    a loss of computation time. Therefore, to symmetrise this matrix while keeping the tri-diagonal form we can multiply the matrix by
    \begin{equation*}
        \mathsf{\Theta} = \diag\left(\frac{\Delta x_j}{\Delta x_1}\right)_{2\leq j\leq K}.
    \end{equation*}
\end{rmk}

The scheme on $h^\ast$ (\ref{PDE RD scheme h non unif full}) is unconditionally stable in time and size. Indeed, the only restriction 
could be on the time step because of Euler’s explicit-implicit scheme for $C_1$, but this scheme is also unconditionally stable. 
We have
\begin{equation*}
    C_1^{k+1}\left(1+\Delta t\left(a_1C_1^k+\sum_{j=1}^{K-1}a_{n_j}\Delta x_{j+1/2}C_{n_j}^k\right)\right) = C_1^k + \Delta t \lambda
\end{equation*}
thus
\begin{equation*}
    C_1^{k+1} \leq C_1^k + \Delta t \lambda,
\end{equation*}
which leads to, for $0\leq t^k \leq T$,
\begin{equation*}
    C_1^{k} \leq C_1^0 + \lambda t^k \leq C_1^0 + \lambda T.
\end{equation*}

\begin{rmk}
    We could just do an Euler explicit scheme for (\ref{PDE BC left}), but then we would have two conditions 
    on the time step $\Delta t$: stability and to preserve positivity. This would lead to have an adaptative 
    time step $\Delta t^k$. \newline
\end{rmk}

We now have a well-balanced scheme and unconditionally stable in time and size. However, there is a difference with the uniform 
mesh with $\Delta x = 1$ or with the ODEs which is the parameter $z$ of the steady-state. Indeed, equations (\ref{PDE RD scheme h non unif full})
for $j\geq 2$ implies that the steady-state is of the form $Q_{n_j}\eta^{n_j}$, and the parameter $\eta$ is determined with the 
equation on $C_1$. The difference comes from the following two equations 
\begin{equation}
    \lambda - \sum_{i=1}^{N-1} a_iQ_iz^{i+1} - a_1z^2 = 0 \text{ and } \lambda - \sum_{j=1}^{K-1} a_{n_j}\Delta x_{n_{j+1/2}}Q_{n_j}\tilde z^{n_j+1} - a_{n_1}\tilde z^2 = 0,
    \label{numerical equilibria different z}
\end{equation} 
which do not have the same solutions. In fact, the difference of the two solutions is of order $Q_{n_l}\tilde z^{n_l}$ where $l$ is 
defined as follows:
\[ l = \min \left\{ j \in \left\{1,\ldots,K-1\right\} ~:~ \Delta x_{j+1/2} > 1  \right\}. \]

A difference is expected since we reduce the linear system therefore we lost information which translate into a slight difference 
in the steady-state. We note that we can compute the steady-state on a uniform mesh with $\Delta x = 1$ and therefore choose 
the right mesh to control the error. Morally if we choose a mesh with $\Delta x = 1$ for small size and whatever we want for 
bigger size then we approach the right steady-state.

\begin{rmk}[\textbf{Non-conservative truncation}]
    Changing the right boundary condition (\ref{PDE RD scheme non unif nul flux}) 
    to one taking account things that can leave the system, we obtain the same scheme except for the bottom right term in the iteration 
    matrix. However, the scheme is no longer well-balanced since we still use the same form of steady-state. One modification could be to 
    use the steady-state of the non-conservative truncation and doing again the same scheme.
\end{rmk}

\subsection{Numerical results}

Before showing numerical results, we have to mention the fact that the conservative truncation (\ref{systeme equations BD-Depoly tronc conserv}) 
has the same steady-state as the infinite system, meaning $(Q_iz^i)_{i\geq 1}$, however the $z$ differs due to the truncation. In fact, 
the conservative truncation (\ref{systeme equations BD-Depoly tronc conserv}) has a unique steady-state, which is this one, no matter 
$\lambda$. There is no sub-critical or super-critical cases in any truncation, indeed if $z_s$ is finite then for a fixed $n$ and $\lambda$ 
there always exists $z(n)>z_s$ such that $(Q_iz(n)^i)_{1\leq i\leq n}$ is the only steady-state of (\ref{systeme equations BD-Depoly tronc conserv}).

Following the above discussion on $z$ for different meshes, we denote $z_{RK4} = z_{RDU}$ the parameter $z$ obtain with the full 
sum, meaning with $\Delta x = 1$, and we denote $z_{RDNU}$ the parameter $z$ obtain with a non-uniform mesh. Consequently, we denote 
$C^{eq}_{RK4} = C^{eq}_{RDU}$ and $C^{eq}_{RDNU}$ the corresponding steady-state. \newline

In every numerical simulation used to obtain graphs below, we start with a system containing nothing:
\[ C_i(0) = 0, ~\text{for } i = 1,\ldots,n. \]
For the size discretisation we use a non-uniform mesh with $\Delta x = 1$ close to the boundaries and then nodes are spaced evenly 
on a log scale until the step size reaches the maximum size that we impose. \newline

We start with the convergence towards the steady-state of our numerical solution. \textsc{Figure} \ref{Figure cvg eq} shows results for a 
system of size $n = 30 000$ with kinetics coefficients and production rate as follows:
\begin{equation}
    a_i = i^{1/2}, ~b_i = i^{2/3} \text{ and } \lambda = 10.
    \label{numerical parameters SET2}
\end{equation}
These parameters gives $z_s = +\infty$ and therefore $\lambda_s = +\infty$, and \texttt{z $\approx$ 1.119}. Using our coarse-grain 
mesh, we have $|\texttt{z}_{RDU}-\texttt{z}_{RDNU}| \leq \texttt{1.8e-8}$. %%1.1186750502727731

\textsc{Figure} \ref{Figure cvg eq norme inf} shows that the numerical solution of our scheme convergences towards the steady-state 
$(Q_iz_{RDNU}^i)_{1\leq i\leq n}$ at the given precision, and the green curve show what we expected which is that the two equilibria 
differ at the expected order of magnitude. Focusing on the red and blue curve, our scheme preserves the exponential decay towards 
the steady-state. 
\textsc{Figure} \ref{Figure cvg eq erreur relative} shows that the relative error between the numerical solution obtained from 
our scheme and the one obtained with RK4 scheme (Runge-Kutta method) converge towards the relative error between the different equilibria, which is 
expected from \textsc{Figure} \ref{Figure cvg eq erreur relative} and (\ref{numerical equilibria different z}). However, unlike 
the scheme of Duncan and Soheili \cite[Fig. 3]{duncan_approximating_2001}, the relative error does not behave similarly. Indeed, their 
relative error is close to 0 for small time but remains around $10^{-2}$ when the solution reaches the steady-state, whereas
our relative error is bigger for small time but decrease towards the expected error (the sub-sampling error from 
(\ref{numerical equilibria different z})) as the solution reaches the steady-state.
% but still stay small.

\begin{figure}[ht]
    \centering    
    \begin{subfigure}[c]{0.49\textwidth}
        \includegraphics[width=\textwidth]{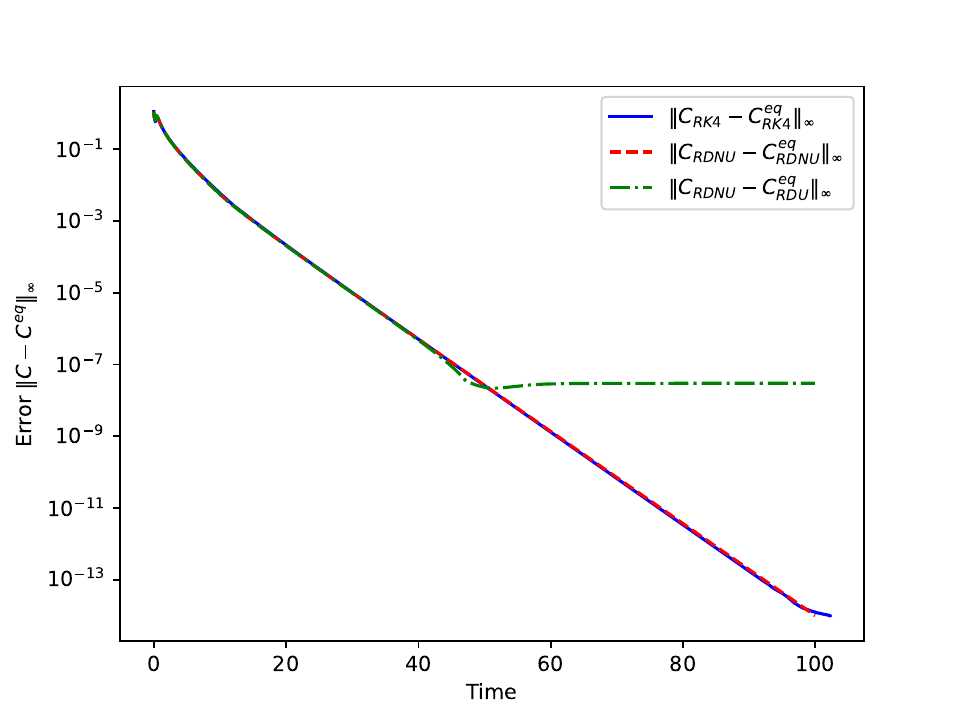}
        \caption{Absolute error}
        \label{Figure cvg eq norme inf}
    \end{subfigure}
    \begin{subfigure}[c]{0.49\textwidth}
        \includegraphics[width=\textwidth]{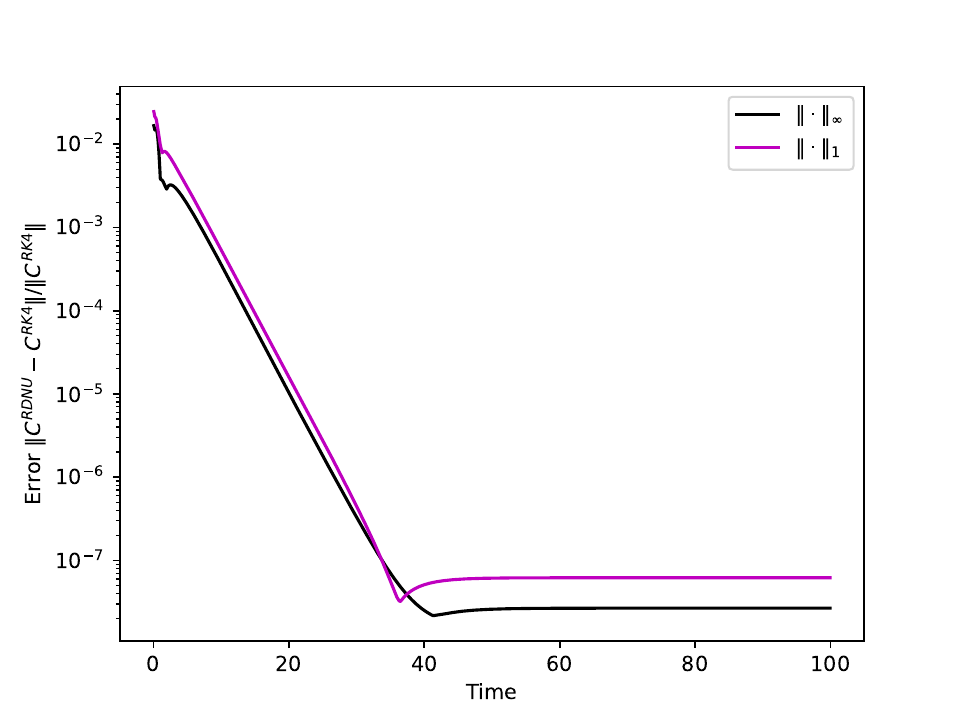}
        \caption{Relative error}
        \label{Figure cvg eq erreur relative}
    \end{subfigure}
    \caption{Comparison of the convergence towards the steady-state. System size $K = 649$ with maximum step size $\Delta x_{max} = 50$.
    The relative error is only taken from the nodes on the non-uniform mesh, no interpolation is used.}
    \label{Figure cvg eq}
\end{figure}

We now compare computation time, since for the super-critical case we may need to compute very large size for a very long time. 
We therefore want a scheme that is able to reduce the computation time. In our scheme, we take $\Delta x_{max}$ such that it 
represents at most 0.05\% of the truncation size $n$. \textsc{Figure} \ref{Figure computation time} shows that, for large enough 
truncation size $n$, it is efficient to use our scheme.

\textsc{Figure} \ref{Figure computation time set2} shows results with kinetics coefficients and production rate from (\ref{numerical parameters SET2}) 
and \textsc{Figure} \ref{Figure computation time set3} shows results with kinetics coefficients and production rate as follows:
\begin{equation}
    a_i = i^{1/2}, ~b_i = 0.1+0.75i^{1/2} \text{ and } \lambda = 1.
    \label{numerical parameters SET3}
\end{equation}
These parameters gives $z_s = 0.75$, \texttt{$\lambda_s$ $\approx$ 5.34375} and \texttt{z $\approx$ 0.516}. %%0.5150679356089982 

We mention that with other coefficients (e.g. (\ref{numerical parameters SET4})), the convergence is much slower towards the steady-state and therefore the gain 
can be much better. Taking the maximum size $n=30000$ and allowing only $\Delta x_{max}$ to be roughly $0.05\%$ of $n$, we reduce 
the computational time by $65\%$. Taking $\Delta x_{max}$ to be roughly $0.5\%$ of $n$, the reduction becomes $80\%$. The gain of 
computational time also comes from the time size which is $10$ times bigger for our scheme.
\begin{equation}
    a_i = i^{1/2}, ~b_i = 0.05+0.1i^{2/3} \text{ and } \lambda = 10.
    \label{numerical parameters SET4}
\end{equation}

\begin{figure}[ht]
    \centering
    \begin{subfigure}[c]{0.49\textwidth}
        \includegraphics[width=\textwidth]{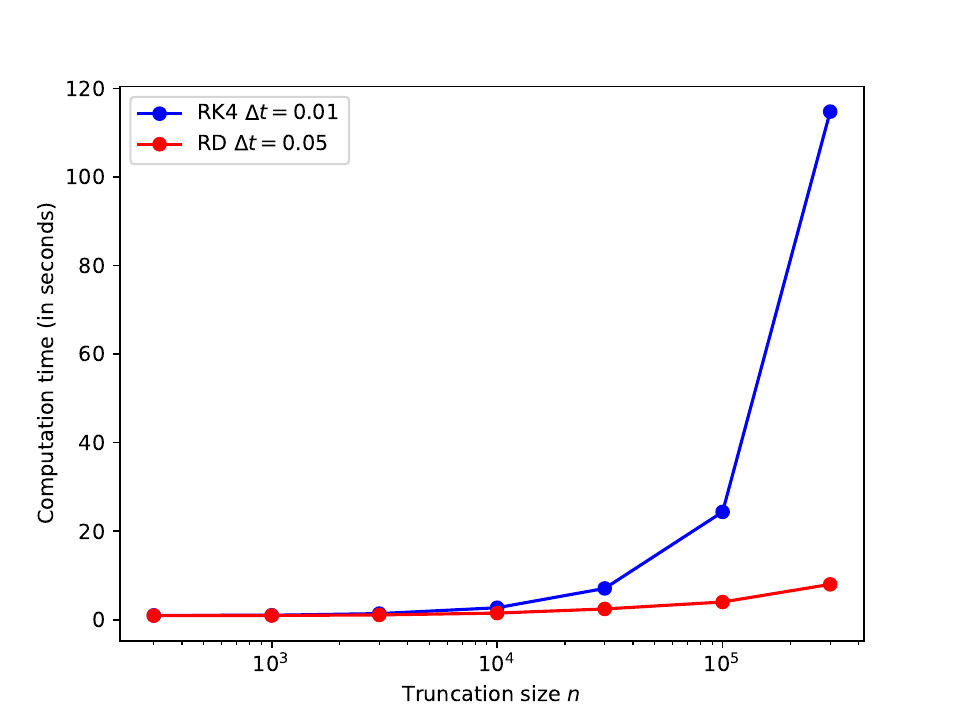}
        \subcaption{Parameters (\ref{numerical parameters SET2})}
        \label{Figure computation time set2}
    \end{subfigure}
    \begin{subfigure}[c]{0.49\textwidth}
        \includegraphics[width=\textwidth]{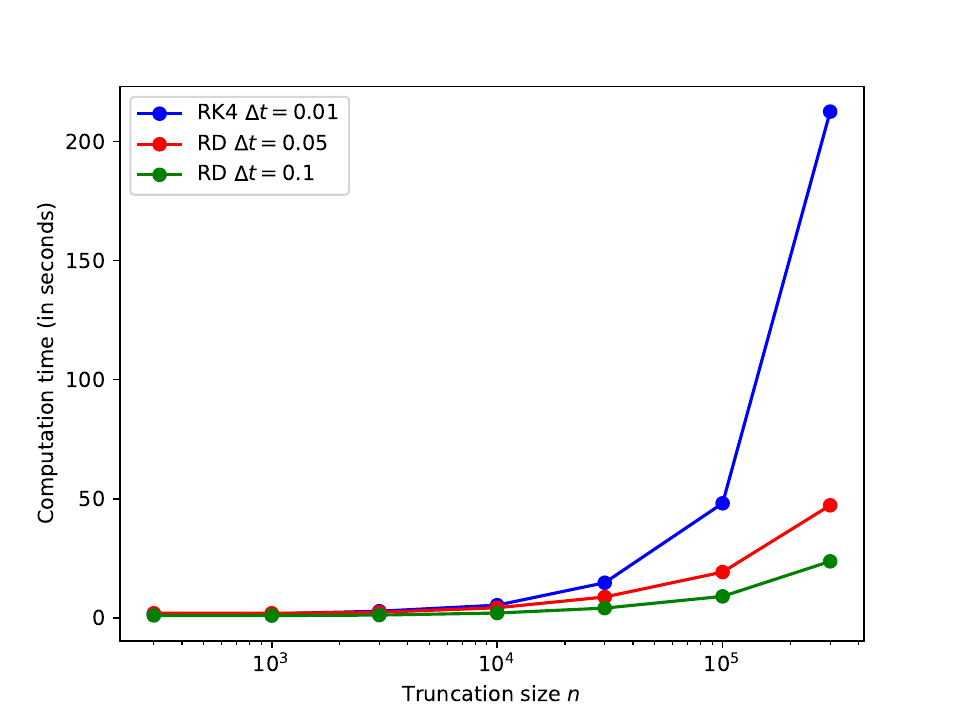}
        \subcaption{Parameters (\ref{numerical parameters SET3})}
        \label{Figure computation time set3}
    \end{subfigure}
    \caption{Comparison of computation time}
    \label{Figure computation time}
\end{figure}

We now compare the dynamics of our numerical solution. We know that the convergence towards the steady-state is the same and that 
for large truncation it is more efficient time wise. However, does the behaviour, quantitatively and qualitatively speaking, of 
our solution is the same or not? \textsc{Figure} \ref{Figure dynamical beaviour} shows that it globally conserves the dynamical 
behaviour of the solution with the same order of magnitude. We only show the first coordinate since the dynamical behaviour is led
by the latter. We note that in those simulations neither $\Delta t$ and $\Delta x_j$ are the same as in the RK4 scheme, which 
already create an error. 

\textsc{Figure} \ref{Figure dynamical beaviour} shows results with kinetics coefficients and production rate from (\ref{numerical parameters SET2}). 
\newline

\begin{figure}[ht]
    \centering
    \begin{subfigure}[c]{0.49\textwidth}
        \includegraphics[width=\textwidth]{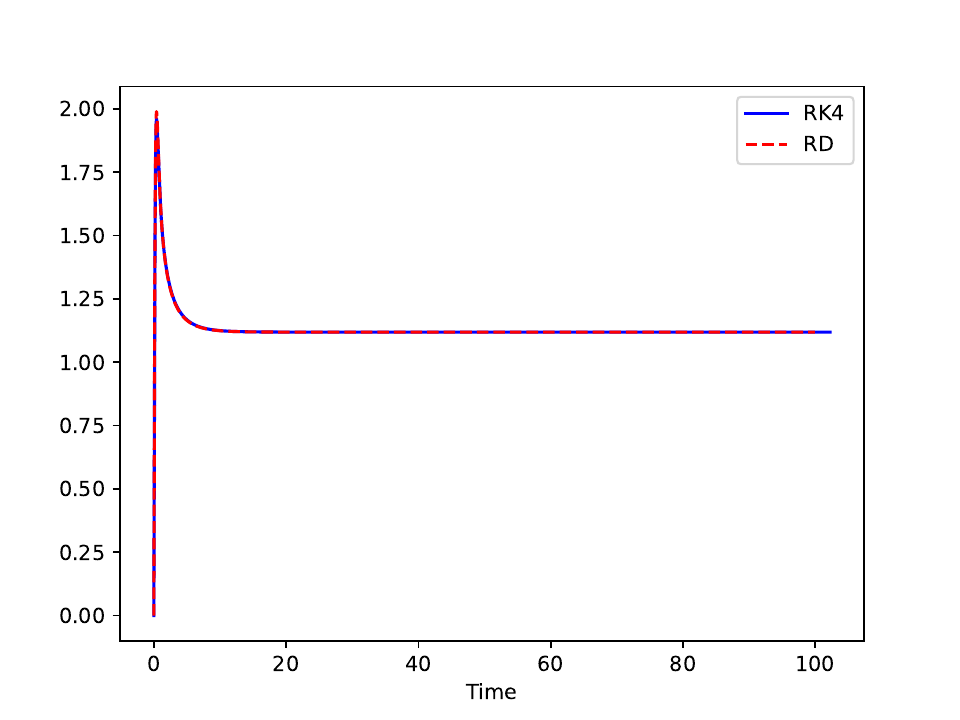}
        \label{Figure dynamical beaviour set2}
    \end{subfigure}
    \begin{subfigure}[c]{0.49\textwidth}
        \includegraphics[width=\textwidth]{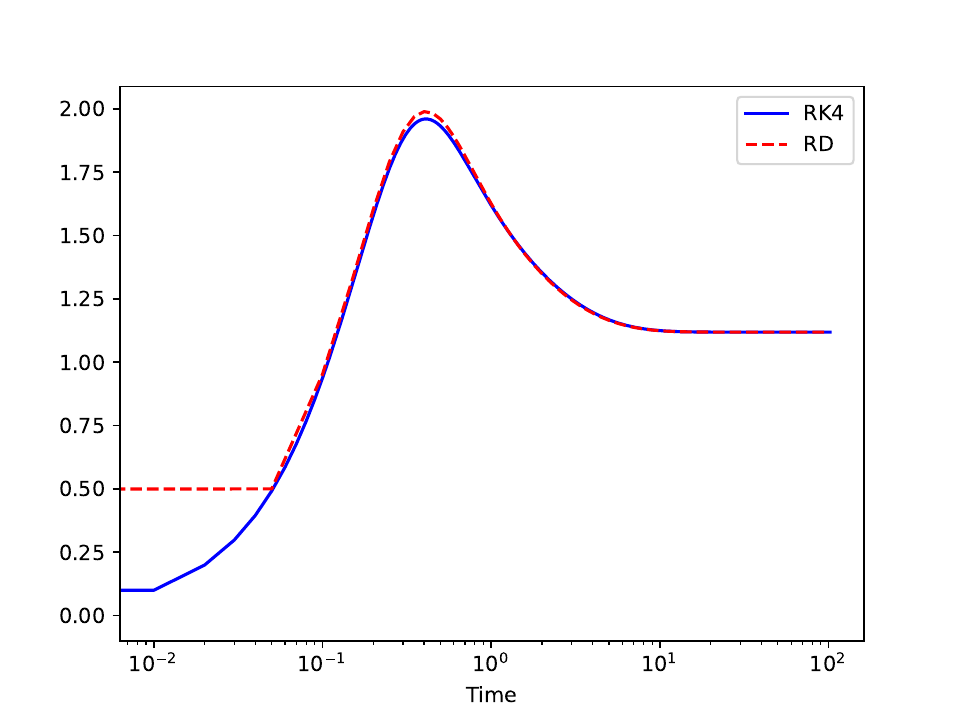}
        \label{Figure dynamical beaviour set2 zoom}
    \end{subfigure}
    \caption{Comparison of the dynamical behaviour of $C_1(t)$}
    \label{Figure dynamical beaviour}
\end{figure}

One more reason we want to develop a different scheme and not settle for classical ODEs schemes is for the super-critical case. 
Indeed, as stated before, either truncation have a unique steady-state, but they are not necessarily the one (or ones) of the 
infinite system and moreover they do not necessarily converge as $n$ goes to infinity to the one (or ones) of the infinite system. 
In practice, we observe that the numerical solution of the RK4 scheme (Runge-Kutta method) always converge towards the steady-state 
of the truncation even in the theoretical super-critical case.

We might expect a different behaviour of our scheme, leading to hints of what is happening in the super-critical case. There is 
no steady-state, but as in the Becker-Döring equations it still could converge towards $(Q_iz_s^i)_{i\geq 1}$ in some sense, 
even if it is not a steady-state unlike in the Becker-Döring equations. We then believe than our scheme may be able to catch a very 
slow convergence towards this sequence that a classical numerical scheme on the truncation system would definitely not catch.

\appendix
\section{Estimations of the sufficient condition}

\subsection{Constant coefficients} \label{annexe constant coeff}

We detail the computation to obtain \textbf{Example \ref{expl constant coeff}}. Recalling that $a_i=a$ and $b_i=b$ for all $i\geq 1$, we have for $z<z_s$,
$\frac{b}{a}=z_s$ and
\begin{align*}
    Q_i = \prod_{j=2}^{i}\frac{a_{j-1}}{b_j}=\prod_{j=2}^{i}\frac{a}{b} = \left(\frac{a}{b}\right)^{i-1} = \frac{1}{z_s^{i-1}}.
\end{align*}
Thus,
\begin{align*}
    \sum_{k=1}^{+\infty} Q_kz^k = z_s\sum_{k=1}^{+\infty} \left(\frac{z}{z_s}\right)^k = \frac{zz_s}{z_s-z},
\end{align*}
and
\begin{align*}
    \sum_{j=1}^k \frac{1}{a_jQ_jz^j} = \frac{1}{a} \sum_{j=1}^k \frac{z_s^{j-1}}{z^j} = \frac{1}{az_s}\frac{\frac{z}{z_s}-\left(\frac{z_s}{z}\right)^{k+1}}{1-\frac{z}{z_s}} = \frac{1}{a}\frac{1-\left(\frac{z_s}{z}\right)^k}{z-z_s},
\end{align*}
and
\begin{align*}
    \sum_{j=k+1}^{+\infty}Q_jz^j = z_s\sum_{j=k+1}^{+\infty}\left(\frac{z}{z_s}\right)^j = \frac{z}{1-\frac{z}{z_s}}-\frac{z-z_s\left(\frac{z}{z_s}\right)^{k+1}}{1-\frac{z}{z_s}} = \frac{z_s^2}{z_s-z}\left(\frac{z}{z_s}\right)^{k+1}.
\end{align*}
We then obtain 
\begin{align*}
    \left(\sum_{j=k+1}^{+\infty}Q_jz^j\right)\left(\sum_{j=1}^k \frac{1}{a_jQ_jz^j}\right) = -\frac{1}{a}\frac{z_s^2}{(z_s-z)^2}\left(\frac{z}{z_s}\right)^{k+1} + \frac{1}{a}\frac{zz_s}{(z_s-z)^2}.
\end{align*}
Therefore, the condition (\ref{equilibrium sub-critical condition P-norm spectral gap}) becomes
\begin{align*}
    \frac{8b}{\sqrt{z}}\sqrt{\frac{zz_s}{z_s-z}} \frac{1}{a}\frac{zz_s}{(z_s-z)^2} < 1,
\end{align*}
which is, after simplification,
\begin{align*}
    \frac{8zz_s^{5/2}}{(z_s-z)^{5/2}}<1.
\end{align*}

\subsection{Linear coefficients} \label{annexe linear coeff}

We detail the computation to obtain the \textbf{Example \ref{expl linear coeff}}. Recalling that $a_i=ai$ and $b_i=bi$ for all $i\geq 1$, we have for $z<z_s$,
$\frac{b}{a}=z_s$ and
\begin{align*}
    Q_i = \prod_{j=2}^{i}\frac{a_{j-1}}{b_j}=\prod_{j=2}^{i}\frac{a(j-1)}{bj} = \left(\frac{a}{b}\right)^{i-1}\prod_{j=2}^{i}\frac{j-1}{j} = \left(\frac{a}{b}\right)^{i-1}\frac{1}{i} = \frac{1}{iz_s^{i-1}}.
\end{align*}
Thus, 
\begin{align*}
    \sum_{i=1}^{+\infty}i^2Q_iz^i &= z\sum_{i=1}^{+\infty}i\left(\frac{z}{z_s}\right)^{i-1} = z\sum_{i=1}^{+\infty}\left[\left(\frac{z}{z_s}\right)^{i}\right]' \\
    &= z\left[\sum_{i=1}^{+\infty}\left(\frac{z}{z_s}\right)^{i}\right]' = z\left(\frac{\frac{z}{z_s}}{1-\frac{z}{z_s}}\right)' \\
    &=\frac{zz_s}{(z_s-z)^2},
\end{align*}
and
\begin{align*}
    \sum_{j=1}^k \frac{1}{a_jQ_jz^j} = \frac{1}{a} \sum_{j=1}^k \frac{z_s^{j-1}}{z^j} = \frac{1}{az_s}\frac{\frac{z}{z_s}-\left(\frac{z_s}{z}\right)^{k+1}}{1-\frac{z}{z_s}} = \frac{1}{a}\frac{1-\left(\frac{z_s}{z}\right)^k}{z-z_s},
\end{align*}
and
\begin{align*}
    \sum_{j=k+1}^{+\infty}Q_jz^j &= \frac{b}{a}\sum_{j=k+1}^{+\infty}\frac{1}{j}\left(\frac{z}{z_s}\right)^j =  z_s\sum_{j=k+1}^{+\infty}\int_0^z \left(\frac{x}{z_s}\right)^{j-1}dx \\
    &= z_s \int_0^z \frac{\left(\frac{x}{z_s}\right)^k}{1-\frac{x}{z_s}}dx. \\
\end{align*}
Therefore, the condition (\ref{equilibrium sub-critical condition P-norm spectral gap}) becomes
\begin{equation*}
    \frac{8z_s^{3/2}}{z_s-z}\sup_{k\geq 1}\left(\int_0^z \frac{\left(\frac{x}{z_s}\right)^k}{1-\frac{x}{z_s}}dx\right)\left(\frac{1-\left(\frac{z_s}{z}\right)^k}{z-z_s}\right) < 1.
\end{equation*}
However, for $z<\dfrac{z_s}{2}$ ($z$ small enough) we have $\dfrac{1}{z_s-z}<\dfrac{2}{z_s}$. Thus, for $k\geq 1$,
\begin{align*}
    \left(\int_0^z \frac{\left(\frac{x}{z_s}\right)^k}{1-\frac{x}{z_s}}dx\right)\left(\frac{1-\left(\frac{z_s}{z}\right)^k}{z-z_s}\right) &< \frac{4}{z_s^2}\int_0^z\left(\frac{x}{z_s}\right)^k\left[\left(\frac{z_s}{z}\right)^k-1\right] \\
    &< \frac{4}{z_s^2}\frac{z_s}{k+1}\left(\frac{z}{z_s}\right)^{k+1}\left[\left(\frac{z_s}{z}\right)^k-1\right] \\
    &< \frac{4}{z_s}\frac{1}{k+1}\frac{z}{z_s} \\
    &< \frac{4z}{z_s^2}.
\end{align*}
Therefore, the condition (\ref{equilibrium sub-critical condition P-norm spectral gap}) is true when
\begin{equation*}
    \frac{32z}{\sqrt{z_s}(z_s-z)}<1.
\end{equation*}

\section*{Acknowledgement}

The author would like to thank Erwan Hingant for many valuable discussions on the topics covered in this paper.

\printbibliography

\end{document}